\newcommand{\cA}{\mathcal{A}}
\newcommand{\cB}{\mathcal{B}}
\newcommand{\cC}{\mathcal{C}}
\newcommand{\cD}{\mathcal{D}}
\newcommand{\cE}{\mathcal{E}}
\newcommand{\cF}{\mathcal{F}}
\newcommand{\cK}{\mathcal{K}}
\newcommand{\cL}{\mathcal{L}}
\newcommand{\cM}{\mathcal{M}}
\newcommand{\cN}{\mathcal{N}}
\newcommand{\cP}{\mathcal{P}}
\newcommand{\cS}{\mathcal{S}}
\newcommand{\cX}{\mathcal{X}}
\newcommand{\cY}{\mathcal{Y}}
\newcommand{\fS}{\mathfrak{S}}
\newcommand{\bE}{\mathbb{E}}
\newcommand{\bL}{\mathbb{L}}
\newcommand{\bN}{\mathbb{N}}
\newcommand{\bQ}{\mathbb{Q}}
\newcommand{\bR}{\mathbb{R}}
\newcommand{\PR}{\mathbb{P}}
\newcommand{\bONE}{\mathbbm{1}}
\newcommand{\dd}{ \mathrm{d}}
\renewcommand{\epsilon}{\varepsilon}
\newcommand{\vn}[1]{\left| \! \left| #1\right| \! \right|} 
\newcommand{\tn}[1]{\left| \! \left| \! \left|#1\right| \! \right| \! \right|}
\newcommand{\ip}[2]{\langle #1,#2\rangle}
\numberwithin{equation}{section}
\newtheorem{theorem}{Theorem}[section]
\newtheorem{lemma}[theorem]{Lemma}
\newtheorem{proposition}[theorem]{Proposition}
\newtheorem{corollary}[theorem]{Corollary}
\theoremstyle{definition}
\newtheorem{definition}[theorem]{Definition}
\newtheorem{remark}[theorem]{Remark}
\newtheorem{example}[theorem]{Example}
\newtheorem{condition}[theorem]{Condition}
\begin{document}

\title{Large deviations of the trajectory of empirical distributions of Feller processes on locally compact spaces}

\author{
\renewcommand{\thefootnote}{\arabic{footnote}}
Richard C. Kraaij
\footnotemark[1]
}

\footnotetext[1]{
Fakultät für Mathematik, Ruhr-University of Bochum, Postfach 102148, 
44721 Bochum, Germany, E-mail: \texttt{richard.kraaij@rub.de}.
}

\maketitle

\begin{abstract} 
We study the large deviation behaviour of the trajectories of empirical distributions of independent copies of time-homogeneous Feller processes on locally compact metric spaces. Under the condition that we can find a suitable core for the generator of the Feller process, we are able to define a notion of absolutely continuous trajectories of measures in terms of some topology on this core. Also, we define a Hamiltonian in terms of the linear generator and a Lagrangian as its Legendre transform.

We prove the large deviation principle and show that the rate function can be decomposed as a rate function for the initial time and an integral over the Lagrangian, finite only for absolutely continuous trajectories of measures.

We apply this result for diffusion and Lévy processes on $\bR^d$, for pure jump processes with bounded jump kernel on arbitrary locally compact spaces and for discrete interacting particle systems.
For diffusion processes, the theorem partly extends the Dawson and Gärtner theorem for non-interacting copies in the sense that it only holds for time-homogeneous processes, but on the other hand it holds for processes with degenerate diffusion matrix.
\end{abstract}

{\bf Mathematics Subject Classifications (2010).} 60F10, 60J99, 93D30 (primary);
 (secondary)
 
{\bf{Key words.} Freidlin-Wentzell theory; Hamilton equations; Lyapunov functions; entropic interpolations}



\section{Introduction}

Dawson and Gärtner \cite{DG87} proved the large deviation principle for the trajectory of empirical distributions of weakly interacting copies of diffusion processes. Additionally, they proved that the rate function can be decomposed as an entropy term for the large deviations at time zero and an integral over a quadratic Lagrangian, depending on position and speed. Recently, new proofs have been given using various methods and based on varying assumptions in \cite{BDF12,FK06,FaMa14}.

Similar results for Markov jump-processes has been given in \cite{DjKa95,DuRaWu16,Fe94a,Fe94b,Kr16b,Le95}. Additionally, \cite{MNW08} study the large deviations of trajectories of the empirical distributions together with the empirical flow of a finite state space Markov jump process and give a Lagrangian form of the rate function.

These two sets of results raise the question whether a context independent approach is possible to prove large deviations, in the space $D_{\cP(E)}(\bR^+)$, the Skorokhod space of $\cP(E)$ valued trajectories, for trajectories of weakly interacting, or even independent copies of processes on some space $E$ with a rate function of `Lagrangian' form:
\begin{equation} \label{eqn:intro_rate_function}
I(\nu) := \begin{cases}
I_0(\nu(0)) + \int_0^\infty \cL(\nu(s),\dot{\nu}(s))\dd s & \text{if } \nu \text{ is absolutely continuous}, \\
\infty & \text{otherwise}.
\end{cases} 
\end{equation}
For independent copies of processes, a large deviation principle(LDP) for the empirical averages can be obtained via Sanov's theorem and the contraction principle. Thus, the main goal of this paper is to rewrite this contracted rate-function in the form \eqref{eqn:intro_rate_function} in a unified way that allows for a large class of state spaces and processes including e.g. Lévy processes or independent copies of whole interacting particle systems\cite{Li85}. 

\smallskip

The results in this paper should be compared to large deviation principle, and the representation of the rate function, for the empirical process $n^{-1} \sum_{i \leq n} \delta_{X^i}$ on the space $\cP(D_E(\bR^+))$ in \cite{QuReVa99,Se16}. In the first paper, the $X^i$ are particles on a discrete lattice interacting via an exclusion rule and in the second paper the $X^i$ are Brownian particles with local interaction. In both cases, it is shown that the rate function has a conditional structure, composed of two parts. The first part is the rate function for the trajectory of empirical measures, in Lagrangian form, as described above. The second part is the path-space relative entropy of the measure with respect to a specifically tilted Markov process that has the correct marginals. In \cite{Se16}, the question is raised whether such results are equally robust, but this question goes beyond the results in this paper, both in terms of the interaction as in the space for which the LDP is stated.

\smallskip

To give a uniform proof of \eqref{eqn:intro_rate_function}, one cannot use any explicit structure of the underlying process, so we use the functional analytic structure underlying the Girsanov transformation that has been extensively studied in \cite{FK06}. Compared to \cite{FK06}, the focus of this paper is different. The independence assumption implies that the large deviation principle can be proven via Sanov's theorem and the contraction principle. Therefore, the main problem that is being addressed in this paper is the expression of the rate function in a Lagrangian form.

To obtain this Lagrangian form, we study the non-linear semigroup $\{V(t)\}_{t \geq 0}$ on $C_0(E)$ of conditional log Laplace transforms, defined by $V(t)f = \log S(t)e^f$, where $S(t)$ is the linear transition semigroup of the Feller process on a locally compact Polish space $E$. The main technical step in this paper is to show that the lift of the semigroup $V(t)$  to $C(\cP(E))$ equals a variational Nisio semigroup $\mathbf{V}(t)$:
\begin{equation*}
\mathbf{V}(t) G(\mu) = \sup_{\nu \in \cA \cC_\mu} \left\{G(\nu(t)) - \int_0^t \cL(\nu(s),\dot{\nu}(s)) \dd s \right\},
\end{equation*}
where $G \in C(\cP(E))$ and $\cA\cC_\mu$ is the space of `absolutely continuous' $\cP(E)$-valued trajectories that start in $\mu$. The definition of the Nisio-semigroup poses us with two problems. First, we need a way to define absolutely continuous trajectories of measures, and secondly, we need a way to define a Lagrangian. To this end, we assume the existence of a suitable topology on a core of the generator $(A,\cD(A))$ of the Feller process. This topology can then be used to define absolute continuity and the Lagrangian can be defined as the Legendre transform of $H$, $Hf = e^{-f}Ae^f$, with respect to the duality of $D$ with $D'$. The equality of $V(t)$ and $\mathbf{V}(t)$ is proven using resolvent approximation arguments and Doob-transform techniques.

\smallskip

The paper is organised as follows. We start out in Section \ref{sect:results} with the preliminaries and state the two main results. Theorem \ref{The:LDP1} gives, under the condition that the processes solves the martingale problem, the large deviation principle. Under the condition that there exists a suitable core for the generator of the process, Theorem \ref{The:LDP2} gives the decomposition of the rate function.

In Section \ref{section:control_theory_approach}, we study functional analytic properties of the generator, its non-linear counterpart $H$ and the Lagrangian $\cL$. Additionally, we show that $H$ is intimately related to the Girsanov transforms of the Markov process with generator $A$. In Section \ref{section:proof_of_ldp2}, we prove Theorem \ref{The:LDP2}. In particular, we introduce the Nisio semigroup $\mathbf{V}(t)$ in terms of absolutely continuous trajectories and the Lagrangian, and show that it equals the non-linear semigroup $V(t)$.

In Section \ref{section:examples}, we give four examples where Theorem \ref{The:LDP2} applies. We start with diffusion processes. After that, we consider Lévy processes and Markov jump processes. Finally, we check the conditions for spatially extended interacting particle systems of the type that are found in Liggett \cite{Li85}.

\section{Preliminaries and main results} \label{sect:results}

We start with some notation. Let $(E,d)$ be a complete separable metric space with Borel $\sigma$-algebra $\cE$. $\cM(E)$ is the set of Borel measures of bounded total variation on $E$ be equipped with the weak topology and $\cP(E)$ is the subset of probability measures. We denote with $D_E(\bR^+)$ the Skorokhod space of $E$ valued c\`{a}dl\`{a}g paths\cite[Section 3.5]{EK86}, $\bR^+ = \left[\left.0,\infty\right)\right.$. We write $\ip{f}{\mu}$ for the integral of $f \in C_b(E)$ with respect to $\mu \in \cM(E)$.

We define the relative entropy $H(\mu \, | \, \nu)$ of $\mu$ with respect to $\nu$ by
\begin{equation} \label{definition:relative_entropy}
H(\mu \, | \, \nu) = 
\begin{cases}
\int \log \frac{\dd \mu}{\dd \nu} \dd \mu & \text{if } \mu << \nu \\
\infty & \text{otherwise}.
\end{cases}
\end{equation}
On $E$, we have a time-homogeneous Markov process $\left\{X(t)\right\}_{t \geq 0}$ given by a path space measure $\PR$ on $D_E(\bR^+)$. Let $X^1, X^2, \dots$ be independent copies of $X$ and let $P$ the measure that governs these processes. We look at behaviour of the sequence $L_n := \left\{L_n^{X(t)}\right\}_{t \geq 0}$, 
\begin{equation*}
L_n^{X(t)} := \frac{1}{n} \sum_{i=1}^n \delta_{\{X^i(t)\}},
\end{equation*} 
under the law $P$. $L_n$ takes values in $D_{\cP(E)}(\bR^+)$, the Skorokhod space of paths taking values in $\cP(E)$. We also consider $C_{\cP(E)}(\bR^+)$ the space of continuous paths on $\cP(E)$ with the topology inherited from $D_{\cP(E)}(\bR^+)$.

\smallskip

We say that $L_n$ satisfies the large deviation principle(LDP) on $D_{\cP(E)}(\bR^+)$ with lower semi-continuous rate function $I : D_{\cP(E)}(\bR^+) \rightarrow [0,\infty]$ if for every open set $A$
\begin{equation*}
\liminf_{n \rightarrow \infty} \frac{1}{n} \log P[L_n \in A] \geq - \inf_{\mu \in A} I(\mu)
\end{equation*}
and for every closed set $B$
\begin{equation*}
\limsup_{n \rightarrow \infty} \frac{1}{n} \log P[L_n \in B] \leq - \inf_{\mu \in B} I(\mu).
\end{equation*}
$I$ is called good if its level sets $\{I\leq c\}$ are compact.

\smallskip

Suppose that $A : \cD(A) \subseteq C_b(E) \rightarrow C_b(E)$, is a linear operator with a domain that separates points: for every $x,y \in E$, there exists a $f$ in this set such that $f(x) \neq f(y)$. We say that $X$ solves the martingale problem for $(A,\cD(A))$ with starting measure $\PR_0$, if $\PR_0$ is the law of $X(0)$ and if for every $f \in \mathcal{D}(A)$, 
\begin{equation*}
f(X(t)) - f(X(0)) - \int_0^t Af(X(s)) \dd s
\end{equation*}
is a martingale for the natural filtration $\{\cF_t\}_{t \geq 0}$ given by $\cF_t = \sigma(X(s) \, | \, s \leq t)$.

In Appendix \ref{section:LDP_on_path_space}, we obtain the following preliminary result.

\begin{theorem} \label{The:LDP1}
Let $X$, represented by the measure $\PR$ on $D_E(\bR^+)$ solve the martingale problem for $(A,\cD(A))$ with starting measure $\PR_0$. Then, the sequence $L_n$ satisfies the large deviation principle with good rate function $I$, which is given for $\nu = \left\{\nu(t)\right\}_{t \geq 0} \in D_{\cP(E)}(\bR^+)$ by
\begin{equation*}
I(\nu) = \begin{dcases}
H(\nu(0) \, | \, \PR_0) + \sup_{\{t_i\}} \sum_{i=1}^k I_{t_i- t_{i-1}}(\nu(t_i) \, | \, \nu(t_{i-1})) & \text{if }\nu \in C_{\mathcal{P}(E)}(\bR^+) \\
\infty & \text{otherwise},
\end{dcases}
\end{equation*} 
where $\{t_i\}$ is a finite sequence of times: $0 = t_0 < t_1 < \dots < t_k$. For $s \leq t$, we have $I_{t}(\nu_2 \, | \, \nu_1) := \sup_{f \in C_b(E)} \left\{ \ip{f}{\nu_2} - \ip{V(t) f}{\nu_1} \right\}$, where $V(t)f(x) := \log \bE\left[e^{f(X(t))} \, \middle| \, X(0) = x \right]$. 
\end{theorem} 

For further results, we introduce some additional notation. For a locally convex space $(\cX,\tau)$, we write $\cX'$ for its continuous dual space. For $x \in \cX$ and $x' \in \cX'$, we write $\ip{x}{x'} := x'(x) \in \bR$ for the natural pairing between $x$ and $x'$. For two locally convex spaces $\cX, \cY$ and a continuous linear operator $T : \cX \rightarrow \cY$, we write $T' : \cY' \rightarrow \cX'$ for the adjoint of $T$, which is uniquely defined by $\ip{x}{T'(y')} = \ip{Tx}{y'}$, see for example Treves \cite[Chapter 19]{Tr67}. For a neighbourhood $\cN$ of $0$ in $\cX$, we define the polar of $\cN^\circ \subset \cX'$ by
\begin{equation} \label{eqn:polar_def}
\cN^\circ := \left\{u \in \cX' \, \middle| \,|\ip{x}{u}| \leq 1 \text{ for every } x \in \cN \right\}.
\end{equation}
We say that a locally convex space $\cX$ is barrelled if every barrel is a neighbourhood of $0$. A set $S$ is a barrel if it is convex, balanced, absorbing and closed. $S$ is balanced if we have the following: if $x \in S$ and $\alpha \in \bR$, $|\alpha| \leq 1$ then $\alpha x \in S$. $S$ is absorbing if for every $x \in \cX$ there exists a $r \geq 0$ such that if $|\alpha| \geq r$ then $x \in \alpha S$. Barrelled spaces are of importance in view of this paper, because they allow for a well-defined integration theory on the dual space. We state the main result in this direction in Appendix \ref{section:FAappendix}. For example, Banach, Fr\'{e}chet and LF(limit Fr\'{e}chet) spaces are barrelled \cite[Chapter 33]{Tr67}. The space of Schwartz functions is Fr\'{e}chet and the space $C_c^\infty(\bR^d)$ with its usual topology is LF.

\smallskip

To rewrite the rate function obtained in Theorem \ref{The:LDP1}, we restrict to locally compact metric spaces $(E,d)$ and we consider the situation where $S(t)f(x) = E[f(X(t)) \, | \, X(0)=x]$ is a strongly continuous semigroup on the space $(C_0(E),\vn{\cdot})$: for every $t \geq 0$, the map $S(t) : (C_0(E),\vn{\cdot}) \rightarrow  (C_0(E),\vn{\cdot})$ is continuous, and for every $f \in C_0(E)$, the trajectory $t \mapsto S(t)f$ is continuous in $(C_0(E),\vn{\cdot})$.

Let $(A,\cD(A))$ be the generator of the semigroup $S(t)$. It is a well known result that $X$ solves the martingale problem for $(A,\cD(A))$ \cite[Proposition 4.1.7]{EK86}, so the above result holds for the process $\{X(t)\}_{t \geq 0}$.

Our goal is to rewrite $I$ as
\begin{equation*}
I(\nu) = H(\nu(0) \, | \, \PR_0) + \int_0^t \cL(\nu(s),\dot{\nu}(s))\dd s
\end{equation*}
for a trajectory $\nu$ of probability measures that is absolutely continuous in some sense. Thus our first problem is to define differentiation in a context for which no suitable structure on $E$ or $\cP(E)$ is known. Therefore, we will have to tailor the definition of differentiation to the process itself. 
Suppose that $\mu(t)$ is the law of $X(t)$ under $\PR$. Then we know that $t \mapsto \mu(t) = S(t)' \mu(0)$ is a weakly continuous trajectory in $\cP(E)$, so can ask whether for $f \in \cD(A)$ the trajectory $t \mapsto \ip{f}{\mu(t)}$ is differentiable as a function from $\bR^+ \rightarrow \bR$:
\begin{equation} \label{eqn:weak_differentiation_intro}
\frac{\partial}{\partial t} \ip{f}{\mu(t)} = \frac{\partial}{\partial t} \ip{S(t)f}{\mu(0)} = \ip{S(t)A f}{\mu(0)} = \ip{A f}{\mu(t)}. 
\end{equation}
Thus our candidate for $\dot{\mu}(t)$ is $A' \mu(t)$, which is problematic because $(A,\cD(A))$ could be unbounded. To overcome this, and other problems, we introduce two sets of conditions on $(A,\cD(A))$.

\smallskip

Recall that $D$ is a core for $(A,\cD(A))$ if $D$ is dense in $(C_0(E),\vn{\cdot})$ and if for every $f \in \cD(A)$, we can find a sequence $f_n \in D$ such that $f_n \rightarrow f$ and $Af_n \rightarrow Af$. For general properties of cores see \cite[Chapter 1]{EK86} or \cite[Chapter 2]{EN00}.

\begin{condition} \label{condition:D_algebra_closed_smooth}
There exists a core $D \subseteq \cD(A)$ that satisfies
\begin{enumerate}[(a)]
\item $D$ is an algebra, i.e. if $f,g \in D$ then $fg \in D$,
\item if $f \in D$ and $\phi : \bR \rightarrow \bR$ a smooth function on the closure of range of $f$, then $\phi \circ f - \phi(0) \in D$, 
\end{enumerate}
In the case that $E$ is compact, $C_0(E) = C(E)$, then (b) can be replaced by
\renewcommand{\descriptionlabel}[1]{\hspace{\labelsep}#1}
\begin{description}
\item[(b')] if $f \in D$ and $\phi : \bR \rightarrow \bR$ a smooth function on the range of $f$, then $\phi \circ f \in D$.
\end{description}
\end{condition}

Under Condition \ref{condition:D_algebra_closed_smooth}, we define the operator $H : D \rightarrow C_0(E)$ and for every $g \in D$ the operator $A^g : D \rightarrow C_0(E)$ by
\begin{equation*}
Hf = e^{-f} A e^f, \qquad A^g f = e^{-g} A(fe^g) - (e^{-g}Ae^g)f.
\end{equation*}
These definitions follow \cite{FK06} and are at the basis of a functional analytic approach for studying the Girsanov transform. If $E$ is non-compact, these definitions needs some care as $e^f \notin C_0(E)$. This can be solved by looking at the one-point compactification of $E$, see Section \ref{section:semigroupV}. In Section \ref{section:control_theory_approach}, we will show that $\{V(t)\}_{t \geq 0}$ is a non-linear semigroup on $C_0(E)$ which has a generator that extends $H$. The operators $A^g$ are generators of Markov processes with law $\bQ^g$ on $D_E([0,t])$ that are obtained from $\PR$ by 
\begin{equation} \label{eqn:tilting_procedure_Ag}
\frac{\dd \bQ^g_t}{\dd \PR_t}(X) = \exp\left\{g(X(t)) - g(X(0)) - \int_0^t Hg(X(s)) \dd s \right\},
\end{equation}
where $\PR_t$ and $\bQ^g_t$ are the measures $\PR$ and $\bQ^g$ restricted to times up to $t$, see Proposition \ref{proposition:Girsanov_transform} below.

\begin{condition}[Conditions on the core] \label{condition:topology_on_D}
$D$ satisfies Condition \ref{condition:D_algebra_closed_smooth} and there exists a topology $\tau_D$ on $D$ such that
\begin{enumerate}[(a)]
\item $(D,\tau_D)$ is a separable barrelled locally convex Hausdorff space.
\item The topology $\tau_D$ is finer than the sup norm topology restricted to $D$.
\item If $\phi : [a,b] \rightarrow \bR$ is smooth and such that $\phi(0) = 0$, then the map $T_\phi : D \cap \{f \in D \, | \, f(E) \subseteq [a,b]\} \rightarrow D$, defined by $T_\phi f = \phi \circ f$ is $\tau_D$ to $\tau_D$ continuous.
\item The map $A : (D,\tau_D) \rightarrow (C_0(E),\vn{\cdot})$ is continuous.
\item There exists a barrel $\cN \subseteq (D,\tau_D)$ such that for every $c > 0$, we have $\sup_{f \in c \cN} \vn{Hf} < \infty$.
\end{enumerate}
\end{condition}

Conditions (a) and (b) make sure that $(D,\tau_D)$ is well behaved as a locally convex space in relation to $C_0(E)$. Among other things, we are able to define the Gelfand integral, see Appendix \ref{section:FAappendix}.

Condition (c) will imply that not only (d) holds, but also that (d) holds for all operators $A^g$. This makes sure that we can define $A'$ and $(A^g)'$ to define the weak derivative of suitable trajectories of measures as in \eqref{eqn:weak_differentiation_intro}. In other words: if $\mu^g(t)$ is the trajectory of measures obtained by $S^g(t)' \mu(0)$, where $\{S^g(t)\}_{t \geq 0}$ is the semigroup corresponding to the change of measure in \eqref{eqn:tilting_procedure_Ag}, then $\dot{\mu}^g(t) := (A^g)' \mu^g(t) \in D'$.

\smallskip

The existence of a barrel $\cN$ such that $\sup_{f \in \cN} \vn{Hf} < \infty$ follows from (c), (d) and Lemma \ref{lemma:continuity_of_operators} below. Thus the real assumption in (e) is that one can find a single $\cN$ that works for all $c \geq 0$. This can be interpreted as a growth bound on $H$, which can be used to obtain the compactness of the level sets of $\cL$, and to obtain bounds on linear functionals in terms of the Lagrangian in Lemma \ref{lemma:norm_bounded_by_Lagrangian}. We give an example of a Markov-jump process where this condition is not satisfied due to the global unboundedness of the jump rates in Section \ref{section:example_Markov_jump_processes}.

\smallskip

Note that the barrel $\cN$ can always be replaced by the barrel $\cN^*$ obtained by adding all the constant functions $\alpha\bONE$, $\alpha \in \bR$ and then taking the convex hull. Then $\cN^*$ also satisfies (e). If $g \in c \cN^*$ then there is a $\lambda \in [0,1]$, $f \in c \cN$ and $\alpha \in \bR$ such that $g = \lambda f + (1-\lambda) \alpha \bONE$. Because the map $h \mapsto H h$ is convex(see the proof of Lemma \ref{lemma:variationalexpressionH}), we find
\begin{equation*}
\vn{Hg} \leq \vn{H(\lambda f + (1-\lambda)\alpha \bONE )} \leq \lambda \vn{Hf} + (1-\lambda)\alpha \vn{H \bONE} \leq \vn{Hf}.
\end{equation*}

Thus, we will implicitly assume that $\cN$ includes all the constant functions.

The following lemma is a consequence of Condition \ref{condition:topology_on_D} (c) and (d) and the proof is elementary.

\begin{lemma} \label{lemma:continuity_of_operators}
Let $(D,\tau_D)$ satisfy Condition \ref{condition:topology_on_D}, then the maps $\cA :(D,\tau_D) \times (D,\tau_D) \rightarrow (C_0(E),\vn{\cdot})$ given by $\Phi(g,f) = A^g f$ and the operator $H : (D,\tau_D) \rightarrow (C_0(E),\vn{\cdot})$ are continuous.

\smallskip

Let $g \in D$. As a consequence of the second statement, the map $A^g : (D,\tau_D) \rightarrow (C_0(E),\vn{\cdot})$ is continuous.
\end{lemma}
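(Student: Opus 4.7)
The plan is to decompose $H$ and $A^g$ into compositions of maps that are already guaranteed to be continuous by Condition \ref{condition:topology_on_D}, and then to assemble continuity via joint continuity of multiplication in sup norm.

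First I would tackle $H$. Write $Hf = e^{-f}\, A(e^f - 1)$, using that $A$ annihilates constants (formally, that we pass to the one-point compactification $E^\Delta$ as indicated in the paper, so that $e^{\pm f}$ is well defined and the constant $1$ lies in a domain on which $A$ acts as zero). By Condition (c), the map $f \mapsto e^f - 1$ is continuous $(D,\tau_D)\to(D,\tau_D)$. Composing with $A : (D,\tau_D)\to(C_0(E),\vn{\cdot})$ from Condition (e) gives continuity of $f \mapsto A(e^f - 1)$. For the factor $e^{-f}$, Condition (b) says $\tau_D$ is finer than sup norm, so $f_n \to f$ in $\tau_D$ implies $\vn{f_n-f} \to 0$; since $\exp$ is Lipschitz on sup-norm bounded sets, $f \mapsto e^{-f}$ is continuous from $(D,\tau_D)$ into $(C_b(E),\vn{\cdot})$. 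Finally, multiplication $(C_b(E),\vn{\cdot})\times(C_0(E),\vn{\cdot})\to(C_0(E),\vn{\cdot})$ is jointly continuous (standard estimate $\vn{h_nk_n-hk}\le \vn{h_n}\vn{k_n-k}+\vn{h_n-h}\vn{k}$), which gives continuity of $H$.

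Next, for $\cA(g,f) = A^g f = e^{-g} A(f e^g) - (Hg)\, f$, I would handle the two summands separately. For the first, note that $f e^g = f(e^g - 1) + f$; by Condition (c) (continuity of multiplication and of $\exp-1$ on $(D,\tau_D)$), the map $(g,f) \mapsto f e^g$ is continuous $(D,\tau_D)^2 \to (D,\tau_D)$. Composing with $A$ from Condition (e) yields continuity of $(g,f) \mapsto A(f e^g)$ into $(C_0(E),\vn{\cdot})$, and multiplying by $e^{-g}$ (continuous in sup norm by the argument just used for $H$) gives continuity of the first summand. For the second summand, the continuity of $H$ already established, together with the embedding $(D,\tau_D)\hookrightarrow(C_0(E),\vn{\cdot})$ from (b), makes $(g,f) \mapsto (Hg)\, f$ continuous as a product in $(C_0(E),\vn{\cdot})$. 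Subtracting, $\cA$ is continuous.

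The only place that requires a bit of care rather than mechanical composition is the passage from $\tau_D$-continuity of $f \mapsto e^{-f}$ as a map into $(D,\tau_D)$ (guaranteed by (c)) to continuity as a map into $(C_b(E),\vn{\cdot})$, and more broadly the treatment of $e^{\pm f}\notin C_0(E)$ in the non-compact case via the one-point compactification; once these are unwound nothing deeper is needed, which is why the paper labels the proof elementary.
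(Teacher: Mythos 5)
Your proof is correct and fills in the details that the paper explicitly declines to spell out (the paper only remarks that the lemma ``is a consequence of Condition 2.3 (c) and (e) and the proof is elementary''). Your decomposition of $Hf$ as $e^{-f}A(e^f-1)$ and of $fe^g$ as $f(e^g-1)+f$ is exactly the right device to stay inside $D$ (where $A$ may be applied, by virtue of $\exp - 1$ and multiplication preserving $D$) while sidestepping the fact that $e^{\pm f}\notin C_0(E)$ in the non-compact case; you then use (c) to get $\tau_D$-continuity of these building blocks, (e) to compose with $A$, and (b) together with the standard bilinear estimate to assemble the sup-norm products. One small point worth flagging: the paper credits only (c) and (e), but you correctly observe that (b) is also quietly needed, both to embed $D\hookrightarrow (C_0(E),\vn{\cdot})$ for the factor $f$ in $(Hg)f$ and to pass from $\tau_D$-convergence of $f$ to sup-norm convergence of $e^{-f}$; this is a genuine (if minor) dependency the paper leaves implicit.
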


\begin{remark}
The results of this paper also hold in the case that Condition \ref{condition:topology_on_D} (c) fails as long as the conclusions of Lemma \ref{lemma:continuity_of_operators} hold. In all examples that we consider in Section \ref{section:examples}, (c) is satisfied.
\end{remark}

For the next definition we will need the Gelfand or weak* integral, which is introduced in Appendix \ref{section:FAappendix}, but the rigorous construction of this integral can be skipped on the first reading.

\begin{definition} \label{definition:absolutely_continuous}
Define $D-\cA\cC$, or if there is no chance of confusion, $\cA\cC$, the space of (weakly) absolutely continuous paths in $C_{\cP(E)}(\bR^+)$. A path $\nu \in C_{\cP(E)}(\bR^+)$ is called absolutely continuous if there exists a $(D',wk^*)$ measurable curve $s \mapsto u(s)$ in $D'$ with the following properties:
\begin{enumerate}[(i)]
\item for every $f \in D$ and $t \geq 0$ $\int_0^t |\ip{f}{u(s)}| \dd s < \infty$,
\item for every $t \geq 0$, $\nu(t) - \nu(0) = \int_0^t u(s) \dd s$ as a $D'$ Gelfand integral, i.e.
\begin{equation*}
\ip{f}{\nu(t) - \nu(0)} = \ip{f}{\int_0^t u(s) \dd s} = \int_0^t \ip{f}{u(s)} \dd s, \qquad \forall f \in D.
\end{equation*}
\end{enumerate}
We denote $\dot{\nu}(s) := u(s)$. Furthermore, we will denote $\cA\cC_{\mu}$ for the space of absolutely continuous trajectories starting at $\mu_0$, and $\cA\cC^T$ for trajectories that are only considered up to time $T$. Finally, we define $\cA\cC_{\mu}^T = \cA\cC_\mu \cap \cA\cC^T$.
\end{definition}

A direct consequence of the definition is that if $\nu \in \cA\cC$ then for almost every time $t \geq 0$ and all $f \in D$ the limit
\begin{equation*}
\lim_{h \rightarrow 0} \frac{\ip{f}{\nu(t+h)} - \ip{f}{\nu(t)}}{h} 
\end{equation*}
exists and is equal to $\ip{f}{\dot{\nu(t)}}$. This justifies the notation $u(s) = \dot{\nu}(s)$.

\begin{remark}
When we apply this definition for $D$ equal to the space of compactly supported smooth functions on $\bR^d$ with the natural inductive limit topology, a curve is absolutely continuous in the sense of Definition 4.1 in \cite{DG87} is absolutely continuous in the sense of \ref{definition:absolutely_continuous}. For a trajectory with finite Lagrangian cost, in the sense of the next theorem, the converse holds as well. See Proposition \ref{proposition:finite_lagrangian_cost_implies_strong_abs_cont} and Lemma \ref{lemma:absolutely_continuous_DG} below. 
\end{remark}

Using these definitions, we are able to improve on Theorem \ref{The:LDP1}. 

\begin{theorem} \label{The:LDP2}
Let $(E,d)$ be locally compact. Let $(A,\cD(A))$ have a core $D$ equipped with a topology $\tau_D$ such that $(D,\tau_D)$ satisfies Condition \ref{condition:topology_on_D}. Then, the rate function in Theorem \ref{The:LDP1} can be rewritten as 
\begin{equation*}
I(\nu) = \begin{cases}
H(\nu(0)\, | \, \PR_0) + \int_0^\infty \cL(\nu(s),\dot{\nu}(s)) \dd s  & \text{if } \nu \in \cA \cC \\
\infty & \text{otherwise},
\end{cases}
\end{equation*}
where $\cL : \cP(E) \times D' \rightarrow [0,\infty]$ is given by $\cL(\mu,u) := \sup_{f \in D} \left\{ \ip{f}{u} - \ip{Hf}{\mu} \right\}$.
\end{theorem}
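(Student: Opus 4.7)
The plan is to compare the two expressions for $I(\nu)$ from Theorem \ref{The:LDP1} and the target formula, showing both (i) finiteness of $\sup_{\{t_i\}} \sum_i I_{t_i-t_{i-1}}(\nu(t_i)|\nu(t_{i-1}))$ forces $\nu \in \cA\cC_{\nu(0)}$, and (ii) that on $\cA\cC$ this supremum equals $\int_0^\infty \cL(\nu(s),\dot\nu(s))\dd s$. Part (ii) splits naturally into an upper and a lower bound, the latter requiring the identification of $V(t)$ with a Nisio semigroup (Proposition \ref{proposition:VisV}).

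For the upper bound, I would exploit that $V(\cdot)$ has generator $H$ on a rich subset of $D$. Given $\nu \in \cA\cC_{\nu(0)}$ and $f \in D$, Condition \ref{condition:topology_on_D}(d) gives $V(s)f \in D$ with $s \mapsto V(s)f$ strongly continuous in $(D,\tau_D)$ and $\tfrac{\dd}{\dd s} V(s)f = H V(s)f$. Combined with Definition \ref{definition:absolutely_continuous}, a product-rule computation on $s \mapsto \ip{V(t-s)f}{\nu(s)}$ yields
\begin{equation*}
\ip{f}{\nu(t)} - \ip{V(t)f}{\nu(0)} = \int_0^t \bigl(\ip{V(t-s)f}{\dot\nu(s)} - \ip{HV(t-s)f}{\nu(s)}\bigr)\dd s \leq \int_0^t \cL(\nu(s),\dot\nu(s))\dd s,
\end{equation*}
where the inequality is the very definition of $\cL$. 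Passing to $f\in C_b(E)$ by density (using Condition \ref{condition:D_algebra_closed_smooth}, with the one-point compactification device from Section \ref{section:semigroupV} when $E$ is non-compact) and taking the supremum in \eqref{eqn:ldp1_def_It} gives $I_t(\nu(t)|\nu(0)) \leq \int_0^t \cL\,\dd s$. Summing over any partition then delivers $\sup_{\{t_i\}}\sum I_{t_i-t_{i-1}} \leq \int_0^\infty \cL\,\dd s$.

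For the lower bound and a.c.\ regularity, I would invoke the Nisio semigroup $\mathbf{V}(t)$ built from $\cA\cC$ and $\cL$, together with Proposition \ref{proposition:VisV} which gives $V(t) = \mathbf{V}(t)$. Taking the identification for granted, the log-exponential functional $f\mapsto \ip{V(t)f}{\nu_1}$ is convex and lower semicontinuous, so Fenchel--Moreau inversion of \eqref{eqn:ldp1_def_It} yields
\begin{equation*}
I_t(\nu_2|\nu_1) = \inf\left\{\int_0^t \cL(\nu(s),\dot\nu(s))\,\dd s :\; \nu \in \cA\cC_{\nu_1}^t,\; \nu(t)=\nu_2\right\},
\end{equation*}
with the convention that the infimum is $+\infty$ when no such trajectory exists. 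Telescoping this variational identity over a partition, and using lower semicontinuity along fine refinements, gives the matching lower bound. For the regularity claim, I would control increments via Condition \ref{condition:topology_on_D}(f): the inequality $I_h(\nu(t+h)|\nu(t)) \geq \ip{f}{\nu(t+h)-\nu(t)} - h\sup_{f\in c\cN}\vn{Hf}$ combined with finiteness of $I(\nu)$ delivers equicontinuity of $\ip{f}{\nu(\cdot)}$ uniformly for $f$ in bounded subsets of $(D,\tau_D)$. The barrelled Banach--Steinhaus property (Appendix \ref{section:FAappendix}) then produces a weak$^*$-measurable curve in $D'$ realising $\dot\nu$ in the Gelfand sense.

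The main obstacle, and the heart of the argument, is Proposition \ref{proposition:VisV}, i.e.\ $V(t)=\mathbf{V}(t)$. Following the strategy signalled in the introduction, I would verify a range/resolvent condition for $H$ on $C_0(E)$ that places $V(t)$ in the Crandall--Liggett framework, and construct optimising trajectories through the tilted measures $\bQ^g$ in \eqref{eqn:tilting_procedure_Ag} whose generators $A^g$ furnish the controlled dynamics attached to test functions $g\in D$. Condition \ref{condition:topology_on_D}(f) is critical here, both to ensure compactness of level sets of $\cL$ so the variational infimum is attained, and to supply the continuity needed for the Doob-transform argument codified in Lemma \ref{lemma:continuity_of_operators}.
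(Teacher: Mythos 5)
The proposal shares the two main building blocks with the paper --- Proposition \ref{proposition:VisV} (the identification $V(t)=\mathbf{V}(t)$) and the variational formula $I_t(\nu_2\,|\,\nu_1)=\inf\{\int_0^t\cL(\gamma,\dot\gamma)\dd s : \gamma\in\cA\cC^t_{\nu_1},\,\gamma(t)=\nu_2\}$ (the paper's Lemma \ref{Lem:expressionforIt}) --- but does not close the argument. Since each $I_{t_i-t_{i-1}}(\nu(t_i)\,|\,\nu(t_{i-1}))$ is an \emph{infimum} in which $\nu$ restricted to $[t_{i-1},t_i]$ is itself a competitor, every partition already gives $\sum_i I_{t_i-t_{i-1}}\leq\int_0^\infty\cL(\nu,\dot\nu)\dd s$, so the supremum over partitions is $\leq$ essentially for free, and your product-rule computation is a detour. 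The content is the reverse inequality, and ``telescoping\dots\ lower semicontinuity along fine refinements'' is not a proof: one must produce, from the per-interval optimizers glued over increasingly fine partitions, a convergent subsequence of absolutely continuous paths whose limit is $\nu$ and along which $\int_0^T\cL$ is lower semicontinuous. That requires precisely the compactness of $\cK_M^T$ from Proposition \ref{prop:compacttimelevelsets}, which the proposal never invokes. The paper packages this as: the candidate $J$ is a \emph{good} rate function (Lemma \ref{lemma:Jisgood}), and Lemma 4.6.5 of Dembo--Zeitouni \cite{DZ98} then identifies $I=J$. Goodness is not optional for that lemma and is the hinge you are missing.

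Your separate equicontinuity argument for the AC-regularity claim is also not substantiated. The inequality $I_h(\nu(t+h)\,|\,\nu(t))\geq\ip{f}{\nu(t+h)-\nu(t)}-h\sup_{g\in c\cN}\vn{Hg}$ would follow from $\vn{V(h)f-f}\leq\int_0^h\vn{HV(s)f}\dd s$ only if $\{V(s)f : 0\leq s\leq h,\; f\in c\cN\}$ stayed inside some fixed $c'\cN$; but Condition \ref{condition:topology_on_D}(d) gives only strong continuity of $t\mapsto V(t)f$ for each fixed $f$, not uniform boundedness of $V(s)$ over bounded subsets of $(D,\tau_D)$. In the paper, the statement that finiteness of $I$ forces $\nu\in\cA\cC_{\nu(0)}$ is obtained as a byproduct of the compactness proof in Proposition \ref{prop:compacttimelevelsets}, where the limit path is shown to admit a Gelfand-integral derivative via disintegration of the limiting occupation measure; it is not a standalone equicontinuity/Banach--Steinhaus step.
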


\begin{remark}
If we restrict ourselves to $[0,T]$ instead of $\bR^+$, then we obtain 
\begin{equation*}
I^T(\left\{\nu(s)\right\}_{0 \leq s \leq T}) = \begin{cases}
H(\nu(0)\, | \, \PR_0) +  \int_0^T \cL(\nu(s),\dot{\nu}(s)) \dd s  & \text{if } \nu \in \cA \cC^T \\
\infty & \text{otherwise},
\end{cases}
\end{equation*}
by applying the contraction principle. 
\end{remark}

\begin{remark}
Note that the rate function in terms of $\cL$ can be obtained heuristically from the form of $I_t$ in Theorem \ref{The:LDP1}. Suppose that $\nu \in \cA\cC$. Then
\begin{align*}
& \frac{1}{h} I_h(\nu(t+h) \, | \, \nu(t)) \\
& \quad = \frac{1}{h}\sup_{f \in C_b(E)} \left\{\ip{f}{\nu(t+h)} - \ip{f}{\nu(t)} - \ip{V(h)f - f}{\nu(t)} \right\} \\
& \quad = \frac{1}{h}\sup_{f \in D} \left\{\ip{f}{\nu(t+h)} - \ip{f}{\nu(t)} - \ip{V(h)f - f}{\nu(t)} \right\}. 
\end{align*}
Formally interchanging the limit as $h \downarrow 0$ and taking the supremum over $f \in D$ yields $\frac{1}{h} I_h(\nu(t+h) \, | \, \nu(t)) \approx \sup_{f \in D} \ip{f}{\dot{\nu}(t)} - \ip{Hf}{\nu(t)} = \cL(\nu(t),\dot{\nu}(t))$. This argument can be put to work for continuous $\cL$ and piece-wise `continuously differentiable' trajectories via the Riemann integral and a sequence of  careful choices of times $t_0 < t_1 < \dots < t_n$. However, for arbitrary absolutely continuous trajectories it is not clear to the author how to make such an argument rigorous.
\end{remark}

For trajectories with finite Lagrangian cost, we can strengthen the absolute continuity to strong absolute continuity, in the spirit of Definition 4.1 of \cite{DG87}. 

\begin{definition} \label{definition:strongly_absolutely_continuous}
We say that a path $\nu \in C_{\cP(E)}(\bR^+)$ is strongly absolutely continuous if there exists an absolutely continuous function $H : [0,\infty) \rightarrow \bR$ such that $\sup_{f \in \cN} |\ip{f}{\nu(t)} - \ip{f}{\nu(s)}| \leq |H(t) - H(s)|$ for all $s,t \geq 0$.
\end{definition}

Note that absolute continuity is much easier to establish than strong absolute continuity. Thus, for the main proofs we will use the weak notion. We do mention strong absolute continuity, because this notion allows one to prove integration by parts formula's as in Lemma 4.3 in \cite{DG87}. In general, we have the following result that allows us to strengthen the weak notion to the strong notion.

\begin{proposition} \label{proposition:finite_lagrangian_cost_implies_strong_abs_cont}
Let $(E,d)$ be locally compact. Let $(A,\cD(A))$ have a core $D$ equipped with a topology $\tau_D$ such that $(D,\tau_D)$ satisfies Condition \ref{condition:topology_on_D}. Then we have the following two results.
\begin{enumerate}[(a)]
\item If $\gamma \in C_{\cP(E)}(\bR^+)$ is strongly absolutely continuous, then it is absolutely continuous.
\item If $\gamma \in C_{\cP(E)}(\bR^+)$ is absolutely continuous and is such that
\begin{equation*}
\int_0^\infty \cL(\gamma(s),\dot{\gamma}(s)) \dd s < \infty,
\end{equation*}
then it is strongly absolutely continuous.
\end{enumerate}
\end{proposition}

\section{A study of the operators \texorpdfstring{$V(t)$, $H$, $L$ and $A^g$}{V(t), H, L and Ag}} \label{section:control_theory_approach}

\subsection{The semigroup \texorpdfstring{$V(t)$}{V(t)} and the generator \texorpdfstring{$H$}{H}} \label{section:semigroupV}

We return to the situation that $(E,d)$ is a locally compact metric space, so that we can use semigroup theory to rewrite the rate function. 

First suppose that $E$ is non-compact. Let $E^\Delta = E \cup \{\Delta\}$ be the one-point compactification. By Lemma 4.3.2 in \cite{EK86}, $S(t)$ extends to a strongly continuous contraction semigroup on $(C(E^\Delta), \vn{\cdot})$ by setting $S^\Delta(t)f = f(\Delta) + S(t)(f - f(\Delta))$. Therefore, we can argue using the semigroup on the compact space $E^\Delta$, and then obtain the result in Theorem \ref{The:LDP2} on $E$ by Theorem 4.11 in Feng and Kurtz \cite{FK06}.

Technically, we would also need to add the constants to the core $D$ of Conditions \ref{condition:D_algebra_closed_smooth} and \ref{condition:topology_on_D}. In other words, we should consider $D^\Delta := D \oplus \bR$ with its natural topology $\tau_D^\Delta$. However, the generator $A^\Delta$ of the extended semigroup $\{S^\Delta(t)\}_{t \geq 0}$ satisfies $A^\Delta \bONE = 0$. As a consequence it also holds that $H^\Delta \bONE = 0$, and we can include the constants in a natural way into the barrel $\cN$ of Condition \ref{condition:topology_on_D} (e). By Lemma 3.11 below, this implies that the space $U$ of speeds that have finite Lagrangian cost is a subspace of $D'$. So indeed Theorem \ref{The:LDP2} holds with respect to the core $(D,\tau_D)$ of the generator $(A,\cD(A))$ instead of the core $(D^\Delta,\tau_D^\Delta)$ of $(A^\Delta,\cD(A^\Delta))$.

\smallskip

From this point onward, we assume that $(E,d)$ is compact and that the transition semigroup $\{S(t)\}_{t \geq 0}$ is strongly continuous on $C(E)$. Let $A : \cD(A) \subseteq C(E) \rightarrow C(E)$ be the associated infinitesimal generator.

We examine $V(t)f(x) = \log S(t) e^f(x) = \log \bE\left[e^{f(X(t))} \, \middle| \, X(0) = x \right]$, $f \in C(E)$, which was defined in Theorem \ref{The:LDP1}. It is an elementary calculation to check that $\{V(t)\}_{t \geq 0}$ is a strongly continuous contraction semigroup on $C(E)$.

As in the linear case, define the generator $H$ of $\{V(t)\}_{t \geq 0}$ to be 
\begin{equation*}
Hf = \lim_{t \downarrow 0} \frac{V(t)f - f}{t}
\end{equation*}
defined for $f \in \cD(H)$, where
\begin{equation*}
\cD(H) := \left\{f \in C(E) \, \middle| \, \exists g \in C(E): \, \lim_{t \downarrow 0} \vn{\frac{V(t)f - f}{t} - g} = 0 \right\}.
\end{equation*} 

We start with an extension of the chain rule to Banach spaces. The proof is rather standard and is left to the reader.

\begin{lemma} \label{lemma:differentiation_smooth_semigroup} 
Let $\{f(s)\}_{s \in [0,\varepsilon]}$, $\varepsilon > 0$ be a collection of bounded functions $f(s) : E \rightarrow \bR$ such that $s \mapsto f(s)$ is norm continuous in $C(E)$. Additionally suppose that 
\begin{equation*}
g := \lim_{t \rightarrow 0} \frac{f(t) - f(0)}{t}
\end{equation*}
exists in norm. Denote by $S \subseteq \bR$ the union of ranges $S = \cup_s f(s)(E)$.
Let $\phi : S \rightarrow \bR$ be differentiable on $S$ and let $\phi'$ be Lipschitz continuous. Then it holds that $\frac{\dd}{\dd t} \phi(f(t))|_{t = 0} = \phi'(f(0)) g$, which should be interpreted as
\begin{equation*}
\lim_{t \rightarrow 0} \frac{\phi(f(t)) - \phi(f(0))}{t} = \phi'(f(0)) g
\end{equation*}
with respect to the sup norm.
\end{lemma}

Because for any fixed given $f \in \cD(A)$, we have that $\lim_{t \rightarrow 0} \frac{T(t)f - f}{t} = Af$, we can explicitly calculate the generator $H$ of $V(t)$ on its domain.

\begin{corollary} \label{corollary:calculationH}
For $f \in C(E)$, $e^f \in \cD(A)$ is equivalent to $f \in \cD(H)$ and if this holds, then $Hf = e^{-f}A(e^f)$.
\end{corollary}

\begin{proof}
Because $f \in C(E)$ it is bounded from below and as $S(t)$ is contractive for all $t \geq 0$, we know that $\inf_{x \in e} \inf_t S(t)e^f(x) > 0$. Thus the logarithm and its derivative are Lipschitz on the union of the ranges of $S(t)e^f$. Thus it follows that $f \in \cD(H)$ and $Hf = e^{-f}Ae^f$ by Lemma \ref{lemma:differentiation_smooth_semigroup}.

The proof in the other direction follows similarly as the exponential function and its derivative are Lipschitz on every bounded domain.
\end{proof}

We note that as a consequence of Condition \ref{condition:D_algebra_closed_smooth}, Corollary \ref{corollary:calculationH} gives us that if $f \in D$, then $f \in \cD(H)$ and $Hf = e^{-f}A e^f$. 

Because $D$ is closed under composition with smooth functions, $D$ acts as a `core' for the non-linear operator $(H,\cD(H))$.

\begin{lemma} \label{lemma:approximation_core_H}
Let $f \in \cD(H)$, then we can find a sequence of functions $g_n \in D$ such that $\vn{g_n - f} + \vn{Hg_n - Hf} \rightarrow 0$.
\end{lemma}

\begin{proof}
Because $f \in \cD(H)$, we have $e^f \in \cD(A)$ by an application of Lemma \ref{lemma:differentiation_smooth_semigroup}. $D$ is a core for $(A,\cD(A))$, so we can find $h_n \in D$ such that $\vn{e^f - h_n} + \vn{A e^f - A h_n} \rightarrow 0$. As $f$ is a bounded function, we find that $\inf_x e^f(x) > 0$. Thus, we can assume without loss of generality $\alpha := \inf_n \inf_x h_n(x) > 0$. We define $g_n := \log h_n$. $D$ is closed under composition with smooth functions, which implies that $g_n \in D$.

On $[\alpha, \infty)$ the logarithm $x \mapsto \log x$ is uniformly continuous which implies $\vn{g_n - f} \rightarrow 0$. The map $x \mapsto x^{-1}$ is also uniformly continuous on $[\alpha, \infty)$, which implies are uniformly continuous, thus we find $\vn{e^{-g_n} - e^{-f}} \rightarrow 0$. Because taking products is norm continuous, we find
\begin{equation*}
\vn{H g_n - Hf)} = \vn{e^{-g_n} A h_n - e{-f} A e^f} \rightarrow 0.
\end{equation*}
\end{proof}

We will use this operator $(H,D)$, under Condition \ref{condition:topology_on_D}, to construct a new \textit{Nisio} semigroup $\{\mathbf{V}(t)\}_{t \geq 0}$ on $C(\cP(E))$ that formally equals the semigroup $\{V(t)\}_{t \geq 0}$. This new variational semigroup will be introduced in Section \ref{section:Nisio_semigroup} below and is given by a cost optimization problem. The cost is given in terms of a Lagrangian, that we will introduce next.

\subsection{Operator duality for \texorpdfstring{$H$}{H}}

Additionally to the operator $H$, we introduce operators $A^g$ that serve as generators of tilted Markov processes obtained from $X(t)$ by the change of measure given in Equation \eqref{eqn:tilting_procedure_Ag}. We also introduce an operator $L$, that will serve as a precursor to our final Lagrangian $\cL$.

\begin{definition}
Under Condition \ref{condition:D_algebra_closed_smooth}, define the following operators for $f,g \in D$: $A^g f = e^{-g} A(fe^g) - (e^{-g}Ae^g)f$, $Lg  = A^g g - Hg$.
\end{definition}

$H$ will be called the Hamiltonian and $L$ the (pre-)Lagrangian in analogy to the Lagrangian and Hamiltonian  of classical mechanics. $A^g$ is a generator itself, as we will show below. This is also illustrated by the next two examples. We calculate $H$ and $A^g$ in the case of a Markov jump process and a standard Brownian motion.

\begin{example}
Let $E$ be a finite set and let $\left\{X(t)\right\}_{t \geq 0}$ be generated by 
\begin{equation*}
Af(x) = \sum_y r(x,y)\left[f(y)-f(x)\right],
\end{equation*}
where $r$ is some transition kernel. A calculations shows that
\begin{align*}
Hf(x) & = \sum_y r(x,y) \left[e^{f(y)-f(x)} -1 \right], \\
A^g f(x) & =  \sum_y r(x,y)e^{g(y)-g(x)} \left[f(y)-f(x) \right]. 
\end{align*}
\end{example}

\begin{example}
Let $E = \bR$, and let $\left\{X(t)\right\}_{t \geq 0}$ be a standard Brownian motion, for which the generator $A$ is given for $f \in C_c^\infty(\bR)$, i.e. smooth and compactly supported functions, by $A f(x) = \frac{1}{2} f''(x)$. $H$ and $A^g$ are given by
\begin{equation*}
Hf(x) = \frac{1}{2}f''(x) + \frac{1}{2}(f'(x))^2, \qquad A^g f(x) =  \frac{1}{2}f''(x) + f'(x)g'(x). 
\end{equation*}
\end{example}

The claim that $A^g$ is a generator is made precise by the following Girsanov transform, see Theorem 4.2 in \cite{PR02}.

\begin{proposition} \label{proposition:Girsanov_transform}
Suppose that $g \in D$ and consider the measure $\bQ^g_{[0,T]} \in \cP(D_E([0,T]))$ defined by
\begin{equation*}
\frac{\dd \bQ^g_{[0,T]}}{\dd \PR_{[0,T]}}(X) = \exp\left\{g(X(T)) - g(X(0)) - \int_0^T Hg(X(s)) \dd s \right\},
\end{equation*}
where $\PR_{[0,T]}$ is the measure $\PR$ restricted to $D_E([0,T])$. Then, the coordinate process $X$ is Markov under $\bQ^g_{[0,T]}$ and for every $f \in D$ the process $\{M^f_t\}_{t \in [0,T]}$ defined by
\begin{equation} \label{eqn:martingale_problem_of_tilted_generator}
M^f_t := f(X(t)) - f(X(0)) - \int_0^t A^g f(X(s)) \dd s
\end{equation}
is a mean $0$ martingale with respect to the filtration $\{\cF_t\}_{t \in [0,T]}$ under $\bQ^g_{[0,T]}$.
\end{proposition}

\begin{proof}
The conditions of Theorem 4.2 in \cite{PR02} are satisfied by Condition \ref{condition:D_algebra_closed_smooth} if we take for the domain of $A$ and $A^g$ the core $D$. 
\end{proof}

The transforms introduced above yield absolutely continuous trajectories of measures.

\begin{lemma} \label{lemma:trajectories_Ag_are_absolutely_continuous}
Suppose that $g \in D$ and consider the measure $\bQ^g_{[0,T]} \in \cP(D_E([0,T]))$ introduced in Proposition \ref{proposition:Girsanov_transform}. Denote by $\gamma^g(t)$ the law of $X(t)$ under $\bQ^g_{[0,T]}$. The trajectory $\{\gamma^g(t)\}_{t \in [0,T]}$ is absolutely continuous in the sense of Definition \ref{definition:absolutely_continuous} and $\dot{\gamma}^g(t) = (A^g)'(\gamma^g(t))$ for all $t \in [0,T]$.
\end{lemma}

\begin{proof}
Denote by $\{S^g(t)\}_{t \in [0,T]}$ the semigroup of conditional expectation under $\bQ^g_{[0,T]}$. Let $f \in D$ and let $\gamma(t)$ be the law of $X(t)$ under $\bQ^g_{[0,T]}$. It is straightforward to show that $S^g(t)$ is a strongly continuous semigroup on $(C(E),\vn{\cdot})$ and that $t \mapsto \gamma(t) = (S^g(t))'(\gamma(0))$ is weakly continuous. By Lemma \ref{lemma:continuity_of_operators}, we know that $A^g : (D,\tau_D) \rightarrow (C(E),\vn{\cdot})$ is continuous. We conclude that $t \mapsto (A^g)'(\gamma(t))$ is continuous in $(D,\tau_D)$. 

For every $f \in D$ and $t \geq 0$, we know that 
\begin{equation*}
\int_0^t |\ip{f}{(A^g)'(\gamma(s))}| \dd s = \int_0^t |\ip{A^g f}{\gamma(s)}| \dd s \leq t\vn{A^g f} < \infty.
\end{equation*}
Taking expectation in \eqref{eqn:martingale_problem_of_tilted_generator}, we have that
\begin{equation*}
\ip{S^g(t)f}{\gamma(0)} - \ip{f}{\gamma(0)} = \int_0^t \ip{S^g(s) A^g f}{\gamma(0)} \dd s,
\end{equation*}
which shows that
\begin{equation*}
\ip{f}{\gamma(t)} - \ip{f}{\gamma(0)} = \int_0^t \ip{A^g f}{\gamma(s)} \dd s = \int_0^t \ip{f}{(A^g)'(\gamma(s))} \dd s.
\end{equation*}
We conclude that $\{\gamma^g(t)\}_{t \in [0,T]}$ is absolutely continuous and that $\dot{\gamma}^g(t) = (A^g)'(\gamma^g(t))$ for all $t \in [0,T]$.
\end{proof}

Just as the conditional rate function $I_t$ is related to the semigroup $V(t)$, \cite{Sh85} observed for diffusion and jump-processes that the operator $L$ is related to $H$ and $A^g$ via operator duality. See also the discussion in Section 8.6.1 in \cite{FK06}.

\begin{lemma} \label{Lem:varsforH1}
Under Condition \ref{condition:D_algebra_closed_smooth}, we have for $f \in D$ that
\begin{equation}
\ip{Hf}{\mu} = \sup_{g \in D} \left\{\ip{A^g f}{\mu} - \ip{Lg}{\mu} \right\}, \label{eqn:lemvarH2}
\end{equation}
and equality holds for $g = f$. Furthermore, for $g \in D$ and $\mu \in \mathcal{P}(E)$ it holds that
\begin{equation} \label{eq:varL1}
\ip{Lg}{\mu} = \sup_{f \in D} \left\{ \ip{A^g f}{\mu} - \ip{Hf}{\mu} \right\},
\end{equation}
with equality for $f = g$.
\end{lemma}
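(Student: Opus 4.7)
The plan is to deduce both identities from a single pointwise inequality between $A^g$ and $H$; the two variational formulas are then just algebraic rearrangements, and taking suprema is trivial because equality is attained pointwise.

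First, I would establish the algebraic identity
\begin{equation*}
A^g e^h = e^h\left[H(g+h) - Hg\right], \qquad g,h \in D,
\end{equation*}
which is a direct consequence of the definitions: $A^g e^h = e^{-g}A(e^{g+h}) - (e^{-g}Ae^g)e^h$, combined with $Ae^f = e^f Hf$. In particular, $e^{-h}A^g e^h = H(g+h) - Hg$. Next, I would invoke the Doob/Girsanov identification from \cite{PR02} (recalled around \eqref{eqn:tilting_procedure_Ag}): the operator $A^g$ is the infinitesimal generator of the Markov process with law $\bQ^g$. Condition \ref{condition:D_algebra_closed_smooth}(b) (or (b') in the compact case) guarantees $e^h \in D$, so $e^h$ lies in the domain of $A^g$, and Jensen's inequality for Markov generators applied to the convex function $y\mapsto e^y$ yields
\begin{equation*}
A^g h \;\leq\; e^{-h} A^g e^h \;=\; H(g+h) - Hg.
\end{equation*}

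Substituting $h = f-g$ and using linearity of $A^g$, this becomes the key pointwise inequality
\begin{equation*}
A^g f - A^g g \;\leq\; Hf - Hg, \qquad f,g \in D,
\end{equation*}
with equality at $f=g$ since both sides then vanish. Identity \eqref{eqn:lemvarH2} is now immediate: rewrite the inequality as $A^g f - Lg = A^g f - A^g g + Hg \leq Hf$, integrate against $\mu \in \cP(E)$, and observe that equality holds when $g=f$, so the supremum over $g \in D$ equals $\ip{Hf}{\mu}$ and is attained at $g=f$. Identity \eqref{eq:varL1} follows analogously by rearranging the same inequality as $A^g f - Hf \leq A^g g - Hg = Lg$, integrating, and noting equality at $f=g$.

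The only non-routine step is the Jensen bound $A^g h \leq e^{-h}A^g e^h$. Its justification rests on $A^g$ being a bona fide Markov generator, which is the content of \cite{PR02}, together with $e^h$ lying in its domain, which is exactly what Condition \ref{condition:D_algebra_closed_smooth} was designed to guarantee. Everything else is a short algebraic manipulation.
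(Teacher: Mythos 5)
Your algebra is correct and the argument is sound, but you take a genuinely different route from the paper. The paper's proof never uses the fact that $A^g$ generates a process: it passes to the bounded Yosida approximant $A_\lambda = \lambda^{-1}(J(\lambda)-\bONE)$, which has an explicit jump form, derives the pointwise inequalities $H_\lambda f \geq A^g_\lambda f - L_\lambda g$ directly from the elementary convexity bound $e^y - 1 \geq y$ (this is Lemma 5.7 in Feng--Kurtz), and then takes $\lambda \downarrow 0$. Your proof instead leans on the assertion, cited from \cite{PR02}, that $A^g$ itself generates a Markov semigroup, and then extracts the pointwise inequality $e^{-h}A^g e^h \geq A^g h$ by differentiating the semigroup Jensen inequality $T^g(t)e^h \geq e^{T^g(t)h}$ at $t=0$. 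That differentiation is itself not quite trivial: it needs the Banach-space chain rule for the semigroup (essentially the paper's Lemma~\ref{lemma:differentiation_smooth_semigroup} applied to $A^g$), and it needs $e^h$ to lie in the domain of $A^g$, which holds in the compact case via Condition~\ref{condition:D_algebra_closed_smooth}(b') but would need the usual one-point-compactification detour otherwise. The net effect is that your proof is shorter and more conceptual, exhibiting the variational identity as a direct manifestation of the Girsanov/Doob picture, while the paper's route is more self-contained: it establishes the lemma from Condition~\ref{condition:D_algebra_closed_smooth} and the Feller structure of $A$ alone, with no appeal to whether the tilted operator $A^g$ generates a well-posed Markov process. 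If you keep your route, you should make explicit that you are invoking the chain-rule lemma for semigroups and spell out why $e^h$ is in the domain of $A^g$; otherwise the Jensen step is more of a sketch than a proof.
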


\begin{proof}
For $\lambda > 0$,  consider the resolvent of $A$ defined by $J(\lambda)f := (\bONE - \lambda A)^{-1}f = \int_0^\infty \lambda^{-1}e^{-\lambda^{-1} t} S(t)f \dd t$. The Yosida approximants of $A$ are defined as $A_\lambda := \lambda^{-1} (J(\lambda) - \bONE) = A J(\lambda)$. It is well known that the $A_\lambda$ are bounded and are given by
\begin{equation*}
A_\lambda f(x) = \lambda^{-1} \int q_\lambda(x,\dd y) \left[f(y) - f(x)\right],
\end{equation*}
where $q_\lambda(x,\cdot)$ is the law of the process generated by $A$ after an exponential random time with mean $\lambda$. Next, define $H_\lambda, A^g_\lambda$ and $L_\lambda$ in terms of $A_\lambda$. As $A_\lambda$ is bounded, it follows by Lemma 5.7 in \cite{FK06} that
\begin{equation*}
H_\lambda f (x) \geq A_\lambda^g f(x) - L_\lambda g(x), \qquad H_\lambda f (x) = A_\lambda^f f(x) - L_\lambda f(x). 
\end{equation*}
Therefore, it follows by Yosida approximation, sending $\lambda \downarrow 0$, cf. \cite[Lemma 1.2.4]{EK86}, that $Hf(x) = \sup_{g \in D} \left\{A^g f(x) - Lg(x) \right\}$. The first statement now follows by integration. The variational statement for $L$ is obtained similarly. 
\end{proof}

\subsection{The Lagrangian and a variational expression for the Hamiltonian} \label{section:Lagrangiandef}

The Lagrangian in the previous section is still an operator acting on functions. Here we embed this object in a new Lagrangian $\cL$ that is a function of place and speed. Also, we introduce a map $\rho$ that transforms `momentum' into speed.

\begin{definition} \label{Def:Lagrangian}
Let $(D,\tau_D)$ satisfy Condition \ref{condition:topology_on_D}. Define the Lagrangian $\cL :  \mathcal{P}(E) \times D' \rightarrow [0,\infty]$ by $\cL(\mu,u) = \sup_{f \in D} \left\{ \ip{f}{u} - \ip{Hf}{\mu} \right\}$. Also, define the map $\rho : \mathcal{P}(E)\times D  \rightarrow D'$ by $\rho(\mu,g) = (A^g)'(\mu)$.
\end{definition}

Note that $\rho$ is well defined by Lemma \ref{lemma:continuity_of_operators}. $\cL$ can be considered as an extension of $L$. Pick $\mu \in \mathcal{P}(E)$ and $g \in D$, then
\begin{equation}\label{eqn:representationL}
\begin{aligned} 
\cL(\mu,\rho(\mu,g)) & = \sup_{f \in D} \left\{\ip{f}{\rho(\mu,g)} - \ip{Hf}{\mu} \right\} \\
& =  \sup_{f \in D} \left\{ \ip{A^gf}{\mu} - \ip{Hf}{\mu} \right\} = \ip{Lg}{\mu},
\end{aligned}
\end{equation}
where the last equality follows by Equation \eqref{eq:varL1}. The following result is immediate.

\begin{lemma}
$(\mu,u) \mapsto \cL(\mu,u)$ is convex and lower semi-continuous with respect to the weak and weak* topologies. 
\end{lemma}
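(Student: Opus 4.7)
The plan is immediate from the variational definition
\[
\cL(\mu,u) = \sup_{f \in D}\left\{ \ip{f}{u} - \ip{Hf}{\mu} \right\}.
\]
For each fixed $f \in D$, I would examine the function
\[
\Phi_f(\mu,u) := \ip{f}{u} - \ip{Hf}{\mu}
\]
on $\cP(E) \times D'$. In the second variable, $u \mapsto \ip{f}{u}$ is linear, and by the very definition of the weak* topology on $D'$, this map is weak*-continuous. In the first variable, since $H : (D,\tau_D) \to (C_0(E),\vn{\cdot})$ (by Condition \ref{condition:topology_on_D} (e), or equivalently Lemma \ref{lemma:continuity_of_operators}), we have $Hf \in C_0(E)$, so $\mu \mapsto \ip{Hf}{\mu}$ is weakly continuous on $\cP(E) \subseteq \cM(E)$. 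Hence $\Phi_f$ is affine and jointly continuous for the product of the weak topology on $\cP(E)$ and the weak* topology on $D'$.

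Convexity of $\cL$ then follows because a pointwise supremum of affine (in particular convex) functions is convex: for $\lambda \in [0,1]$ and $(\mu_i,u_i) \in \cP(E)\times D'$, $i = 1,2$, each $\Phi_f$ satisfies
\[
\Phi_f(\lambda(\mu_1,u_1) + (1-\lambda)(\mu_2,u_2)) = \lambda \Phi_f(\mu_1,u_1) + (1-\lambda)\Phi_f(\mu_2,u_2),
\]
and taking the supremum over $f \in D$ on the right-hand side preserves the inequality needed for convexity of $\cL$.

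Lower semi-continuity likewise follows by the standard fact that a pointwise supremum of lower semi-continuous functions is lower semi-continuous: each $\Phi_f$ is continuous, hence lsc, for the product of the weak and weak* topologies, so
\[
\cL = \sup_{f \in D} \Phi_f
\]
is lsc for the same topology. There is no substantive obstacle here; the statement is essentially an instance of the general principle that a Legendre-type transform is automatically convex and lsc, and the only thing one needs is that the test family $\{f \in D\}$ produces functions $Hf$ that pair continuously with $\mu$, which is exactly the content of $H : (D,\tau_D) \to (C_0(E),\vn{\cdot})$ being well-defined.
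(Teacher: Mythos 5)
Your proof is correct and is essentially the same as the paper's: the paper also observes that $\cL$ is a supremum of functions that are continuous (hence lsc) and affine in $(\mu,u)$, which immediately gives lower semi-continuity and convexity. You simply spell out the continuity of each $\Phi_f$ in a bit more detail, but the argument is identical.
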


It turns out that the space $D'$ is to large for practical purposes. In particular, it is not immediately clear that $D'$ with the weak topology is separable. In the proof of Proposition \ref{prop:compacttimelevelsets} below, we need to integrate over $D'$ and because we want to employ an extended version of the Prohorov theorem that needs separability, we will construct a more regular subspace of $D'$ that contains all relevant `speeds'.

Recall the set $\cN$ introduced in Condition \ref{condition:topology_on_D} (e) and the definition of a Polar in \eqref{eqn:polar_def}. Define $U \subseteq D'$ by $U := \bigcup_{n \in \bN} n \cN^\circ$.

We equip $U$ with the weak* topology inherited from $D'$. The importance of $U$ follows from the following lemma, which shows that we can restrict the set of allowed `speeds' to $U$.

\begin{lemma} \label{lemma:Lagrangian_infinite_out_U}
Let $\mu \in \cP(E)$. If $u \notin U$, then $\cL(\mu,u) = \infty$. Furthermore, for $\mu \in \cP(E)$ and $g \in D$, we have $\rho(\mu,g) \in U$.
\end{lemma}

\begin{proof}
For $u \notin U = \bigcup_{n} n\cN^\circ$, we can find functions $f_n \in \cN$, such that $|\ip{f_n}{u}| \geq n$. The inequality  $|\ip{f_n}{u}| \leq \cL(\mu,u) + \ip{Hf_n}{\mu} \vee \ip{H(-f_n)}{\mu}$, yields that $\cL(\mu,u) \geq n-1$ for every $n$, which implies that $\cL(\mu,u) = \infty$.

The second statement follows from the first, Equation \eqref{eqn:representationL}, and the fact that $Lg$ is bounded.
\end{proof}

As can be seen from Equation \eqref{eqn:representationL}, $\cL$ is an extension of $L$. As expected, $H$ can also be obtained by a Fenchel-Legendre transform of $\cL$.

\begin{lemma} \label{lemma:variationalexpressionH}
The variational expression for $H$ in Equation \eqref{eqn:lemvarH2} extends to $\ip{Hf}{\mu} = \sup_{u \in D'} \left\{ \ip{f}{u} - \cL(\mu,u) \right\} = \sup_{u \in U} \left\{ \ip{f}{u} - \cL(\mu,u) \right\}$.
\end{lemma}

\begin{proof}
As $\cL(\mu,u) = \infty$ if $u \notin U$, the second inequality is immediate. To prove the first equality, first note that by Definition \ref{Def:Lagrangian} of $\cL$, we have for every $f \in D$, $\mu \in \mathcal{P}(E)$, $u \in D'$ that $\ip{Hf}{\mu} \geq  \ip{f}{u} - \cL(\mu,u)$.

We now show that we in fact have equality. By Equation \eqref{eqn:representationL}, we know that $\cL(\mu,\rho(\mu,g)) = \ip{Lg}{\mu}$. Hence, by the second item in Lemma \ref{Lem:varsforH1}, we obtain
\begin{equation} \label{eq:equalityH}
\ip{Hf}{\mu} = \ip{A^f f}{\mu} - \ip{Lf}{\mu}  = \ip{f}{\rho(\mu,f)} - \cL(\mu,\rho(\mu,f)),
\end{equation}
which concludes the proof.
\end{proof}

The identification of the optimizer in the proof of Lemma \ref{lemma:variationalexpressionH} can be used to restrict to even a smaller subset of $D'$. We state the result without proof, as it will not be needed later on.

\begin{proposition} \label{prop:Lagrangian_is_infinity}
Let $\mu \in \cP(E)$ and define $\Gamma_\mu$ to be the weak* closed convex hull of $\{\rho(\mu,g) \in U \, | \,  g \in D\}$.
If $u \notin \Gamma_\mu$, then $\cL(\mu,u) = \infty$.
\end{proposition}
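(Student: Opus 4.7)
The plan is to use a Hahn--Banach separation argument in the locally convex space $(D',wk^*)$ together with the variational identity \eqref{eq:equalityH} from the proof of Lemma \ref{lemma:variationalexpressionH}.

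First I would note that $(D,\tau_D)$ is a Hausdorff locally convex space by Condition \ref{condition:topology_on_D}(a), and hence the continuous dual of $(D',wk^*)$ is exactly $D$ (this is the standard fact that evaluation functionals exhaust the weak* continuous linear functionals). Since $\Gamma_\mu$ is weak* closed and convex by construction and $u \notin \Gamma_\mu$, the Hahn--Banach separation theorem in locally convex Hausdorff spaces produces an $f \in D$ and a constant $c \in \bR$ such that
\begin{equation*}
\ip{f}{u} > c \geq \sup_{v \in \Gamma_\mu} \ip{f}{v}.
\end{equation*}
In particular, because $\rho(\mu,h) \in \Gamma_\mu$ for every $h \in D$, we have $\ip{f}{\rho(\mu,h)} \leq c$ for all $h \in D$, and by positive homogeneity of this inequality in the first argument, $\ip{\lambda f}{\rho(\mu,\lambda f)} \leq \lambda c$ for every $\lambda > 0$.

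Next I would exploit the identity \eqref{eq:equalityH} from the proof of Lemma \ref{lemma:variationalexpressionH}, which gives for each $\lambda > 0$
\begin{equation*}
\ip{H(\lambda f)}{\mu} = \ip{\lambda f}{\rho(\mu,\lambda f)} - \cL(\mu,\rho(\mu,\lambda f)).
\end{equation*}
Since $\cL \geq 0$ (take $f = 0$ in the defining supremum) and $\ip{\lambda f}{\rho(\mu,\lambda f)} \leq \lambda c$ by the previous step, this yields $\ip{H(\lambda f)}{\mu} \leq \lambda c$. Plugging the test function $\lambda f \in D$ into the definition of $\cL(\mu,u)$ then gives
\begin{equation*}
\cL(\mu,u) \geq \ip{\lambda f}{u} - \ip{H(\lambda f)}{\mu} \geq \lambda\bigl(\ip{f}{u} - c\bigr).
\end{equation*}
Because $\ip{f}{u} - c > 0$ by the choice of the separating functional, letting $\lambda \to \infty$ forces $\cL(\mu,u) = \infty$.

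The only delicate point is the separation step: one must be sure that Hahn--Banach applies in $(D',wk^*)$ to separate a weak* closed convex set from an exterior point by a weak* continuous functional, and that the resulting functional is represented by an element of $D$ rather than of some larger space. Both follow from Condition \ref{condition:topology_on_D}(a) together with the standard identification of the dual of a weak* topology; everything else is a short amplification argument in $\lambda$.
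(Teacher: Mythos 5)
Your proof is correct, but it takes a genuinely different route from the paper's. The paper packages the argument through the Fenchel--Moreau theorem: it introduces the modified Lagrangian $\hat{\cL}$ (equal to $\cL$ on $\Gamma_\mu$ and $+\infty$ outside), shows via \eqref{eq:equalityH} that $\hat{\cL}$ and $\cL$ have the same conjugate $\ip{Hf}{\mu}$, and concludes from biconjugation of the two convex lower semi-continuous functions that $\hat{\cL} = \cL$. You instead unpack the Hahn--Banach separation that sits underneath Fenchel--Moreau: you separate $u$ from the weak* closed convex set $\Gamma_\mu$ by an $f \in D$ (correctly identifying the weak*-continuous functionals on $D'$ with elements of $D$, which uses the Hausdorff property from Condition~\ref{condition:topology_on_D}(a)), then use the identity \eqref{eq:equalityH} with test function $\lambda f$ and the nonnegativity of $\cL$ to derive the linear-in-$\lambda$ bound $\ip{H(\lambda f)}{\mu} \leq \lambda c$, and finally amplify $\lambda \to \infty$ in the defining supremum for $\cL(\mu,u)$. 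Both arguments turn on the same two ingredients --- the pointwise identity \eqref{eq:equalityH} along $\rho(\mu,\cdot)$ and weak* separation --- but the paper's route is more compact and gives the stronger structural statement $\hat{\cL}=\cL$ in one stroke, whereas yours is more explicit and constructive, directly exhibiting the blow-up direction $\lambda f$. Both are valid; yours is a fine, slightly more hands-on alternative.
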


\section{Proof of Theorem \ref{The:LDP2}} \label{section:proof_of_ldp2}

We proceed with the proof of Theorem \ref{The:LDP2}. We start with two crucial compactness results which are necessary for the Nisio semigroup, introduced in Section \ref{section:Nisio_semigroup}, to be well behaved.

\subsection{Compactness of the space of paths with bounded Lagrangian cost}

We start with proving the compactness of the level sets of $\cL$.

\begin{proposition} \label{Prop:FKcompactness}
For each $C \geq 0$, the set
\begin{equation*}
\left\{(\mu,u) \in \cP(E) \times U \, \middle| \, \cL(\mu,u) \leq C\right\}
\end{equation*}
is compact with respect to the weak topology on $\cP(E)$ and the weak* topology on $U$.
\end{proposition}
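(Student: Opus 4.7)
The plan is straightforward: bound the $U$-component of any $(\mu,u)$ with $\cL(\mu,u)\leq C$ inside a fixed multiple of the polar $\cN^\circ$, which is weak*-compact by Banach--Alaoglu, and then invoke the lower semi-continuity of $\cL$ (already established) to conclude that the level set is closed inside a compact ambient set.

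First, since $E$ is compact, $\cP(E)$ is compact in the weak topology, so I only need to control the $U$-coordinate. Fix $(\mu,u)$ with $\cL(\mu,u)\leq C$. For every $f\in\cN$ the definition of $\cL$ gives
\begin{equation*}
\ip{f}{u}\leq \cL(\mu,u)+\ip{Hf}{\mu}\leq C+\sup_{g\in\cN}\vn{Hg}\leq C+1,
\end{equation*}
using Condition \ref{condition:topology_on_D}(f). Since $\cN$ is balanced I may replace $f$ by $-f$, so $|\ip{f}{u}|\leq C+1$ for every $f\in\cN$, i.e.\ $u\in(C+1)\cN^\circ$.

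Next I would invoke Banach--Alaoglu: since $(D,\tau_D)$ is barrelled by Condition \ref{condition:topology_on_D}(a), the barrel $\cN$ is a neighbourhood of $0$, and therefore its polar $\cN^\circ$ is weak*-compact in $D'$. Consequently $(C+1)\cN^\circ\subseteq U$ is weak*-compact, and the level set is contained in the compact product $\cP(E)\times(C+1)\cN^\circ$, where the topology on $U$ agrees with the weak* one inherited from $D'$.

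Finally, since $\cL$ is lower semi-continuous with respect to the weak and weak* topologies (being a supremum of functions continuous in $(\mu,u)$), its level set is closed in $\cP(E)\times D'$, hence closed in the compact set $\cP(E)\times(C+1)\cN^\circ$, and therefore itself compact. The only real subtlety lies in Condition \ref{condition:topology_on_D}(f), which is precisely tailored to produce the containment $u\in(C+1)\cN^\circ$; without it, there would be no a priori reason for the level set to lie in a weak*-bounded subset of $D'$, and the Banach--Alaoglu step would have nothing to latch onto.
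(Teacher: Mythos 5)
Your proof is correct and takes essentially the same route as the paper's: close the level set via lower semi-continuity of $\cL$, use Condition \ref{condition:topology_on_D}(f) together with the definition of $\cL$ to trap the $u$-coordinate inside $(C+1)\cN^\circ$, and then invoke (Bourbaki--)Alaoglu for the polar of the barrel $\cN$. The only cosmetic difference is that you spell out the compactness of $\cP(E)$ and the use of balancedness of $\cN$, both of which the paper treats implicitly.
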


\begin{proof}
First of all, as $\cL$ is lower semi-continuous $\{(\nu,u) \in \cP(E) \times U \, | \, \cL(\nu,u) \leq C\}$ is closed. We show that it is contained in a compact set.

Pick the neighbourhood $\cN$ of $0$ that was given in Condition \ref{condition:topology_on_D} (e). Recall that in a barrelled space every barrel is a neighbourhood of $0$. Set $M := \sup_{f \in \cN} \vn{Hf}$. As $\ip{f}{u} \leq \cL(\mu,u) + \ip{Hf}{\mu}$, we obtain
\begin{equation*}
|\ip{f}{u}| \leq \cL(\nu,u) + \ip{Hf}{\nu} \vee \ip{H(-f)}{\nu}.
\end{equation*}
As a consequence,
\begin{equation*}
\left\{(\nu,u) \in \cP(E) \times U \, \middle| \, \cL(\nu,u) \leq C\right\} \subseteq \cP(E) \times |C + M|\cN^\circ.
\end{equation*}
Because $(D',wk^*)$ is Hausdorff and a locally convex space, the closure of this set is compact in $(D',wk^*)$ by the Bourbaki-Aloaglu theorem\cite[Propositions 32.7 and 32.8]{Tr67}, \cite[Theorem III.6]{RR73}.
\end{proof}

We now state an essential ingredient of the proof of Theorem \ref{The:LDP2}.

\begin{proposition} \label{prop:compacttimelevelsets}
For each $M > 0$, and time $T \geq 0$,
\begin{equation*}
\cK_{M}^T := \left\{\nu \in C_{\cP(E)}([0,T]) \, \middle| \,  \nu \in \cA\cC, \int_0^T \cL(\nu(s),\dot{\nu}(s)) \dd s \leq M \right\}
\end{equation*}
is a compact subset of $C_{\cP(E)}([0,T])$.
\end{proposition}

We postpone the lengthy proof of this proposition to Sections \ref{section:preparations_proof_compact_level_sets} and \ref{section:proof_compact_level_sets}. Using the techniques introduced in these sections, we will prove Proposition \ref{proposition:finite_lagrangian_cost_implies_strong_abs_cont} in Section \ref{section:proof_abs_cont}. We focus on proving Theorem \ref{The:LDP2} first, which is done in Sections \ref{section:Nisio_semigroup} to \ref{section:Lagrangian_form}. 

\smallskip

These sections are organised as follows. In Section \ref{section:Nisio_semigroup}, we introduce the Nisio semigroup and prove some basic properties of this semigroup. In Sections \ref{section:first_inequality} and \ref{section:second_inequality}, we prove that the Nisio semigroup bounds the lift of the non-linear semigroup $V(t)$ to $\cP(E)$ from below and from above. In Section \ref{section:Lagrangian_form}, we show that the equality of the two semigroups leads to a Lagrangian form of the rate function.

\subsection{The Nisio semigroup} \label{section:Nisio_semigroup}

\begin{definition} 
The Nisio semigroup $\mathbf{V}$ mapping upper semi-continuous functions on $\cP(E)$ to upper semi-continuous functions on $\cP(E)$ is defined by
\begin{equation*}
\mathbf{V}(t) G(\mu) = \sup_{\nu \in \cA \cC_\mu} \left\{G(\nu(t)) - \int_0^t \cL(\nu(s),\dot{\nu}(s)) \dd s \right\}.
\end{equation*}
\end{definition} 

For a function $f \in C(E)$, we denote with $[f]$ the weakly continuous function on $\cP(E)$ defined by $[f](\mu) = \ip{f}{\mu}$.
Our goal in the next three sections is to show that $\mathbf{V}(t)[f](\mu) = \ip{V(t)f}{\mu}$.

\smallskip

Note that as a direct consequence of Proposition \ref{prop:compacttimelevelsets}, if $G$ is a bounded continuous function, than the supremum is actually attained by a curve starting at $\mu$ in $\cK_{3 \vn{G}}^t$. For example, this is the case if $G = [g]$, for $g \in C(E)$.

\smallskip

We need one small result which states that for sufficiently many $f$, there exists a path such that equality is attained in Young's inequality for every time $t$. The Lemma is used for the proof of Lemma \ref{lemma:Resolvents_inequality} below. 

\begin{lemma} \label{lemma:con810811inFK}
For each $\mu \in \mathcal{P}(E)$ and $f \in D$, there exists $\nu \in \cA\cC_\mu$ such that for every $t \geq 0$
\begin{equation*}
\int_0^t \ip{f}{\dot{\nu}(s)} \dd s = \int_0^t  \ip{Hf}{\nu(s)} + \cL(\nu(s),\dot{\nu}(s)) \dd s.
\end{equation*}
\end{lemma}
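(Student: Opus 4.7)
The identity to be established is pointwise the equality case in the Legendre duality of Lemma \ref{Lem:varsforH1}, so the strategy is to exhibit an explicit curve $\nu$ on which that duality is saturated at every time. Given $\mu\in\cP(E)$ and $f\in D$, the natural candidate is the trajectory of marginals of the Markov process whose generator is the tilted operator $A^f$. Concretely, let $\PR^\mu\in\cP(D_E(\bR^+))$ be the Markov measure with initial law $\mu$ and transition semigroup $\{S(s)\}_{s\ge 0}$, and define $\bQ^f$ by the Girsanov-type change of measure \eqref{eqn:tilting_procedure_Ag}. Because $Hf\in C_0(E)$ is bounded, the exponential functional has mean one, and the cited result \cite{PR02} guarantees that $\bQ^f$ is a Markov measure whose one-step generator is $(A^f,D)$. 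Set $\nu(s):=\bQ^f\circ\pi_s^{-1}$, so that $\nu(0)=\mu$ and $\nu\in C_{\cP(E)}(\bR^+)$ by the Feller property.

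The next step is to verify $\nu\in\cA\cC_\mu$ and identify its derivative. For every $g\in D$ the process $g(X(\cdot))-g(X(0))-\int_0^\cdot A^f g(X(r))\,\dd r$ is a $\bQ^f$-martingale, whence
\begin{equation*}
\ip{g}{\nu(t)}-\ip{g}{\mu}=\int_0^t\ip{A^f g}{\nu(s)}\,\dd s=\int_0^t\ip{g}{\rho(\nu(s),f)}\,\dd s.
\end{equation*}
Define $u(s):=\rho(\nu(s),f)\in U$; the membership $u(s)\in U$ comes from Lemma \ref{lemma:Lagrangian_infinite_out_U}, and $s\mapsto u(s)$ is weak*-measurable in $D'$ because $s\mapsto\nu(s)$ is weakly continuous in $\cP(E)$, $\cM(E)$ embeds continuously in $(D',wk^*)$ by Condition \ref{condition:topology_on_D}(b), and the adjoint of $\Phi(\cdot,f):(D,\tau_D)\to(C_0(E),\vn{\cdot})$ from Lemma \ref{lemma:continuity_of_operators} is weak-to-weak* continuous. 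The integrability condition $\int_0^t|\ip{g}{u(s)}|\,\dd s\le t\vn{A^f g}<\infty$ is immediate, and the displayed equation is exactly the Gelfand integral identity required in Definition \ref{definition:absolutely_continuous}. Hence $\nu\in\cA\cC_\mu$ with $\dot\nu(s)=\rho(\nu(s),f)$.

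It remains to collect Step 2 into the stated identity. By the equality case $g=f$ in Lemma \ref{Lem:varsforH1} we have $\ip{A^f f}{\nu(s)}=\ip{Hf}{\nu(s)}+\ip{Lf}{\nu(s)}$, while Equation \eqref{eqn:representationL} yields $\cL(\nu(s),\rho(\nu(s),f))=\ip{Lf}{\nu(s)}$. Using $\dot\nu(s)=\rho(\nu(s),f)$, these combine into
\begin{equation*}
\ip{f}{\dot\nu(s)}=\ip{A^f f}{\nu(s)}=\ip{Hf}{\nu(s)}+\cL(\nu(s),\dot\nu(s))
\end{equation*}
for every $s$, and integrating on $[0,t]$ gives the lemma. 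The main technical obstacle is Step 2: one must invoke the Girsanov/martingale-problem result for $A^f$ and then translate the resulting martingale identity into the weak* Gelfand-integral form used in Definition \ref{definition:absolutely_continuous}, which relies on the continuity properties collected in Conditions \ref{condition:topology_on_D}(b),(e) and Lemma \ref{lemma:continuity_of_operators}. Everything beyond this is pure Legendre-transform algebra.
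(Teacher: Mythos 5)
Your proof takes exactly the same route as the paper's: choose $\nu$ to be the marginal trajectory of the tilted Markov process generated by $A^f$ (via the change of measure \eqref{eqn:tilting_procedure_Ag} and the cited \cite{PR02}), identify $\dot\nu(s)=\rho(\nu(s),f)$, and integrate the pointwise Legendre equality from Equation \eqref{eq:equalityH} (equivalently the $g=f$ case of Lemma \ref{Lem:varsforH1} plus \eqref{eqn:representationL}). You simply spell out in more detail the verification that $\nu\in\cA\cC_\mu$ and that $\dot\nu(s)$ is weak*-measurable and Gelfand-integrable, steps the paper states in one line; the substance is identical.
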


In particular by taking $f = 0$, we find that there is a path with zero cost. This in turn yields $\mathbf{V}(t)\mathbf{0} = \mathbf{0}$, where $\mathbf{0}$ is the function defined by $\mathbf{0}(\mu) = 0$ for all $\mu \in \cP(E)$.

\begin{proof}
Let $\nu(s)$ be the path obtained by the time projections of the Markov process started at $\mu$ generated by the operator $A^f$, see Proposition \ref{proposition:Girsanov_transform}. This gives us a path such that $\dot{\nu}(s) = (A^f)'(\nu(s)) = \rho(\nu(s),f)$.

By Equation \eqref{eq:equalityH} on page \pageref{eq:equalityH}, it follows that
\begin{equation*}
\ip{Hf}{\nu(s)} = \ip{f}{\rho(\nu(s),f)} - \cL(\nu(s),\rho(\nu(s),f))
\end{equation*}
for every $s$, implying $\int_0^t \ip{Hf}{\nu(s)} \dd s = \int_0^t \left( \ip{f}{\dot{\nu}(s)} - \cL(\nu(s),\dot{\nu}(s)) \right)  \dd s$.
\end{proof}

Just like the semigroup $\{V(t)\}_{t \geq 0}$, the Nisio semigroup $\{\mathbf{V}(t)\}_{t \geq 0}$ enjoys good continuity properties. 

\begin{lemma} \label{lemma:Nisio_V_contractive}
For every $t \geq 0$, $\mathbf{V}(t)$ is contractive, i.e. for bounded and upper semi-continuous functions $F,G$, we have $\vn{\mathbf{V}(t)F - \mathbf{V}(t)G} \leq \vn{F-G}$.
\end{lemma}

The proof of this lemma is straightforward. The next result can be proven using Proposition \ref{prop:compacttimelevelsets} as Lemma 8.16 in \cite{FK06}.

\begin{lemma} \label{lemma:Nisio_cont_in_t}
For every $f \in C(E)$ and $\mu \in \cP(E)$, we have that $t \mapsto \mathbf{V}(t)[f](\mu)$ is continuous.
\end{lemma}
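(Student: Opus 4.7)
Fix $f \in C(E)$ and $\mu \in \cP(E)$. The plan is to prove continuity at an arbitrary $t_0 \geq 0$ by showing upper and lower semicontinuity separately along an arbitrary sequence $t_n \to t_0$. Throughout, the basic a priori bound is $-\vn{f} \leq \mathbf{V}(t)[f](\mu) \leq \vn{f}$: the upper bound comes from non-negativity of $\cL$, and the lower bound from Lemma \ref{lemma:con810811inFK} applied with $f = 0$, which produces a zero-cost trajectory starting at $\mu$.

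For upper semicontinuity, for each $n$ choose $\nu_n \in \cA\cC_\mu$ with
\begin{equation*}
\mathbf{V}(t_n)[f](\mu) \leq \ip{f}{\nu_n(t_n)} - \int_0^{t_n} \cL(\nu_n(s),\dot{\nu}_n(s)) \dd s + \tfrac{1}{n}.
\end{equation*}
Combined with the lower bound $\mathbf{V}(t_n)[f](\mu) \geq -\vn{f}$, this forces $\int_0^{t_n} \cL(\nu_n(s),\dot{\nu}_n(s))\dd s \leq 2\vn{f}+1$. Pick $T > \sup_n t_n$ and extend each $\nu_n$ past $t_n$ by a zero-cost path given by Lemma \ref{lemma:con810811inFK} starting at $\nu_n(t_n)$; the extended paths lie in the compact set $\cK_{2\vn{f}+1}^T$ from Proposition \ref{prop:compacttimelevelsets}. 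Pass to a subsequence converging uniformly on $[0,T]$ to some $\nu_\infty \in \cA\cC_\mu \cap \cK_{2\vn{f}+1}^T$. Uniform convergence together with the continuity of $\nu_\infty$ at $t_0$ gives $\ip{f}{\nu_n(t_n)} \to \ip{f}{\nu_\infty(t_0)}$, and the lower semicontinuity of $\gamma \mapsto \int_0^{T} \cL(\gamma(s),\dot{\gamma}(s))\dd s$ on $\cK_{2\vn{f}+1}^T$ (implicitly used in the proof of Proposition \ref{prop:compacttimelevelsets}) together with $\int_0^{t_n} \cL = \int_0^{T}\cL - \int_{t_n}^T \cL \geq \int_0^{t_n}\cL$ (since the extension is zero-cost) yields
\begin{equation*}
\liminf_n \int_0^{t_n} \cL(\nu_n(s),\dot{\nu}_n(s))\dd s \;\geq\; \int_0^{t_0} \cL(\nu_\infty(s),\dot{\nu}_\infty(s))\dd s.
\end{equation*}
Hence $\limsup_n \mathbf{V}(t_n)[f](\mu) \leq \ip{f}{\nu_\infty(t_0)} - \int_0^{t_0} \cL(\nu_\infty(s),\dot{\nu}_\infty(s))\dd s \leq \mathbf{V}(t_0)[f](\mu)$.

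For lower semicontinuity, take near-optimizers $\nu^*_n \in \cA\cC_\mu$ with $\mathbf{V}(t_0)[f](\mu) \leq \ip{f}{\nu^*_n(t_0)} - \int_0^{t_0}\cL(\nu^*_n(s),\dot{\nu}^*_n(s))\dd s + \tfrac{1}{n}$. Split the sequence $t_n$ into $t_n \leq t_0$ and $t_n > t_0$. In the first case, simply restrict $\nu^*_n$ to $[0,t_n]$ as a candidate for $\mathbf{V}(t_n)[f](\mu)$; in the second, concatenate $\nu^*_n$ with a zero-cost path from Lemma \ref{lemma:con810811inFK} starting at $\nu^*_n(t_0)$. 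In either case the cost is at most $\int_0^{t_0} \cL(\nu^*_n(s),\dot{\nu}^*_n(s))\dd s$, and the endpoint evaluation differs from $\ip{f}{\nu^*_n(t_0)}$ by at most $\sup_{|s-t_0|\leq|t_n-t_0|}|\ip{f}{\nu^*_n(s)} - \ip{f}{\nu^*_n(t_0)}|$. Choosing $\nu^*_n$ from a bounded-cost compact set and extracting a subsequence $\nu^*_n \to \nu^*_\infty \in \cA\cC_\mu$ in $C_{\cP(E)}$ (again via Proposition \ref{prop:compacttimelevelsets}), the equicontinuity furnished by the compact set lets this difference go to zero uniformly in $n$, giving $\liminf_n \mathbf{V}(t_n)[f](\mu) \geq \mathbf{V}(t_0)[f](\mu)$.

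The main obstacle is the handling of the moving integration endpoint $t_n$: one must show simultaneously that the mass of $\cL$ on the small intervals $[t_n \wedge t_0, t_n \vee t_0]$ does not create discontinuities, and that the endpoint $\nu_n(t_n)$ stably tracks $\nu_\infty(t_0)$. Both are handled by the fact that the relevant trajectories live in the common compact set from Proposition \ref{prop:compacttimelevelsets}, whose elements are uniformly continuous into $(\cP(E),\text{weak})$, so all three issues reduce to standard compactness arguments once the zero-cost extension from Lemma \ref{lemma:con810811inFK} is invoked to unify the two comparisons into paths defined on a common time horizon.
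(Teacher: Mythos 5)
Your argument is correct and is essentially the adaptation of Lemma 8.16 in Feng--Kurtz that the paper's one-line proof refers to, built on the compactness of $\cK_M^T$ from Proposition \ref{prop:compacttimelevelsets} and the zero-cost path supplied by Lemma \ref{lemma:con810811inFK} with $f=0$. Two small imprecisions worth fixing in a final write-up: the chain $\int_0^{t_n}\cL = \int_0^T\cL - \int_{t_n}^T\cL \geq \int_0^{t_n}\cL$ is circular as written --- you mean that the zero-cost extension makes $\int_0^{t_n}\cL(\nu_n)=\int_0^T\cL(\nu_n)$, after which lower semicontinuity of $\gamma\mapsto\int_0^T\cL(\gamma,\dot\gamma)$ (equivalently, closedness of all $\cK_c^T$) does the rest; and in the $t_n>t_0$ branch of the lower-semicontinuity argument the endpoint error is not controlled by the modulus of $\nu^*_n$ as you state, since the evaluated path there is the zero-cost extension, but it is bounded by $|\ip{S(t_n-t_0)f-f}{\nu^*_n(t_0)}|\le\vn{S(t_n-t_0)f-f}\to 0$ by strong continuity of $S$, uniformly in $n$ --- which in fact makes the subsequence extraction of the $\nu^*_n$ unnecessary (a single $\epsilon$-optimizer $\nu^*$ independent of $n$ suffices).
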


Now that the basic properties of the Nisio-semigroup are known, we proceed with the proof that $V(t)f = \mathbf{V}(t)[f]$. The argument is split into two steps. The inequality $\ip{V(t)f}{\mu} \geq \mathbf{V}(t)[f](\mu)$ is established by arguments based on approximation of the semigroups by their resolvents. The second inequality is based on a Doob-h transform argument.

\subsection{The first inequality between the two semigroups} \label{section:first_inequality}

For $f \in C(E)$, define $J(\lambda)f := (\bONE - \lambda A)^{-1}f = \int_0^\infty \lambda^{-1}e^{-\lambda^{-1} t} S(t)f \dd t$. Using $J(\lambda)$, we set $R(\lambda)f := \log J(\lambda)e^f$.

\smallskip

We constructed the semigroup $V(t)$ from the linear semigroup $S(t)$, and the operator $R(\lambda)$ from the linear resolvent $J(\lambda)$. One would therefore hope that $R(\lambda)$ equals $(\bONE - \lambda H)^{-1}$. This is not the case, but we do have the following two results, which we will need for the proof of Lemma \ref{lemma:Resolvents_inequality} and Proposition \ref{proposition:VisV1}.

\begin{lemma} \label{lemma:Resolvent_almost}
For $f \in C(E)$ and $\lambda >0$, we have $R(\lambda)f \in \cD(H)$ and $(\bONE - \lambda H)R(\lambda)f \geq f$.
\end{lemma}

\begin{proof}
$J(\lambda)$ maps $C(E)$ bijectively on $\cD(A)$, therefore, $e^{R(\lambda)f} = J(\lambda)e^f \in \cD(A)$. Thus by Corollary \ref{corollary:calculationH}, we have that $R(\lambda)f \in \cD(H)$.

\smallskip

Let $x \in E$, we prove $\left(\bONE - \lambda H\right)R(\lambda) f(x) \geq f(x)$. First, we show that 

We prove that the following quantity is larger than $0$:
\begin{align*}
\left(\bONE - \lambda H\right)R(\lambda) f(x) - f(x) & = R(\lambda)f(x) - f(x) - \lambda \frac{A J(\lambda)e^f(x)}{J(\lambda)e^f(x)} \\
& = R(\lambda)f(x) - f(x) - \frac{J(\lambda) e^f(x) - e^{f(x)}}{J(\lambda) e^f(x)}.
\end{align*}
This is equivalent to showing that
\begin{align*}
J(\lambda) e^f(x) \log\left(J(\lambda)e^f(x)\right) - f(x) J(\lambda)e^f(x) - J(\lambda) e^f(x) + e^{f(x)} 
\end{align*}
is positive, which follows from the fact that for every $c \in \bR$, the function defined for non-negative $y$, given by $y \mapsto y \log y - (c+1)y + e^c$, is non-negative.
\end{proof}

Note that the fact that the function $y \mapsto y \log y - (c+1)y + e^c$ has a unique point where it hits $0$. This means that $(\bONE - \lambda H)R(\lambda)f(x) = f(x)$ only if $\bE[e^{f(X_\tau)} \, | \, X_0 = x] = e^{f(x)}$, where $\tau$ is an exponential random variable with mean $\lambda$ independent of the process $X$. This can not be true in general.

\smallskip

Even though $R(\lambda)$ does not invert $(\bONE - \lambda H)$, it does approximate the semigroup in a way that the resolvents of $H$ would as well.

\begin{lemma} \label{lemma:R_to_V}
For every $f \in C(E)$, we have that $\lim_{n\rightarrow \infty} R\left(n^{-1}\right)^{\lfloor nt \rfloor} f = V(t)f$.
\end{lemma}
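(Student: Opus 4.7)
The plan is to reduce the nonlinear recursion for $R(\lambda)$ to the linear one for $J(\lambda)$ via the exponential/logarithm pair, and then invoke the classical Hille exponential formula for the linear semigroup $S(t)$.

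First I would establish by induction on $k \geq 1$ the identity
$$R(\lambda)^k f = \log J(\lambda)^k e^f, \qquad f \in C(E).$$
The base case $k=1$ is the definition of $R(\lambda)$. For the inductive step,
$$R(\lambda)^{k+1} f = R(\lambda)\bigl(R(\lambda)^k f\bigr) = \log J(\lambda) \exp\bigl(R(\lambda)^k f\bigr) = \log J(\lambda)\, J(\lambda)^k e^f,$$
which gives the claim for $k+1$. Thus the proposition reduces to showing that $\log J(1/n)^{\lfloor nt \rfloor} e^f \to \log S(t) e^f$ in sup norm.

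Next, I would appeal to the standard Hille–Yosida exponential formula: since $\{S(t)\}_{t \geq 0}$ is a strongly continuous contraction semigroup on $(C(E), \vn{\cdot})$ with resolvent $J(\lambda) = (\bONE - \lambda A)^{-1}$, one has
$$\lim_{n \to \infty} \vn{ J(1/n)^{\lfloor nt \rfloor} g - S(t) g } = 0$$
for every $g \in C(E)$ (see for instance Chapter 1 of \cite{EK86}). Applied to $g = e^f$, this yields uniform convergence $J(1/n)^{\lfloor nt \rfloor} e^f \to S(t) e^f$.

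The remaining step is to transport this convergence through the logarithm, for which I need a uniform positive lower bound on $J(1/n)^{\lfloor nt \rfloor} e^f$. Since the Markov process is conservative we have $A \bONE = 0$, whence $J(\lambda) \bONE = \bONE$. As $J(\lambda) = \int_0^\infty \lambda^{-1} e^{-\lambda^{-1} s} S(s)\dd s$ is the Laplace transform of the positive semigroup $S(t)$, it is a positive operator, and the pointwise inequalities $e^{-\vn{f}} \bONE \leq e^f \leq e^{\vn{f}} \bONE$ pass to $e^{-\vn{f}} \leq J(\lambda)^k e^f \leq e^{\vn{f}}$ for all $k$ and all $\lambda > 0$. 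Since $\log$ is uniformly continuous on the interval $[e^{-\vn{f}}, e^{\vn{f}}]$, the uniform convergence of $J(1/n)^{\lfloor nt \rfloor} e^f$ to $S(t)e^f$ passes through $\log$, giving $R(1/n)^{\lfloor nt \rfloor} f \to \log S(t) e^f = V(t) f$ in $C(E)$. The only mild obstacle is precisely this uniform positivity, supplied by the conservativity and positivity of $\{S(t)\}_{t \geq 0}$; everything else is a direct consequence of the classical exponential formula.
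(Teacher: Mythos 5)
Your proof is correct and follows the same route as the paper: reduce $R(\lambda)^k f = \log J(\lambda)^k e^f$, apply the linear exponential formula to $J(1/n)^{\lfloor nt\rfloor} e^f \to S(t)e^f$, and pass the uniform convergence through the logarithm on $[e^{-\vn{f}}, e^{\vn{f}}]$. You merely spell out the induction and the conservativity/positivity argument for the uniform bounds, which the paper treats as implicit.
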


\begin{proof}
By definition, we have $R\left(n^{-1}\right)^{\lfloor nt \rfloor} f = \log J\left(n^{-1}\right)^{\lfloor nt \rfloor} e^f$. For linear semigroups, we know that the resolvents approximate the semigroup: $J\left(\frac{1}{n}\right)^{\lfloor nt \rfloor} e^f \rightarrow S(t)e^f$, see for example Corollary 1.6.8 in \cite{EK86}. Therefore, by uniform continuity of the logarithm on $[e^{-\vn{f}},e^{\vn{f}}]$, we obtain the  final result by applying the logarithm.
\end{proof}

In the next definition, we introduce the resolvent $\mathbf{R}(\lambda)$ of the Nisio semigroup. Using Lemma \ref{lemma:Resolvent_almost}, we show that $\mathbf{R}(\lambda)[f](\mu) \leq [R(\lambda)f](\mu)$ which by approximation yields $\mathbf{V}(t)[f](\mu) \leq \ip{V(t)f}{\mu}$. 

\begin{definition}
Let $G$ be upper semi-continuous and bounded and let $\lambda > 0$. Define the resolvent $\mathbf{R}(\lambda)$ by
\begin{equation*}
\mathbf{R}(\lambda)G(\mu) = \sup_{\nu \in \cA\cC_\mu} \int_0^\infty \frac{1}{\lambda} e^{-\lambda^{-1} s} \left[G(\nu(s)) - \int_0^s \cL(\nu(r),\dot{\nu}(r)) \dd r \right]\dd s.
\end{equation*}
\end{definition}

\begin{lemma} \label{lemma:Resolvents_inequality} 
For $g \in D$ and $\lambda >0$, we have $\mathbf{R}(\lambda)[(\bONE - \lambda H)g] = [g]$. As a consequence, we have for $f \in C(E)$ and $\mu \in \cP(E)$ that
\begin{equation} \label{eqn:equality_of_resolvents}
\mathbf{R}(\lambda)[f](\mu) \leq [R(\lambda)f](\mu).
\end{equation}
\end{lemma} 

\begin{proof}
The first statement follows along the lines of the proof of Lemma 8.19 in \cite{FK06}. Summarising, the inequality $\mathbf{R}(\lambda)[(\bONE - \lambda H)g] \leq [g]$
follows by integration by parts and Young's inequality:
\begin{equation*} 
\ip{g}{u} \leq \ip{Hg}{\mu} + \cL(\mu,u), \quad \mu \in \cP(E), \, u \in D, \, g \in C(E).
\end{equation*}
The second inequality, $\mathbf{R}(\lambda)[(\bONE - \lambda H)g] \geq [g]$, follows by integration by parts and Lemma \ref{lemma:con810811inFK}, which gives us a trajectory for which equality is attained for all times in Young's inequality.

For the second statement, first note that if $F \geq G$, then $\mathbf{R}(\lambda) F \geq \mathbf{R}(\lambda)G$. Therefore, we obtain by Lemma \ref{lemma:Resolvent_almost} that
\begin{equation*}
\mathbf{R}(\lambda)[f](\mu) \leq \mathbf{R}(\lambda)[(\bONE -\lambda H) R(\lambda)f](\mu) = \ip{R(\lambda)f}{\mu}.
\end{equation*}
\end{proof}

The next lemma relies on Lemma \ref{lemma:Nisio_cont_in_t} and follows exactly as Lemma 8.18 in \cite{FK06}.

\begin{lemma}\label{lemma:bR_to_bV}
For $t \geq 0$, $f \in D$ and $\mu \in \cP(E)$, we have
\begin{equation*}
\lim_{n \rightarrow \infty}\mathbf{R}(n)^{\lfloor nt \rfloor}[f](\mu) = \mathbf{V}(t)[f](\mu).
\end{equation*}
\end{lemma}

We are now able to prove the first inequality between the Nisio semigroup $\{\mathbf{V}(t)\}_{t \geq 0}$ and $\{V(t)\}_{t \geq 0}$.

\begin{proposition} \label{proposition:VisV1}
For $t \geq 0$, $f \in C(E)$ and $\mu \in \cP(E)$, we have
\begin{equation*}
\mathbf{V}(t)[f](\mu) \leq \ip{V(t)f}{\mu}.
\end{equation*}
\end{proposition}

\begin{proof}
By repeatedly using Equation \eqref{eqn:equality_of_resolvents}, we obtain 
\begin{equation*}
\mathbf{R}(n^{-1})^{\lfloor nt\rfloor}[f](\mu) \leq \ip{R(n^{-1})^{\lfloor nt\rfloor}f}{\mu},
\end{equation*}
which implies by Lemmas \ref{lemma:R_to_V} and \ref{lemma:bR_to_bV} that $\mathbf{V}(t)[f](\mu) \leq \ip{V(t)f}{\mu}$.
\end{proof}

\subsection{The second inequality between the two semigroups} \label{section:second_inequality}

The second inequality $\mathbf{V}(t)[f](\mu) \geq \ip{V(t)f}{\mu}$ needs more work. As in the proof of Lemma 4.10 in \cite{DG87}, we will argue via the Doob-h transform.  We have the following useful variant of Lemma 2.19 in \cite{Se93}. 

\begin{lemma} \label{lemma:lvl2_Vt_in_terms_of_restricted_entropy}
Let $\PR \in \cP(D_E(\bR^+))$ be Markov with transition semigroup $\{S(t)\}_{t \geq 0}$. Let $h \in C(E)$ and let $t > 0$. Set 
\begin{equation*}
S(\bQ) = \begin{cases}
H(\bQ \, | \, \PR) & \text{if } \bQ_0 = \PR_0, \\
\infty & \text{otherwise}.
\end{cases}
\end{equation*}
Then,
\begin{equation*}
\ip{V(t)h}{\PR_0} = \sup_{\bQ \in \cP(D_E(\bR^+))} \left\{ \ip{h}{\bQ_t} - S(\bQ) \right\},
\end{equation*}
where $\bQ_t$ denotes the time $t$ marginal of $\bQ$. The supremum is attained by the measure $\bQ^h$ defined by
\begin{equation*}
\frac{\dd \bQ^h}{\dd \PR}(X) = \frac{e^{h(X(t))}}{\ip{e^h}{\PR_t}} = e^{h(X(t)) - \ip{V(t)h}{\PR_0}}.
\end{equation*}
\end{lemma}

\begin{proof}
Let $\PR_{0,t} \in \cP(E^2)$ be the restriction of $\PR$ to the time $0$ and time $t$ marginals. As before, we denote by $\PR_0$ the time $0$ marginal of $\PR$ and for a measure $\nu \in \cP(E^2)$ we denote by $\nu_0$ respectively $\nu_1$ the restriction to the first marginal and second marginal. Set
\begin{equation*}
S_t(\nu) = \begin{cases}
H(\nu \, | \, \PR_{0,t}) & \text{if } \nu_0 = \PR_0, \\
\infty & \text{otherwise}.
\end{cases}
\end{equation*}
By Lemma 2.19 in \cite{Se93} and convex duality, we obtain
\begin{equation*}
\ip{V(t)h}{\PR_0} = \sup_{\nu \in \cP(E^2)} \left\{\ip{h}{\nu_2} - S_t(\nu) \right\}.
\end{equation*}
By the contraction principle, we have
\begin{equation*}
H(\nu \, | \, \PR_{0,t})  = \inf \left\{H(\bQ \, | \, \PR) \, | \, \bQ \in \cP(D_E(\bR^+)) \, : \, \bQ_{0,t} = \nu \right\},
\end{equation*}
which implies that
\begin{equation*}
\ip{V(t)h}{\mu} = \sup_{\bQ \in \cP(D_E(\bR^+))} \left\{\ip{h}{\bQ_t} - S(\bQ) \right\}.
\end{equation*}
Now we show that the supremum is achieved for $\bQ^h$ defined by
\begin{equation*}
\frac{\dd \bQ^h}{\dd \PR}(X) = \frac{e^{h(X(t))}}{\ip{e^{h}}{\PR_t}} = e^{h(X(t)) - \ip{V(t)h}{\PR_0}}.
\end{equation*}
Note that $\bQ^h_0 = \PR_0$. Therefore, we obtain that
\begin{multline*}
\ip{h}{\bQ^h_t} - S(\bQ^h) = \ip{h}{\bQ^h_t} - \int \log \frac{\dd \bQ^h}{\dd \PR} \dd \bQ^h \\
= \ip{h}{\bQ^h_t} - \ip{h}{\bQ^h_t} + \ip{V(t)h}{\PR_0} = \ip{V(t)h}{\PR_0}.
\end{multline*} 
\end{proof}

The optimising measure $\bQ^h$ defined in the lemma above has the form of a Doob-h transform, see Doob \cite[page 566]{Do84} or \cite{Ja75,FG97}. For $s \leq t$, define $h(s) = V(t-s) h$, or $e^{h(s)} = S(t-s) e^h$. 

The transition probabilities of the Markov process described by $\bQ^h$ up to time $t$ can be written down explicitly as a semigroup of transition operators $\left\{S^{h[0,t]}(r,s)\right\}_{0\leq r\leq s \leq t}$, where $S^{h[0,t]}(r,s) : C(E) \rightarrow C(E)$ is defined by $S^{h[0,t]}(r,s)f(x) := \bQ^h[f(X(s)) \, | \, X(r) = x]$.  The following result is obtained by a straightforward calculation.

\begin{lemma} \label{lemma:lvl2_transition_semigroup_of_Doob_transform}
The semigroup of transition probabilities of $\bQ^h$ defined by
\begin{equation*}
\frac{\dd \bQ^h}{\dd \PR}(X) = \frac{e^{h(X(t))}}{\ip{e^h}{\PR_t}} = e^{h(X(t)) - \ip{V(t)h}{\PR_0}},
\end{equation*}
is given by
\begin{equation*}
S^{h[0,t]}(r,s)f(x) = e^{-h(r)}(x)S(s-r)\left(fe^{h(s)}\right)(x).
\end{equation*}
\end{lemma}

Because $h \in D$, we find $e^f \in D \subseteq \cD(A)$. As $\cD(A)$ is preserved under the semigroup $\{S(t)\}_{t \geq 0}$, we find $e^{h(s)} \in \cD(A)$ and $h(s) \in \cD(H)$. By Corollary \ref{corollary:calculationH}, we have $\frac{\dd}{\dd s} h(s) = - Hh(s)$. We conclude that
\begin{multline} \label{eqn:rewrite_H_doob_transform}
h(t)X(t) - h(0)(X(0)) \\
= h(t)X(t) - h(0)(X(0)) - \int_0^t H h(s)(X(s)) + \frac{\dd}{\dd s} h(s) (X(s)) \dd s.
\end{multline}

Suppose that $s \mapsto h(s)$ would be continuous in $(D,\tau_D)$, then it is possible to prove a time-dependent version of Proposition \ref{proposition:Girsanov_transform}. This would give that the process $\{X(s)\}_{s \leq t}$ is Markovian under $\bQ^h$ with time dependent generator $s \mapsto A^{h(s)}$ and that for $s \leq t$
\begin{equation*}
M^h_s := h(s)(X(s)) - h(0)(X(0)) - \int_0^s A^{h(s)} h(s)(X(s)) + \frac{\dd}{\dd s}h(s)(X(s)) \dd s
\end{equation*}
is a martingale under $\bQ^h$. Thus,
\begin{align*}
S(\bQ^h) & = H(\bQ^h \, | \, \PR) = \int \log \frac{\dd \bQ^h}{\PR} \dd \bQ^h \\
& = \int  h(t)X(t) - h(0)(X(0)) \\
& \qquad \qquad - \int_0^t H h(s)(X(s)) + \frac{\dd}{\dd s} h(s) (X(s)) \, \dd s \, \bQ^h(\dd X) \\
& = \int M_t^h + \int_0^t A^h(s) h(s) (X(s)) - H(s)(X(s)) \, \dd s \, \bQ^h(\dd X) \\
& = \int \int_0^t Lh(s) (X(s)) \,  \dd s \, \bQ^h(\dd X).
\end{align*} 
Formally interchanging the two integrals yields
\begin{equation} \label{eqn:Doob_first_try_integrals_interchange}
S(\bQ^h) = \int_0^t \int  Lh(s) (X(s)) \,  \bQ^h(\dd X) \, \dd s = \int_0 ^t \cL(\gamma(s),\dot{\gamma}(s)) \dd s,
\end{equation}
where $\gamma(s)$ is the law of $X(s)$ under $\bQ^h$. This would yield the inequality $\mathbf{V}(t)[h](\mu) \geq \ip{V(t)h}{\mu}$, for the Markov process $X$ with starting law $\mu$. However, we made two assumptions that are not necessarily satisfied, so we need to refine our argument.

\smallskip

In \cite{DG87} a similar issue plays a role in the proof of Lemma 4.10. In this context, diffusion processes are considered and $D$ is the space of compactly supported smooth functions. This space is not closed under the evolution of the semigroup $\{S(t)\}_{t \geq 0}$, so also there a refined argument is used. To be precise, the law of the Doob-transform is approximated by the law of Doob-transforms of processes that are killed upon leaving a ball with large radius. It is shown that the law of the Doob-transform is sufficiently well approximated by the killed processes, to still conclude the desired inequality.

\smallskip

Here, we will also consider an approximation of the Doob-transform. Based on the discussion above, we known that the Markov process obtained via the Doob-transform formally has generator $A^{h(s)}$ at time $s$. Below, we will approximate the function $s \mapsto h(s)$ by a collection of piecewise-constant maps $s \mapsto g_n(s) := \sum_{i=1}^n g_{n,i} \bONE_{\{n^{-1} t(i-1) < s \leq n^{-1} t i\}}$ taking its values in $D$. The Markov process with time-dependent generator $s \mapsto A^{g_n(s)}$ can be obtained from $\PR$ via a Girsanov transform and we show that this process converges to the Doob-transform $\bQ^h$ in entropy.

Recall that the logarithm of the change of measures $\frac{\dd \bQ^h}{\dd \PR}$ was given by
\begin{multline*} 
h(t)X(t) - h(0)(X(0)) \\
= h(t)X(t) - h(0)(X(0)) - \int_0^t H h(s)(X(s)) + \frac{\dd}{\dd s} h(s) (X(s)) \dd s,
\end{multline*}
and that $e^{h(s)} = S(t-s) e^h$. So in particular, as $h \in D$, we find $e^f \in D \subseteq \cD(A)$. As $\cD(A)$ is preserved under the semigroup, we find $e^{h(s)} \in \cD(A)$ and by Corollary \ref{corollary:calculationH} that $h(s) \in \cD(H)$.

Both $s \mapsto A e^{h(s)}$ and $s \mapsto e^{h(s)}$ are norm continuous. As $\inf_x \inf_s e^{h(s)}(x) > 0$, we find that $s \mapsto h(s)$ and $s \mapsto e^{-h(s)}$, and as a consequence $s \mapsto Hh(s)= - \frac{\dd}{\dd s} h(s)$ are also norm continuous. Thus, for a fixed $\varepsilon > 0$ we can choose $N$ such that for $n\geq N$: 
\begin{enumerate}[(a)]
\item we have
\begin{equation} \label{eqn:Doob_approx_1}
\sup_{1 \leq i \leq n} \quad \sup_{t(i-1) \leq s \leq ti/n} t\vn{H h(s) - Hh\left(\frac{it}{n}\right)} + \vn{ h(s) -h\left(\frac{it}{n}\right)} \leq \varepsilon.
\end{equation}
\item for all $i \in \{1,\dots,n\}$:
\begin{equation}\label{eqn:Doob_approx_2}
t \vn{\frac{\dd}{\dd s}h(s)|_{s = \frac{ti}{n}} - \frac{h\left(\frac{it}{n}\right) - h\left(\frac{(i-1)t}{n}\right)}{t/n}} \leq \varepsilon.
\end{equation}
\end{enumerate}

As $h(s) \in \cD(H)$, we can use Lemma \ref{lemma:approximation_core_H} to find for $n\geq N$ functions $g_{n,i} \in D$ such that
\begin{equation} \label{eqn:Doob_approx_3}
\sup_{1 \leq i \leq n} n\vn{g_{i,n} - h\left(\frac{it}{n}\right)} + t\vn{H g_{i,n} - Hh\left(\frac{it}{n}\right)} \leq \frac{\varepsilon}{n}.
\end{equation}

Consider the maps $G_n : D_E(\bR^+) \rightarrow \bR$ defined by
\begin{multline} \label{eqn:def_Gn}
G_n(X) = \sum_{i = 1}^{n} g_{n,i}\left(X\left(\frac{it}{n}\right)\right) - g_{n,i}\left(X\left(\frac{(i-1)t}{n}\right)\right) \\
- \int_{\frac{(i-1)t}{n}}^{\frac{it}{n}} H g_{n,i}(X(s)) \dd s
\end{multline}
and set $H(X) := h(t)X(t) - h(0)(X(0))$. We will show that the measures $\bQ[G_n]$ defined by
\begin{equation} \label{eqn:QGn}
\frac{\dd \bQ[G_n]}{\dd \PR}(X) = e^{G_n(X)}
\end{equation}
are approximating the measure $\bQ^h$ in entropy:
\begin{align*}
H(\bQ[G_n] \, | \, \PR) \rightarrow H(\bQ^h \, | \, \PR), \qquad H(\bQ^h \, | \, \bQ[G_n]) \rightarrow 0.
\end{align*}

We start by proving that the functions $G_n$ and $H$ are uniformly bounded.

\begin{lemma} \label{lemma:approx_Doob_transform_uniform_bound}
There is a constant $M > 0$ such that
\begin{equation*}
\sup_n \sup_{x \in D_E(\bR^+)} |G_n(x)| + |H(x)| \leq M.
\end{equation*}
\end{lemma}

\begin{proof}
It is clear that $\sup_x |H(x)| \leq \vn{h(t)} + \vn{h(0)}$. For $G_n$, we first consider the integral part. Fix some $\varepsilon > 0$ and fix $N$ such that for all $n \geq N$ \eqref{eqn:Doob_approx_3} is satisfied, then
\begin{align*}
& \sum_{i=1}^n\int_{\frac{t(i-1)}{n}}^{\frac{it}{n}} \left|H g_{n,i}(X(s))\right| \dd s \\
& \leq \sum_{i=1}^n\int_{\frac{t(i-1)}{n}}^{\frac{it}{n}} \left|H g_{n,i}(X(s)) - Hh\left(\frac{ti}{n}\right)(X(s))\right| + \left|Hh\left(\frac{ti}{n}\right)(X(s))\right| \dd s \\
& \leq \varepsilon + \sup_{s \in [0,t]} t\vn{Hh(s)}.
\end{align*}
For the remainder, we first rearrange:
\begin{multline*}
\sum_{i = 1}^{n} g_{n,i}\left(X\left(\frac{it}{n}\right)\right) - g_{n,i}\left(X\left(\frac{(i-1)t}{n}\right)\right) \\
= g_{n,n}(X(t)) - g_{n,1}(X(0)) - \sum_{i=1}^{n-1} g_{n,i+1}\left(X\left(\frac{ti}{n}\right)\right) -g_{n,i}\left(X\left(\frac{ti}{n}\right)\right).
\end{multline*}
Because $g_{n,n} \rightarrow h(t)$ and $g_{n,1} \rightarrow h(0)$, there is some $M > 0$ such that
\begin{equation*}
\sup_x |g_{n,n}(x(t))| + |g_{n,1}(x(0))| \leq M.
\end{equation*}
For the terms in the sum, we compare to the functions $\{h(s)\}_{s \in [0,t]}$:
\begin{multline*}
\vn{g_{n,i+1} - g_{n,i}} \leq \vn{g_{n,i+1} - h\left(\frac{t(i+1)}{n}\right)} \\
+ \vn{h\left(\frac{t(i+1)}{n}\right) - h\left(\frac{ti}{n}\right)} + \vn{h\left(\frac{ti}{n}\right) - g_{n,i}} \\
\leq \frac{2 \varepsilon}{n} +  \vn{h\left(\frac{t(i+1)}{n}\right) - h\left(\frac{ti}{n}\right)},
\end{multline*}
where we have used \eqref{eqn:Doob_approx_3}. The final term can be bounded using \eqref{eqn:Doob_approx_2}:
\begin{multline*}
\vn{h\left(\frac{t(i+1)}{n}\right) - h\left(\frac{ti}{n}\right)} = \frac{t}{n} \vn{\frac{h\left(\frac{it}{n}\right) - h\left(\frac{(i-1)t}{n}\right)}{t/n}} \\
\leq \frac{t}{n}\vn{\frac{\dd}{\dd s}h(s)|_{s = \frac{ti}{n}} - \frac{h\left(\frac{it}{n}\right) - h\left(\frac{(i-1)t}{n}\right)}{t/n}} + \frac{t}{n} \sup_{s \in [0,t]} \vn{\frac{\dd}{\dd s} h(s)} \\
\leq \frac{\varepsilon}{n} + \frac{t}{n} \sup_{s \in [0,t]} \vn{Hh(s)}.
\end{multline*}
We conclude that for some $N$ and all $n \geq N$, we have
\begin{equation*}
\sup_{n \geq N} \sup_{x \in D_E(\bR^+)} |G_n(x)| + |H(x)| \leq \vn{h(0)} + \vn{h(t)} + M + 2t \sup_s \vn{Hh(s)} + 4 \varepsilon,
\end{equation*}
which concludes the proof.
\end{proof}

\begin{proposition}\label{proposition:approx_of_doob_transform_analytic}
For every $\eta > 0$, there exists an integer $N \geq 1$ and a measurable set $\fS_{N,\eta} \subseteq D_E(\bR^+)$ such that for $n \geq N$
\begin{equation*}
\PR[\fS_{N,\eta}] > 1- \eta, \quad \text{ and } \quad \sup_{X \in \fS_{N,\eta}} \left|G_n(X) - H(X)\right|  \leq \eta.
\end{equation*}
\end{proposition}

We start with the probabilistic content of the proposition.

\begin{lemma} \label{lemma:approximation_of_doob_transform_good_set}
Denote
\begin{equation*}
\Upsilon_n(X,s) := \frac{\dd}{\dd s} h(s)(X(s)) - \sum_{i = 1}^n \bONE_{\{(i-1)t/n < s \leq it/n\}}\frac{\dd}{\dd s} h(s)\left(X\left(\frac{ti}{n}\right)\right),
\end{equation*}
and $\overline{\Upsilon}_N(X,s) := \sup_{n \geq N} \Upsilon_n(X,s)$. We have
\begin{equation*}
\lim_{N \rightarrow \infty} \bE\left[\int_0^t \left| \overline{\Upsilon}_N(X,s) \right| \dd s \right] = 0.
\end{equation*}
There is an $N$, such that
\begin{equation*}
\PR\left[\fS_{N,\varepsilon,\eta} \right] \geq 1- \eta, \qquad \fS_{N,\varepsilon,\eta} := \left\{\forall n \geq N: \, \int_0^t |\Upsilon_n(X,s)| \dd s \leq \varepsilon \right\}.
\end{equation*}
\end{lemma}

\begin{proof}
By the right continuity of paths in the Skorokhod space, the first claim follows by the Dominated convergence theorem. The second claim is a consequence of the first claim and Markov's inequality.
\end{proof}

\begin{proof}[Proof of Proposition \ref{proposition:approx_of_doob_transform_analytic}]
Choose $\eta > 0$ and let $\varepsilon = \frac{\eta}{10}$. Now let $N$ be large enough such that the result in Lemma \ref{lemma:approximation_of_doob_transform_good_set} holds and denote $\fS_{H,\eta} := \fS_{N,\eta/10,\eta}$. Additionally, let $N$ be large enough such that for $n \geq N$ the approximations in \eqref{eqn:Doob_approx_1}, \eqref{eqn:Doob_approx_2} and \eqref{eqn:Doob_approx_3} are valid.

Let $n \geq N$. Reordering the first sum of $G_n$ yields
\begin{align*}
G_n(X) & = g_{n,n}(X(t)) - g_{n,1}(X(0)) - \sum_{i = 1}^{n} \int_{\frac{(i-1)t}{n}}^{\frac{it}{n}} H g_{n,i}(X(s)) \dd s \\
& \qquad - \sum_{i=1}^{n-1} \int_{\frac{it}{n}}^{\frac{(i+1)t}{n}} \frac{g_{n,i+1}\left(X\left(\frac{ti}{n}\right)\right) - g_{n,i}\left(X\left(\frac{ti}{n}\right)\right)}{t/n} \dd s,
\end{align*}
whereas, by \eqref{eqn:rewrite_H_doob_transform}:
\begin{equation*}
H(X) = h(t)X(t) - h(0)(X(0)) - \int_0^t H h(s)(X(s)) + \frac{\dd}{\dd s} h(s) (X(s)) \dd s.
\end{equation*}
We compare the terms of $G$ and $H$. First of all, by \eqref{eqn:Doob_approx_3}, we have $\vn{h(t) - g_{n,n}} \leq \varepsilon$, secondly by \eqref{eqn:Doob_approx_1} and \eqref{eqn:Doob_approx_3}, we have
\begin{equation*}
\vn{h(0) - g_{n,1}} \leq \vn{h(0) - h\left(\frac{t}{n}\right)} + \vn{h\left(\frac{t}{n}\right) - g_{n,1}} \leq 2 \varepsilon.
\end{equation*}
Again by \eqref{eqn:Doob_approx_1} and \eqref{eqn:Doob_approx_3}, we have for $\frac{(i-1)t}{n} \leq s \leq \frac{it}{n}$ that
\begin{multline*}
\int_{\frac{(i-1)t}{n}}^{\frac{ti}{n}}\vn{Hh(s) - H g_{n,i}} \dd s \\
\leq \int_{\frac{(i-1)t}{n}}^{\frac{ti}{n}} \vn{Hh(s) - Hh\left(\frac{it}{n}\right)} + \vn{Hh\left(\frac{it}{n}\right) - H g_{n,i}} \dd s  \leq \frac{2 \varepsilon}{n}.
\end{multline*}
The remaining difference is given by
\begin{multline} \label{eqn:doob_approximation_final_difference}
\int_0^t \frac{\dd}{\dd s} h(s) (X(s)) \dd s - \sum_{i=1}^{n-1} \int_{\frac{it}{n}}^{\frac{(i+1)t}{n}} \frac{g_{n,i+1}\left(X\left(\frac{ti}{n}\right)\right) - g_{n,i}\left(X\left(\frac{ti}{n}\right)\right)}{t/n} \dd s.
\end{multline}
We restrict ourselves to the set $\fS_{N,\eta} = \fS_{N,\eta/10,\eta}$ defined in the second claim of Lemma \ref{lemma:approximation_of_doob_transform_good_set}. On $\fS_{N,\eta}$, we can replace $X(s)$ in the first integral by $X\left(\frac{ti}{n}\right)$ for the appropriate $i$ at the cost of an error of size $\varepsilon$. Thus, it is sufficient to give an upper bound in terms of the supremum norm.

\smallskip

For $\frac{(i-1)t}{n} \leq s \leq \frac{it}{n}$, we obtain by \eqref{eqn:Doob_approx_1}, \eqref{eqn:Doob_approx_2} and \eqref{eqn:Doob_approx_3} that
\begin{multline*}
\vn{\frac{\dd}{\dd s} h(s) - \frac{g_{n,i+1} - g_{n,i}}{t/n}}  \\
\leq \vn{\frac{\dd}{\dd s} h(s) - \frac{\dd}{\dd s} h\left(\frac{ti}{n}\right)} + \vn{\frac{\dd}{\dd s} h\left(\frac{ti}{n}\right) - \frac{h\left(\frac{t(i+1)}{n}\right) - h\left(\frac{ti}{n}\right)}{t/n}} \\
+ \vn{\frac{h\left(\frac{t(i+1)}{n}\right) - h\left(\frac{ti}{n}\right)}{t/n} - \frac{g_{n,i+1} - g_{n,i}}{t/n}} \leq \frac{4 \varepsilon}{t}
\end{multline*}
Thus, on the set $\fS_{N,\eta}$, we can bound \eqref{eqn:doob_approximation_final_difference} from above by
\begin{multline*}
\int_0^t \left|\frac{\dd}{\dd s} h(s) (X(s)) - \sum_{i=1}^{n-1} \bONE_{\{\frac{it}{n} \leq s \leq \frac{(i+1)t}{n}\}} \frac{g_{n,i+1}\left(X\left(\frac{ti}{n}\right)\right) - g_{n,i}\left(X\left(\frac{ti}{n}\right)\right)}{t/n} \right| \dd s \\
\leq 5 \varepsilon.
\end{multline*}
We conclude that on $\fS_\eta$ we have $\sup_{X \in \fS_\eta} \left| G_n(X) - H(X) \right| \leq 10 \varepsilon$.
\end{proof}

Denote by $\bQ[G_n]$ the path-space measure obtained via the change of measure $\frac{\dd \bQ[G_n]}{\dd \PR}(X) = e^{G_n(X)}$ as in \eqref{eqn:QGn}.

\begin{proposition} \label{proposition:Doob_transform_approximation_of_entropy}
Let $\bQ[G_n]$ the path-space measure obtained via the change of measure $\frac{\dd \PR[G_n]}{\dd \PR}(X) = e^{G_n(X)}$ as in \eqref{eqn:QGn}, and let $\bQ^h$, be the measure obtained by the change of measure $\frac{\dd \bQ ^h}{\dd \PR}(X) = e^{H(X)}$ as in Lemma \ref{lemma:lvl2_Vt_in_terms_of_restricted_entropy}.

Then we have
\begin{equation*}
S(\bQ[G_n]) \rightarrow S(\bQ^h), \qquad H(\bQ^h \, | \, \bQ[G_n]) \rightarrow 0.
\end{equation*}
\end{proposition}

\begin{proof}
We start with the first claim. Note that because the time $0$ marginals of $\bQ[G_n]$ and $\bQ^h$ both equal $\PR$, it suffices to prove that
\begin{equation*}
H(\bQ[G_n] \, | \, \PR) \rightarrow H(\bQ^h \, | \, \PR).
\end{equation*}

By Lemma \ref{lemma:approx_Doob_transform_uniform_bound}, we have $\sup_n \sup_x |G_n(x)| + |H(x)| \leq M$. The restriction of the exponential map $\exp$ to $[-M,M]$ is uniformly continuous. Thus, we can find for every $\delta >0$ a $\eta >0$ such that if $|a-b| < \eta$, then $|\exp\{a\} - \exp\{b\}| < \delta$.

Thus, for an arbitrary $\delta > 0$ and corresponding $\eta \leq \delta$, we can find $N$ sufficiently large such that for $n \geq N$ the results of Lemma \ref{lemma:approx_Doob_transform_uniform_bound} and Proposition \ref{proposition:approx_of_doob_transform_analytic} hold. Then, we have
\begin{align*}
& \left| H(\bQ[G_n] \, | \, \PR) - H(\bQ^h \, | \, \PR)\right|  \\
& \leq \int \left| \frac{\dd \bQ[G_n]}{\dd \PR} \log \frac{\dd \bQ[G_n]}{\dd \PR} - \frac{\dd \bQ^h}{\dd \PR} \log \frac{\dd \bQ^h}{\dd \PR} \right| \dd \PR \\
& = \int \left|\frac{\dd \bQ[G_n]}{\dd \PR} - \frac{\dd \bQ^h}{\dd \PR}\right| \left|\log \frac{\dd \bQ[G_n]}{\dd \PR} \right| + \left| \frac{\dd \bQ^h}{\dd \PR} \right| \left|\log  \frac{\dd \bQ[G_n]}{\dd \bQ^h}\right| \dd \PR \\
& \leq \eta e^{2M} M +(1-\eta) \delta M + 2\eta Me^M + (1-\eta) \eta e^M \\
& \leq \delta e^{2M} M + \delta M + 2\delta Me^M + \delta e^M
\end{align*}
where we have bounded the contributions in line three on the set $\fS_{N,\eta}^c$ and $\fS_{N,\eta}$ separately. As $\delta > 0$ was arbitrary, the first claim is proven. The second claim follows similarly.
\end{proof}

\begin{lemma} \label{lemma:entropy_represented_as_Lagrangian}
Fix $n$, and consider the measure $\bQ[G_n]$ defined in \eqref{eqn:QGn}. Then
\begin{equation*}
S(\bQ[G_n] \, | \, \PR) = \int_0^t \cL(\gamma_n(s),\dot{\gamma}_n(s)) \dd s,
\end{equation*}
where $\gamma_n(s)$ is the law of $X(s)$ under $\bQ[G_n]$ and where $\dot{\gamma}_n(s) = (A^g_{n,i})'(\gamma_n(s))$ for $\frac{t(i-1)}{n} < s < \frac{ti}{n}$.
\end{lemma}

\begin{proof}
As $S(\bQ[G_n] \, | \, \PR) = \int \log \frac{\dd \bQ[G_n]}{\dd \PR} \dd \bQ[G_n]$, we study $G_n(X)$. Recall that
\begin{multline*}
G_n(X) = \sum_{i = 1}^{n} g_{n,i}\left(X\left(\frac{it}{n}\right)\right) - g_{n,i}\left(X\left(\frac{(i-1)t}{n}\right)\right) \\
- \int_{\frac{(i-1)t}{n}}^{\frac{it}{n}} H g_{n,i}(X(s)) \dd s.
\end{multline*}
We add and subtract $A^{g_{n,i}}g_{n,i}$ inside the integral.
\begin{align*}
& G_n(X) \\
& = \sum_{i = 1}^{n} g_{n,i}\left(X\left(\frac{it}{n}\right)\right) - g_{n,i}\left(X\left(\frac{(i-1)t}{n}\right)\right) - \int_{\frac{(i-1)t}{n}}^{\frac{it}{n}} A^{g_{n,i}} g_{n,i}(X(s)) \dd s \\
& \qquad \qquad \qquad \qquad \qquad \qquad \qquad + \int_{\frac{(i-1)t}{n}}^{\frac{it}{n}} A^{g_{n,i}} g_{n,i}(X(s)) - H g_{n,i}(X(s)) \dd s \\
& = M[G_n](t) + \int_0^t \sum_{i = 1}^n \bONE_{\{(i-1)t/n < s \leq it/n\}} Lg_{n,i}(X(s)) \dd s, 
\end{align*}
where $s \mapsto M[G_n]_s$ is a $\bQ[G_n]$ martingale by Proposition \ref{proposition:Girsanov_transform}. Thus, integration over $\bQ[G_n]$ yields
\begin{align*}
S(\bQ[G_n] \, | \, \PR) & =  \int \log \frac{\dd \bQ[G_n]}{\dd \PR} \dd \bQ[G_n] \\
& = \int \int_0^t \sum_{i = 1}^n \bONE_{\{(i-1)t/n < s \leq it/n\}} L g_{n,i}(X(s)) \, \dd s \, \bQ[G_n](\dd X).
\end{align*}
As $Lg_{n,i}$ is non-negative, we can interchange the two integrals by Tonelli's theorem, to obtain
\begin{align*}
S(\bQ[G_n] \, | \, \PR) & = \int_0^t \int \sum_{i = 1}^n \bONE_{\{(i-1)t/n < s \leq it/n\}} L g_{n,i}(X(s)) \,  \bQ[G_n](\dd X) \, \dd s \\
& = \int_0^t \sum_{i = 1}^n \bONE_{\{(i-1)t/n < s \leq it/n\}} \ip{L g_{n,i}}{\gamma_n(s)} \, \dd s.
\end{align*}
By Lemma \ref{lemma:absolutely_continuous_DG}, $s \mapsto \gamma_n(s)$ is absolutely continuous and $\dot{\gamma}_n(s) = (A^{g_{n,i}})'(\gamma_n(s))$ for almost every $s \in [0,t]$. This yields by \eqref{eqn:representationL} that $\ip{L g_{n,i}}{\gamma_n(s)} = \cL(\gamma_n(s),\dot{\gamma}_n(s))$ for almost every $s$. We conclude that $S(\bQ[G_n] \, | \, \PR) = \int_0^t \cL(\gamma_n(s),\dot{\gamma}_n(s)) \dd s$.
\end{proof}

We proceed with establishing the second inequality between the two semigroups.

\begin{proposition} \label{proposition:VisV2}
For $t \geq 0$, $h \in C(E)$ and $\mu \in \cP(E)$, we have
\begin{equation*}
\mathbf{V}(t)[h](\mu) \geq \ip{V(t)h}{\mu}.
\end{equation*}
\end{proposition}

\begin{proof}
Fix $t > 0$. As $f \mapsto V(t)f$ and $f \mapsto \mathbf{V}(t)[f]$ are continuous, it suffices to prove the result for $h \in D$. Let $\PR$ be the measure of the Markov process on $D_E(\bR^+)$ with time zero marginal $\PR_0 = \mu$. By Lemma \ref{lemma:lvl2_Vt_in_terms_of_restricted_entropy}, we find
\begin{equation} \label{eqn:proof_second_inequality_Doob_transform_representation}
\ip{V(t)h}{\mu} = \ip{V(t)h}{\PR_0} = \ip{h}{\bQ_t^h} - S(\bQ^h),
\end{equation}
where $\bQ^h$ is the measure defined by
\begin{equation*}
\frac{\dd \bQ^h}{\dd \PR}(X) = e^{h(X(t)) - \ip{V(t)h}{\PR_0}}.
\end{equation*}
Consider the approximating measures $\bQ[G_n]$. By Proposition \ref{proposition:Doob_transform_approximation_of_entropy}, we find that
\begin{enumerate}[(a)]
\item $S(\bQ[G_n]) \rightarrow S(\bQ^h)$,
\item $H(\bQ^h \, | \, \bQ[g_n]) \rightarrow 0$.
\end{enumerate}
Theorem 3.7.8 in \cite{EK86}, the fact that convergence in total variation implies weak convergence and Pinsker's inequality, together with (b), imply that $\bQ[G_n]_t$ converges weakly to $\bQ^h_t$. Secondly, proposition \ref{lemma:entropy_represented_as_Lagrangian} gives that
\begin{equation*}
S(\bQ[G_n]) = H(\bQ[G_n] \, | \, \PR) = \int_0^t \cL(\gamma_n(s),\dot{\gamma}_n(s)) \dd s.
\end{equation*}
Combining these two statements with (a) and \eqref{eqn:proof_second_inequality_Doob_transform_representation}, we find that
\begin{equation*}
\ip{V(t)h}{\mu} = \lim_{n \rightarrow \infty} \ip{f}{\gamma_n(t)} - \int_0^t \cL(\gamma_n(s),\dot{\gamma}_n(s)) \dd s.
\end{equation*}
Because
\begin{equation*}
\sup_n \int_0^t \cL(\gamma_n(s),\dot{\gamma}_n(s)) \dd s = \sup_n H(\bQ[G_n] \, | \PR) < \infty,
\end{equation*}
and $\{\gamma_n(s)\}_{s \in [0,t]} \in C_{\cP(E)}([0,T])$, we can find a converging subsequence in $C_{\cP(E)}([0,T])$ with limit $\{\gamma(s)\}_{s \in [0,t]}$ by Proposition \ref{prop:compacttimelevelsets} that has Lagrangian cost
\begin{equation*}
\int_0^t \cL(\gamma(s),\dot{\gamma}(s)) \dd s \leq \lim_{n \rightarrow \infty} \int_0^t \cL(\gamma_n(s),\dot{\gamma}_n(s)) \dd s. 
\end{equation*}
Combined with the fact that $\gamma_n(t) \rightarrow \gamma(t)$ weakly, we conclude that
\begin{align*}
\ip{V(t)h}{\mu} & = \lim_{n \rightarrow \infty} \ip{f}{\gamma_n(t)} - \int_0^t \cL(\gamma_n(s),\dot{\gamma}_n(s)) \dd s \\
& \leq \ip{h}{\gamma(t)} - \int_0^t \cL(\gamma(s),\dot{\gamma}(s)) \dd s \leq \mathbf{V}(t)[h](\PR_0).
\end{align*}
\end{proof}

\subsection{The Lagrangian form of the rate function} \label{section:Lagrangian_form}

We conclude that Propositions \ref{proposition:VisV1} and \ref{proposition:VisV2} that $\mathbf{V}(t)[f](\mu) = \ip{V(t)f}{\mu}$ for all $t \geq 0$, $f \in C(E)$ and $\mu \in \cP(E)$. We use this identification to prove $I_{t}(\mu_1 \, | \, \mu_0) = \sup_{f \in C_0(E)} \left\{\ip{f}{\mu_1} - \ip{V(t)f}{\mu_2}\right\}$ can be re-expressed using the Lagrangian.

\begin{lemma} \label{Lem:expressionforIt}
Under Condition \ref{condition:topology_on_D}, it holds that 
\begin{equation*}
I_{t}(\mu_1 \, | \, \mu_0) = \inf_{\substack{\nu \in \cA\cC_{\mu_0} \\ \nu(t) = \mu_1}} \int_{0}^{t} \cL(\nu(s),\dot{\nu}(s))  \dd s.
\end{equation*}
\end{lemma}

The proof is a classical proof using convex duality.

\begin{proof}
For a fixed measure $\mu_0 \in \cP(E)$, consider the function $\bL_{\mu_0} : \cP(E) \rightarrow [0,\infty]$ defined by
\begin{equation*}
\mathbb{L}_{\mu_0}(\mu_1) := \inf_{\substack{\nu \in \cA\cC_{\mu_0} \\ \nu(t) = \mu_1}} \int_{0}^{t} \cL(\nu(s),\dot{\nu}(s)) \dd s
\end{equation*} 
Our goal is to prove that $I_{t}(\mu_1 \, | \, \mu_0) = \bL_{\mu_0}(\mu_1)$ by showing that both are the Fenchel-Legendre transform of $\ip{V(t)g}{\mu_1}$. First, we will prove that $\bL_{\mu_0}$ is convex and has compact level sets. This last result implies the lower semi-continuity.

\smallskip

\textit{Step 1}. The convexity of $\bL_{\mu_0}$ follows directly from the convexity of $\cL$ and the fact that $\cA\cC$ is convex. So we are left to prove compactness of the level sets. Pick a sequence $\mu^n$ in the set $\{\mu \, | \, \mathbb{L}_{\mu_0}(\mu) \leq c\}$. We know by definition of $\mathbb{L}_{\mu_0}$ and Proposition \ref{prop:compacttimelevelsets} that there are $\nu^n \in \cK_{c,\{\mu_0\}}^t$ such that $\nu^n(0) = \mu_0$, $\nu^n(t) = \mu^n$ and 
\begin{equation*}
\int_0^t \cL(\nu^n(s),\dot{\nu}^n(s))  \dd s \leq c.
\end{equation*}
Again by Proposition \ref{prop:compacttimelevelsets}, we obtain that the sequence $\nu^n$ has a converging subsequence $\nu^{n_k}$ with limit $\nu^*$ such that
\begin{equation*}
\int_0^t \cL(\nu^*(s),\dot{\nu}^*(s)) \dd s \leq c.
\end{equation*}
Denote with $\mu^* : = \nu^*(t)$, then we know that $\nu^{n_k}(t) \rightarrow \mu^*$ and $\mathbb{L}_{\mu_0}(\mu^*) \leq c$, which implies that $\mathbb{L}_{\mu_0}(\cdot)$ has compact level sets and is lower semi-continuous.

\smallskip

\textit{Step 2}. Now that we know that $\bL_{\mu_0}$ is convex and lower semi-continuous, we are able to prove that $\bL_{\mu_0}(\cdot) = I_{t}(\cdot \, | \, \mu_0)$. 

$\bL_{\mu_0}(\cdot)$ is lower semi-continuous on $\cP(E)$ with respect to the weak topology, so extending its domain of definition to $\cM(E)$ by setting it equal to $\infty$ outside $\cP(E)$ does not change the fact that it is lower semi-continuous.

Because the dual of $(\cM(E),\text{weak})$ is $C(E)$ by the Riesz respresentation theorem and \cite[Theorem V.1.3]{Co07}, we obtain by Lemma 4.5.8 in Dembo and Zeitouni \cite{DZ98} that the Legendre transform $\sup_{g \in C(E)} \left\{\ip{g}{\mu_1} - \mathbf{V}(t) [g](\mu_0) \right\}$ of
\begin{align*}
\sup_{\mu_1} \left\{\ip{g}{\mu_1} - \bL_{\mu_0}(\mu_1) \right\} & = \sup_{\nu \in \cA\cC_{\mu_0}} \left\{\ip{g}{\nu(t)} - \int_0^{t} \cL(\nu(s),\dot{\nu}(s)) \dd s \right\} \\
& = \mathbf{V}(t) [g](\mu_0) 
\end{align*}
equals $\bL_{\mu_0}(\mu_1)$. Therefore, by Propositions \ref{proposition:VisV1} and \ref{proposition:VisV2}, we see
\begin{equation} \label{eqn:FL1}
\bL_{\mu_0}(\mu_1) = \sup_{g \in C_0(E)} \left\{\ip{g}{\mu_1} - \ip{V(t)g}{\mu_0} \right\}.
\end{equation}
On the other hand, by Theorem \ref{The:LDP1},
\begin{equation} \label{eqn:FL2}
I_{t}(\mu_1 \, | \,\mu_0) = \sup_{g \in C_0(E)} \left\{\ip{g}{\mu_1} - \ip{V(t)g}{\mu_0} \right\}.
\end{equation}
The combination of Equations \eqref{eqn:FL1} and \eqref{eqn:FL2}, i.e. both are the Legendre-Fenchel transform of $\ip{V(t)g}{\mu_0}$, yields that
\begin{equation*}
I_{t}(\mu_1 \, | \,\mu_0) = \bL_{\mu_0}(\mu_1) = \inf_{\substack{\nu \in \cA\cC_{\mu_0} \\ \nu(t) = \mu_1}} \int_{0}^{t} \cL(\nu(s),\dot{\nu}(s)) \dd s.
\end{equation*}
\end{proof}

We proceed with the final lemma before the proof of Theorem \ref{The:LDP2}.

\begin{lemma} \label{lemma:Jisgood}
The function $J : C_{\cP(E)}(\bR^+) \rightarrow [0,\infty]$, given by
\begin{equation*}
J(\mu) = \begin{cases}
H(\mu(0) \, | \, \PR_0) + \int_0^\infty \cL(\mu(s),\dot{\mu}(s)) \dd s  & \text{if }\mu \in \cA\cC,  \\
\infty & \text{otherwise},
\end{cases}
\end{equation*}
has compact level sets in $C_{\cP(E)}(\bR^+)$.
\end{lemma}

\begin{proof}
Clearly, $\{J \leq M\} \subseteq \bigcap_T \cK_{M}^T$. So, pick a sequence $\mu^n \in \{J \leq M\}$. For $n = 1$, we can construct a converging subsequence $\mu^{n_{k}}$ in $\cK_{M}^1$ seen as a subset of $C_{\cP(E)}([0,1])$. From this subsequence, we can extract yet another subsequence that has the same property on $[0,2]$. By a diagonal argument, this yields a converging subsequence in $C_{\cP(E)}(\bR^+)$. By the lower semi-continuity of $H(\cdot \, | \PR_0)$ and $\cL$ this yields that the limit is in $\{J \leq M\}$.
\end{proof}

\begin{proof}[Proof of Theorem \ref{The:LDP2}]
By using the contraction principle for the identity map $C_{\cP(E)}(\bR^+) \rightarrow \prod_{\bR^+} \cP(E)$, we find that the rate function in Theorem \ref{The:LDP1} coincides with the rate function which would have been found via the Dawson-Gärtner theorem \cite[Theorem 4.6.1]{DZ98} for the large deviation problem on $\prod_{\bR^+} \cP(E)$.

\smallskip

On the other hand, the rate function of Theorem \ref{The:LDP1} for the finite dimensional distributions at times $0 = t_0 < t_1 < \dots < t_k$ is given by
\begin{equation*}
I[t_0,\dots, t_k](\mu(0),\dots, \mu(t_k)) := H(\mu(0) \, | \PR) + \sum_{i=1}^k I_{t_i- t_{i-1}}(\mu(t_i) \, | \, \mu(t_{i-1})).
\end{equation*}
By Lemma \ref{Lem:expressionforIt}, this can be rewritten as
\begin{multline} \label{eqn:finite_dim_rate_function_Lagrangian_form}
H(\mu(0) \, | \PR) + \sum_{i=1}^k \inf_{\substack{\nu \in \cA\cC \\ \nu(t_{k-1}) = \mu_{t_{k-1}} \\ \nu(t_k) = \mu_{t_k}}} \int_{t_{k-1}}^{t_k} \cL(\nu(s),\dot{\nu}(s))  \dd s \\
= H(\mu(0) \, | \PR) +  \inf \left\{\int_{0}^{\infty} \cL(\nu(s),\dot{\nu}(s))  \dd s \, \middle| \, \nu \in \cA\cC \,  \forall  i: \nu(t_i) = \mu(t_i) \right\}.
\end{multline}
In this context, we can apply Lemma 4.6.5 from \cite{DZ98} to find that if we have a good rate function $J$ on $\prod_{\bR^+} \cP(E)$ that satisfies
\begin{equation} \label{equation:ratefunctionviainf}
I\left[0,t_1,\dots,t_k\right](\mu(0), \mu(t_1),\dots,\mu(t_k))  = \inf\left\{J(\nu) \, \middle| \, \forall i: \nu(t_i) = \mu(t_i) \right\},
\end{equation}
then it holds that $I = J$. The candidate
\begin{equation*}
J(\mu) = \begin{cases}
H(\mu(0) \, | \PR_0) + \left\{\int_0^\infty \cL(\mu(s),\dot{\mu}(s)) \dd s \right\} & \text{if }\mu \in \cA\cC,  \\
\infty & \text{otherwise},
\end{cases}
\end{equation*}
satisfies Equation \eqref{equation:ratefunctionviainf} in view of \eqref{eqn:finite_dim_rate_function_Lagrangian_form}. By Lemma \ref{lemma:Jisgood}, we know that $J$ is a good rate function on $C_{\cP(E)}(\bR^+)$ and therefore also on $\prod_{\bR^+} \cP(E)$.
\end{proof}

\subsection{Preparations for the proof of Proposition \ref{prop:compacttimelevelsets}} \label{section:preparations_proof_compact_level_sets}

We say that a topological space is Souslin if it is the continuous image of a complete separable metric space. For the proof of Proposition \ref{prop:compacttimelevelsets}, we will need the generalisation of one of the implications of the Prohorov theorem.

\begin{theorem}[Prohorov] \label{theorem:Prohorovtheorem}
Let $\cK$ be a subset of the Borel measures on a completely regular Souslin space $\cS$ that is uniformly bounded with respect to the total variation norm. If $\cK$ is a tight family of measures, then $\cK$ has a compact and sequentially compact closure with respect to the weak topology on $\cP(\cS)$. 
\end{theorem}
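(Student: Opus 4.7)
The plan is to reduce to the classical compactness statement on compact Hausdorff spaces via compactification, and then exploit the Souslin structure to recover sequential compactness. Throughout I assume all measures are non-negative; the signed case follows by Hahn decomposition.

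For compactness, since $\cS$ is completely regular I would first embed it as a topological subspace of some compact Hausdorff space $\tilde{\cS}$ (for instance the Stone--\v{C}ech compactification, or the cube $[0,1]^{C_b(\cS,[0,1])}$). Pushing each $\mu \in \cK$ forward to a Borel measure $\tilde{\mu}$ on $\tilde{\cS}$, the family $\{\tilde{\mu} : \mu \in \cK\}$ lies in a norm-bounded subset of $C(\tilde{\cS})^*$, which is weak$^*$ compact by Banach--Alaoglu. Tightness provides compact sets $K_{1/n} \subseteq \cS$ with $\mu(\cS \setminus K_{1/n}) < 1/n$ uniformly on $\cK$; each $K_{1/n}$ is closed in $\tilde{\cS}$, so any weak$^*$ limit $\tilde{\nu}$ of a net in $\{\tilde{\mu}\}$ satisfies $\tilde{\nu}(\tilde{\cS} \setminus K_{1/n}) \leq 1/n$ by the portmanteau inequality for open sets. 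Letting $n \to \infty$ shows $\tilde{\nu}$ is concentrated on $\bigcup_n K_{1/n} \subseteq \cS$, and pulling back yields a candidate limit $\nu \in \cM(\cS)$; a standard check on bounded continuous test functions, combined with complete regularity, then confirms that weak convergence in $\cM(\cS)$ is inherited from weak$^*$ convergence in $C(\tilde{\cS})^*$.

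For sequential compactness I would fix a sequence $\{\mu_n\} \subseteq \cK$ and choose an increasing family of compact sets $K_m \subseteq \cS$ with $\sup_n \mu_n(\cS \setminus K_m) < 1/m$. The Souslin hypothesis enters precisely here: every compact subset of a Souslin space has a countable network (inherited from a Polish preimage via the defining continuous surjection), and a compact Hausdorff space with a countable network is metrizable. Thus each $K_m$ is metrizable, $C(K_m)$ is separable, and the weak$^*$-compact ball in $C(K_m)^*$ containing $\{\mu_n|_{K_m}\}$ is metrizable and hence sequentially compact. A diagonal extraction produces a subsequence $\mu_{n_k}$ whose restriction to every $K_m$ converges weakly$^*$ to some $\nu_m \in \cM(K_m)$; the $\nu_m$ are consistent (by extending test functions via Tietze on the normal space $\tilde{\cS}$), so they assemble into a single Borel measure $\nu$ on $\bigcup_m K_m$, extended by zero to $\cS$. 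A tightness-plus-approximation argument against arbitrary $f \in C_b(\cS)$ then shows $\mu_{n_k} \to \nu$ weakly in $\cM(\cS)$.

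The main obstacle is the sequential statement: on a general completely regular space $\cM(\cS)$ with the weak topology need not be metrizable, so compactness does not automatically entail sequential compactness. The Souslin assumption is exactly what furnishes the metrizability of the individual compacts $K_m$, enabling the diagonal extraction. A second delicate point is assembling the limit measure and verifying convergence against \emph{all} $f \in C_b(\cS)$ rather than only those supported inside some $K_m$; this relies on tightness to bound the uniform error outside $K_m$ by $\|f\|_\infty / m$ and thus pass from pointwise convergence on each $K_m$ to genuine weak convergence on $\cS$.
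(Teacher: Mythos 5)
The paper does not prove this theorem: it cites Bogachev, Theorem 8.6.7, together with Corollary 6.7.8 and Theorem 7.4.3, for the specialization to completely regular Souslin spaces. So what you are being asked to compare against is a reference, and your attempt is a genuine (and largely sensible) reconstruction. The compactness half is fine: compactifying, pushing forward, invoking Banach--Alaoglu, and then using the portmanteau inequality on the open complement of each $K_{1/n}$ to force the net limit to live on $\bigcup_n K_{1/n}\subseteq\cS$ is standard and correct. The observation that a compact subset of a Souslin space has a countable network and is therefore metrizable is also correct and is exactly what makes the diagonal extraction possible.

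However, the sequential half has a genuine gap: the measures $\nu_m$ produced by the diagonal argument are \emph{not} consistent in the sense you assert. Restriction to a compact subset is not a weakly continuous operation, so weak convergence of $\mu_{n_k}|_{K_{m'}}$ does not control $\mu_{n_k}|_{K_m}$. Concretely, take $\cS=[0,2]$, $K_1=[0,1]$, $K_2=[0,2]$, and $\mu_n=(1-\epsilon)\delta_0+\epsilon\,\delta_{1+1/n}$; then $\mu_n|_{K_1}\to(1-\epsilon)\delta_0=\nu_1$ while $\mu_n|_{K_2}\to(1-\epsilon)\delta_0+\epsilon\,\delta_1=\nu_2$, and $\nu_2|_{K_1}\ne\nu_1$. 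What tightness actually buys you is \emph{approximate} consistency, $\bigl|\int_{K_{m'}}f\,\dd\nu_{m'}-\int_{K_m}f\,\dd\nu_m\bigr|\leq 2\vn{f}/m$ for $f\in C_b(\cS)$, so that $m\mapsto\int_{K_m}f\,\dd\nu_m$ is Cauchy. You should therefore bypass the consistency claim altogether: after the diagonal extraction, conclude directly that $\int f\,\dd\mu_{n_k}$ converges for every $f\in C_b(\cS)$ (the above estimate plus tightness gives uniform control of the tail), define the limit functional $\Lambda$, and then identify $\Lambda$ with a Radon measure either via the already-established compactness (a subnet of $\mu_{n_k}$ converges to some $\nu\in\cP(\cS)$, and since the sequence itself is convergent on $C_b(\cS)$, the subnet limit is the sequence limit), or via a Riesz-type representation for tight positive functionals. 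The remainder of your argument then goes through.
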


The Prohorov theorem is given in \cite[Theorem 8.6.7]{Bo07} and its specialisation to completely regular Souslin spaces follows from \cite[Corollary 6.7.8 and Theorem 7.4.3]{Bo07}. Note that the other implication of the ordinary Prohorov theorem does not necessarily hold in this generality \cite[Proposition 8.10.19]{Bo07}.

We will use the Prohorov theorem for measures on the product space $(\cP(E)\times U \times [0,T])$, where the first two spaces are equipped with the weak* topology, and the last space with its standard topology. 

\begin{lemma} \label{lemma:Prohorov_can_be_used}
The space $(\cP(E)\times U \times [0,T])$ is completely regular and Souslin.
\end{lemma}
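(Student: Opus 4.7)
The plan is to verify each of the three factors separately is completely regular and Souslin, and then invoke the fact that finite products of completely regular Souslin spaces are again completely regular and Souslin (complete regularity is preserved under arbitrary products; Souslin-ness is preserved under countable products since the product of countably many continuous surjections from Polish spaces gives a continuous surjection from a Polish product).

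Two of the factors are immediate. Since $E$ is compact metric (after the one-point compactification reduction at the start of Section \ref{section:semigroupV}), $\cP(E)$ endowed with the weak topology is a compact metrizable space, hence Polish, hence both completely regular and Souslin. The interval $[0,T]$ with its usual topology is obviously Polish, hence also completely regular and Souslin.

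The only real work is for $U = \bigcup_{n \in \bN} n \cN^\circ$ with the weak$^*$ topology inherited from $D'$. First I would argue complete regularity: $(D',wk^*)$ is a Hausdorff locally convex topological vector space, and every such space is completely regular, so $U$ as a subspace is completely regular. For the Souslin property I would proceed as follows. By Condition \ref{condition:topology_on_D}(a), $(D,\tau_D)$ is a separable barrelled locally convex Hausdorff space. Because $\cN$ is a barrel and $(D,\tau_D)$ is barrelled, $\cN$ is a neighbourhood of $0$; hence by Bourbaki--Alaoglu its polar $\cN^\circ$ is weak$^*$ compact in $D'$. Separability of $(D,\tau_D)$ implies that weak$^*$ compact subsets of $D'$ are metrizable, so $\cN^\circ$ is compact metrizable, hence Polish. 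Scalar multiplication by $n$ is a weak$^*$ homeomorphism, so each $n\cN^\circ$ is likewise Polish. The topological disjoint union $\bigsqcup_{n} n\cN^\circ$ is a countable disjoint union of Polish spaces and therefore Polish, and the natural map
\begin{equation*}
\bigsqcup_{n \in \bN} n \cN^\circ \longrightarrow U, \qquad (n,u) \mapsto u,
\end{equation*}
is continuous and surjective. This exhibits $U$ as the continuous image of a Polish space, i.e.\ as a Souslin space.

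The main obstacle I anticipate is invoking the right functional-analytic ingredients for $U$: one needs the barrelled hypothesis to convert "barrel" into "neighbourhood of $0$" so that Bourbaki--Alaoglu applies to $\cN^\circ$, and one needs separability of $(D,\tau_D)$ to obtain metrizability of the weak$^*$ compact polars (without which $\cN^\circ$ would only be compact Hausdorff, not Polish, and the Souslin conclusion would fail in general). Once these two points are in place, the remainder is just assembling standard preservation properties.
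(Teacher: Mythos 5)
Your proof is correct and takes essentially the same route as the paper: complete regularity of $U$ from $(D',wk^*)$ being a locally convex Hausdorff space, Souslin-ness of $U$ from barrelledness (making $\cN$ a neighbourhood of $0$), Bourbaki--Alaoglu (compactness of $\cN^\circ$), separability (metrizability of the equicontinuous set $\cN^\circ$), and preservation of both properties under countable unions and finite products. The only difference is that you inline the argument that the paper factors out as Lemma \ref{lemma:UisSouslin}.
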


\begin{proof}
Because taking products and subspaces preserves complete regularity, the first claim follows by establishing complete regularity for $(D',wk^*)$. This follows from Lemma \cite[15.2.(3)]{Ko69}.

The Souslin property follows because $(U,wk^*)$ is Souslin by Condition \ref{condition:topology_on_D} (a) and Lemma \ref{lemma:UisSouslin}, and because the product of Souslin spaces is Souslin, \cite[Lemma 6.6.5]{Bo07}.
\end{proof}

Suppose that we have a weakly converging net of measures on $(\cP(E)\times U \times [0,T])$. By definition, integrals of continuous and bounded functions with respect to this net of measures converges in $\bR$. The next lemmas are aimed to extend this property to continuous functions, that are unbounded, but linear on $U$.

\begin{definition}
For the neighbourhood $\cN$, we define the Minkowski functional $\vn{\cdot}_U$ on $U$ by $\vn{u}_\cN := \inf \left\{c \geq 0 \, \middle| \, u \in c \cN^\circ \right\}$.
\end{definition}

We have the following elementary results.

\begin{lemma} \label{lemma:seminorm_is_achieved}
$\vn{\cdot}_{\cN}$ is a norm on $U$,
$\{u \, | \, \vn{u}_\cN \leq 1\} = \cN^\circ$. Furthermore, for $u \in U$, we have
\begin{equation*}
\sup_{f \in c \cN} \frac{\ip{f}{u}}{\vn{u}_{\cN}} = c.
\end{equation*}
\end{lemma}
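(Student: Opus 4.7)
The plan is to treat this as standard polar/Minkowski functional theory, with the only non-routine point being to verify the definiteness of $\vn{\cdot}_{\cN}$, which uses that $\cN$ is absorbing in $D$. I would prove the three claims in the following order.

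First I would show the seminorm properties. Since $\cN$ is convex and balanced (being a barrel), the polar $\cN^\circ$ is convex and balanced as well. Moreover, by the very definition of $U$ as $\bigcup_n n \cN^\circ$, the set $\cN^\circ$ absorbs all points of $U$. These three properties (convex, balanced, absorbing in $U$) immediately yield that the Minkowski functional $\vn{\cdot}_{\cN}$ is a seminorm on $U$; this is textbook (e.g.\ Treves \cite{Tr67}). For definiteness, suppose $\vn{u}_{\cN} = 0$, so that $u \in \epsilon \cN^\circ$ for every $\epsilon > 0$. Given any $f \in D$, the fact that $\cN$ is absorbing in $D$ provides $r > 0$ with $f \in r \cN$, so writing $f = rg$ with $g \in \cN$ gives $|\ip{f}{u}| = r|\ip{g}{u}| \leq r\epsilon$. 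Letting $\epsilon \downarrow 0$ gives $\ip{f}{u} = 0$ for all $f \in D$, hence $u = 0$.

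Next I would verify $\{u \in U : \vn{u}_{\cN} \leq 1\} = \cN^\circ$. The inclusion $\cN^\circ \subseteq \{u : \vn{u}_{\cN} \leq 1\}$ is immediate from the definition. For the reverse, suppose $\vn{u}_{\cN} \leq 1$. Then for every $c > 1$ we have $u \in c \cN^\circ$, i.e.\ $|\ip{x}{u}| \leq c$ for all $x \in \cN$. Letting $c \downarrow 1$ gives $|\ip{x}{u}| \leq 1$ for every $x \in \cN$, so $u \in \cN^\circ$.

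For the final identity I would first observe that, because $\cN$ is balanced, the set $c\cN$ is symmetric around the origin, so $\sup_{f \in c\cN} \ip{f}{u} = \sup_{f \in c\cN} |\ip{f}{u}| = c \sup_{f \in \cN} |\ip{f}{u}|$. It therefore suffices to show $\sup_{f \in \cN} |\ip{f}{u}| = \vn{u}_{\cN}$. Writing $\alpha := \sup_{f \in \cN} |\ip{f}{u}|$, the inclusion $u \in U$ gives $u \in n\cN^\circ$ for some $n$, so $\alpha \leq n < \infty$, and by the definition of the polar we have $u \in \alpha \cN^\circ$, hence $\vn{u}_{\cN} \leq \alpha$. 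Conversely, if $u \in c \cN^\circ$ then $|\ip{f}{u}| \leq c$ for every $f \in \cN$, so $\alpha \leq c$; taking the infimum over such $c$ yields $\alpha \leq \vn{u}_{\cN}$. Combining gives $\alpha = \vn{u}_{\cN}$, and dividing through by $\vn{u}_{\cN}$ (for $u \neq 0$; the case $u = 0$ being vacuous since then $u \notin U \setminus \{0\}$) finishes the proof.

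No step looks genuinely difficult; the main thing to watch is that the definiteness argument really uses the absorbing property of $\cN$ in $D$, not merely of $\cN^\circ$ in $U$, and that the identification of $\vn{u}_{\cN}$ with $\sup_{f \in \cN} |\ip{f}{u}|$ relies on the polar being given by a closed inequality so that $\alpha \cN^\circ$ contains $u$ rather than only having $u$ in its closure.
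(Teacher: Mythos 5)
Your proof is correct and uses the same approach as the paper, namely the standard polar/Minkowski-functional duality; the paper itself omits the proof (calling it elementary), and the commented-out sketch in the source covers only the final identity by the same two-sided inequality $\sup_{f \in \cN}|\ip{f}{u}| = \vn{u}_{\cN}$ that you establish. Your write-up is a bit more thorough, carefully justifying the definiteness via absorbency of $\cN$ in $D$ and the step $u \in \alpha\cN^\circ$, both of which the paper takes for granted.
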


We use this lemma to find functions $\phi$ of the type given in the following lemma, which is an analogue of the de la Vall\'{e}e-Poussin lemma \cite[Theorem 4.5.9]{Bo07} and can be proven similarly. 

\begin{lemma} \label{lemma:ValleePoussin}
Let $\{\pi^\alpha\}$ be a collection of measures on some measurable space that is bounded in total variation norm. Let $f$ be a measurable function and suppose that there exists a non-negative non-decreasing function $\phi : \bR^+ \rightarrow \bR^+$ which satisfies $\lim_{r \rightarrow \infty} r^{-1}\phi(r) = \infty$ and for which it holds that $\sup_\alpha \int \phi(|f|) \dd \pi^\alpha \leq M < \infty$. Then it holds that
\begin{equation*}
\sup_\alpha \int |f| \dd \pi^\alpha < \infty.
\end{equation*}
Also, we obtain that 
\begin{equation} \label{equation:ValleePoussin}
\lim_{C \rightarrow \infty} \sup_{\alpha} \int \left|f - \Upsilon_C(f)\right|\dd \pi^\alpha  = 0,
\end{equation}
where $\Upsilon_C(f) = (f \vee -C) \wedge C$.
\end{lemma}

\begin{lemma} \label{lemma:norm_bounded_by_Lagrangian}
Under Condition \ref{condition:topology_on_D} (e) that states that for every $c \geq 0$: $\Gamma(c) := \sup_{f \in c \cN} \vn{Hf} < \infty$, there exists an increasing function $\phi : \bR^+ \rightarrow \bR^+$, such that $\lim_{r \rightarrow \infty} r^{-1}\phi(r) = \infty$ and such that $\phi(|\ip{f}{u}|) \leq \phi(\vn{u}_\cN) \leq \cL(\mu,u)$ for every $f \in \cN$, $u \in U$ and $\mu \in \cP(E)$.
\end{lemma}
The proof of this lemma is inspired by the proof of Lemma 10.21 in Feng and Kurtz \cite{FK06}.

\begin{proof}
For $u \neq 0$ in $U$, Lemma \ref{lemma:seminorm_is_achieved} yields
\begin{equation*}
\frac{\cL(\mu,u)}{\vn{u}_\cN}  \geq \sup_{f \in c \cN} \left\{\frac{\ip{f}{u}}{\vn{u}_\cN} - \frac{\ip{Hf}{\mu}}{\vn{u}_\cN} \right\}  \geq c - \frac{\Gamma(c)}{\vn{u}_\cN}
\end{equation*}
for every $c > 0$. This directly yields for every $c > 0$
\begin{equation*}
\lim_{r \rightarrow \infty} \inf_{\mu \in \cP(E)} \inf_{u \, : \, \vn{u}_\cN \geq r} \frac{\cL(\mu,u)}{\vn{u}_\cN}  \geq \lim_{r \rightarrow \infty} \inf_{\mu \in \cP(E)} \inf_{u \, : \, \vn{u}_\cN \geq r} c - \frac{\Gamma(c)}{\vn{u}_\cN} = c,
\end{equation*}
which implies
\begin{equation*}
\lim_{r \rightarrow \infty} \inf_{\mu \in \cP(E)} \inf_{u \, : \, \vn{u}_\cN \geq r} \frac{\cL(\mu,u)}{\vn{u}_\cN} = \infty.
\end{equation*}
Consequently, the function
\begin{equation*}
\phi(r) = r \inf_{\mu \in \cP(E)} \inf_{u \, : \, \vn{u}_\cN \geq r} \frac{\cL(\mu,u)}{\vn{u}_\cN}, 
\end{equation*}
satisfies the claims in the lemma.
\end{proof}

\subsection{Proof of Proposition \ref{prop:compacttimelevelsets}} \label{section:proof_compact_level_sets}

We now have the tools for the proof of Proposition \ref{prop:compacttimelevelsets}. Essentially, the proof follows the approach as in Feng and Kurtz \cite[Proposition 8.13]{FK06}. We give it for clarity and completeness as there are some notable differences. First of all, we work with absolutely continuous paths, instead of paths that satisfy a relaxed control equation. Second, the possible `speeds' that we allow are elements of the completely regular Souslin subset $U$ of a locally convex space instead of a metric space.

\begin{proof}[Proof of Proposition \ref{prop:compacttimelevelsets}] 
Pick a sequence $\mu^n \in \cK_{M}^T$. As $\cP(E)$ is compact, we assume that $\mu^n(0) \rightarrow \mu_0$. Define the occupation measures $\pi^n$ on $\cP(E) \times U \times [0,T] \subseteq \cP(E) \times U \times [0,T]$ by
\begin{equation*}
\pi^n(B \times [0,t]) = \int_0^t \bONE_B(\mu^n(s),\dot{\mu}^n(s)) \dd s.
\end{equation*}
Proposition \ref{Prop:FKcompactness}  tells us that $\pi^n$ is tight in $\cP\left(\cP(E) \times U \times [0,T]\right)$ by considering the following calculation:
\begin{multline*}
C \pi^n \left\{(\mu,u,t) \in \cP(E) \times U \times [0,T] \, \middle| \, \cL(\mu,u) \leq C  \right\}^c \\
\leq \int_0^T \cL(\mu,u) \pi^n(\dd \mu \times \dd u \times \dd s) \leq M.
\end{multline*}
In other words
\begin{equation} \label{eqn:compactset}
\pi^n \left\{(\mu,u,t) \in \cP(E)\times U \times [0,T] \, \middle| \, \cL(\mu,u) \leq C  \right\}^c \leq \frac{M}{C},
\end{equation}
and because $C$ is arbitrary, we can choose it big enough such that this probability is smaller then any $\varepsilon > 0$ uniformly in $n$. This implies by Theorem \ref{theorem:Prohorovtheorem} that $\pi^n$ contains a weakly converging subsequence.  Therefore, we assume without loss of generality that, there exists $\pi \in \cP(\hat{K} \times U \times [0,T])$ such that $\pi^n \rightarrow \pi$ weakly. 

\smallskip

We now show that $\pi$ gives us a new path $s \mapsto \mu(s)$ in $\cK_{M}^T$.
Recall that for $c \geq 0$ $\Upsilon_c(g) = (g \wedge c) \vee -c$. So for a fixed $f \in D$ (we can take $f \in \cN$ without loss of generality as $\cN$ is a barrel), $u \mapsto \Upsilon_c(\ip{f}{u})$ is a bounded and continuous function. For an arbitrary $t \leq T$, the set $\pi(\cP(E)\times U \times \{t\})$ is a set of measure $0$, so the function  $(u,s) \mapsto \bONE_{\{s \leq t\}} \Upsilon_c(\ip{f}{u})$ is a bounded Borel measurable functions that is continuous $\pi$ almost everywhere.

Hence, by the weak convergence of $\pi^n$ to $\pi$ and Corollary 8.4.2 in Bogachev \cite{Bo07}, we obtain for every $c \geq 0$ that
\begin{equation} \label{eqn:compactness_of_curves_eq1}
\int_{\{s \leq t\}} \Upsilon_c(\ip{f}{u}) \; \pi^n(\dd \mu \times \dd u \times \dd s) \rightarrow \int_{\{s \leq t\}} \Upsilon_c(\ip{f}{u}) \; \pi(\dd \mu \times \dd u \times \dd s).
\end{equation}

By the Portmanteau theorem and the lower semi-continuity of $\cL$, we obtain that
\begin{equation*}
\int \cL(\mu,u) \; \pi(\dd \mu \times \dd u \times \dd s) \leq \liminf_n \int \cL(\mu,u) \; \pi^n(\dd \mu \times \dd u \times \dd s) \leq M.
\end{equation*}
As $\phi(|\ip{f}{u}|) \leq \cL(\mu,u)$ by Lemma \ref{lemma:norm_bounded_by_Lagrangian}, and the fact that $\phi$ satisfies the conditions of Lemma \ref{lemma:ValleePoussin}, we use the result in \eqref{equation:ValleePoussin} to obtain that 
\begin{equation} \label{eqn:compactness_of_curves_eq2}
\sup_n \left|\int_{\{s \leq t\}} \ip{f}{u} \; \pi^n(\dd \mu \times \dd u \times \dd s) - \int_{\{s \leq t\}} \Upsilon_c(\ip{f}{u}) \; \pi^n(\dd \mu \times \dd u \times \dd s) \right|  \rightarrow 0,
\end{equation}
as $c \rightarrow \infty$. This also follows for the limiting measure $\pi$:
\begin{equation} \label{eqn:compactness_of_curves_eq3}
\left|\int_{\{s \leq t\}} \ip{f}{u} \; \pi(\dd \mu \times \dd u \times \dd s) - \int_{\{s \leq t\}}  \Upsilon_c(\ip{f}{u}) \; \pi(\dd \mu \times \dd u \times \dd s) \right|  \rightarrow 0.
\end{equation}

Using the triangle inequality, Equations \eqref{eqn:compactness_of_curves_eq1}, \eqref{eqn:compactness_of_curves_eq2} and \eqref{eqn:compactness_of_curves_eq3}, sending first $c$ and then $n$ to infinity, we get
\begin{equation} \label{eqn:linear_functions_converge}
\left| \int_{\{s \leq t\}} \ip{f}{u} \; \pi^n(\dd \mu \times \dd u \times \dd s) - \int_{\{s \leq t\}} \ip{f}{u} \; \pi(\dd \mu \times \dd u \times \dd s) \right| \rightarrow 0
\end{equation}

Fix some $0 \leq t \leq T$ and pick a sequence $0 \leq t_n \leq T$ that converges to $t$. Because $\mu^n(t_n)$ is a sequence in the compact set $\cP(E)$ it has a converging subsequence with limit $\nu$. By Lemmas \ref{lemma:ValleePoussin}, \ref{lemma:norm_bounded_by_Lagrangian}, and the Dominated convergence theorem, we have
\begin{equation*}
\lim_{n\rightarrow \infty} \int \bONE\{s \text{ between } t_n \text{ and } t\} |\ip{f}{u}|  \pi^n(\dd \mu \times \dd u \times \dd s) \rightarrow 0,
\end{equation*}
which implies, using Equation \eqref{eqn:linear_functions_converge}, that
\begin{align*}
\ip{f}{\nu} - \ip{f}{\mu_0} & = \lim_n \ip{f}{\mu^n(t_n)} - \ip{f}{\mu^n(0)} \\
& = \lim_n \int \bONE\{s \leq t\} \ip{f}{u}  \pi^n(\dd \mu \times \dd u \times \dd s) \\
& \qquad -  \int \bONE\{s \text{ between } t_n \text{ and } t\} \ip{f}{u}  \pi^n(\dd \mu \times \dd u \times \dd s) \\
& = \int \bONE\{s \leq t\} \ip{f}{u}  \pi(\dd \mu \times \dd u \times \dd s).
\end{align*}
As $D$ is dense in $C(E)$, this uniquely determines $\nu$, and for every sequence $s_n \rightarrow t$, one gets $\mu^n(s_n) \rightarrow \nu$ weakly. Therefore, we will denote $\mu(t) := \nu$. This way, we can construct $\mu(t)$ for a countable dense subset $J$ of $[0,T]$ and $\mu(t)$ is continuous on $J$. As a consequence, $\mu(t)$ extends continuously to $[0,t]$ and satisfies
\begin{equation*}
\ip{f}{\mu(t)} - \ip{f}{\mu_0} = \int \bONE_{\{s \leq t\}} \ip{f}{u} \pi(\dd \mu \times \dd u \times \dd s)
\end{equation*}
for every $f \in D$. This implies that for any sequence $s_n \rightarrow t$, we have $\mu(s_n) \rightarrow \mu(t)$, which yields that $\left\{\mu^n(t)\right\}_{0\leq t \leq T}$ converges to $\left\{\mu(t)\right\}_{0\leq t \leq T}$ in $C_{\cP(E)}([0,T])$.

\smallskip

We proceed with extracting the speed of the trajectory $s \mapsto \mu(s)$ from the measure $\pi$. Let $\hat{\pi}$ be the measure $\pi$ restricted to $U \times [0,T]$. By Corollary 10.4.6 in Bogachev \cite{Bo07}, we can write $\hat{\pi}(\dd u \times \dd s)$ as $\lambda_s(\dd u) \dd s$. 

For Lebesgue almost every $s$, we know that $\int |\ip{f}{u}| \lambda_s(\dd u ) < \infty$, so we can define the Gelfand integral $\bar{u}(s) = \int u \lambda_s(\dd u)$, see Theorem \ref{theorem:Gelfandrepresentation}. We show that $\bar{u}(s) = \dot{\mu}(s)$. First, by the measurability of $s \mapsto \lambda_s$, also $s \mapsto \bar{u}$ is measurable. Second, by Jensen's inequality in the first line, and the lower semi-continuity of $\cL$ in the third,
\begin{align*}
\int_0^T |\ip{f}{\bar{u}(s)}| \dd s & \leq \int |\ip{f}{u}| \pi(\dd \mu \times \dd u \times \dd s) \\
& \leq T(\vn{Hf} \vee \vn{H(-f)}) + \int \cL(\mu,u) \pi(\dd \mu \times \dd u \times \dd s) \\
& \leq  T(\vn{Hf} \vee \vn{H(-f)}) + \liminf_n \int \cL(\mu,u) \pi^n(\dd \mu \times \dd u \times \dd s) \\
& \leq  T(\vn{Hf} \vee \vn{H(-f)}) + M.
\end{align*}
Last,
\begin{multline*}
\ip{f}{\mu(t)} - \ip{f}{\mu(0)}  = \int \bONE_{\{s \leq t\}} \ip{f}{u} \pi(\dd \mu \times \dd u \times \dd s) \\
= \int_0^t \int \ip{f}{u}\lambda_s(\dd u) \dd s  = \int_0^t \ip{f}{\bar{u}(s)} \dd s.
\end{multline*}
This means that $\mu \in \cA\cC^T$ and $\dot{\mu} = \bar{u}$. 

We still need to show that $\mu \in \cK_{M}^T$ by showing that its Lagrangian cost is bounded by $M$. By the construction of the path $s \mapsto \mu(s)$, it is clear that we have $\pi(\dd \mu \times \dd u \times \dd s) = \bONE_{\{s \leq T\}} \delta_{\{\mu(s)\}}(\dd \mu)\lambda_s(\dd u) \dd s$.
This shows, using the convexity of $\cL$ in the second line, and lower semi-continuity of $\cL$ in the third line, that
\begin{align*}
\int_0^T \cL(\mu(s),\dot{\mu}(s)) \dd s & = \int \cL(\mu,u) \bONE\{s \leq T\}  \delta_{\mu(s)}(\dd \mu) \delta_{\bar{u}(s)}(\dd u) \dd s \\
& \leq \int  \cL(\mu,u) \bONE\{s \leq T\} \delta_{\mu(s)}(\dd \mu) \lambda_s(\dd u) \dd s \\
& \leq \liminf_n \int_0^T \cL(\mu^n(s),\dot{\mu}^n(s)) \dd s \leq M.
\end{align*}
So indeed $\cK_{M}^T$ is compact in $C_{\cP(E)}(\bR^+)$.
\end{proof}

\subsection{Proof of Proposition \ref{proposition:finite_lagrangian_cost_implies_strong_abs_cont}} \label{section:proof_abs_cont}

\begin{proof}[Proof of Proposition \ref{proposition:finite_lagrangian_cost_implies_strong_abs_cont}]

We start with the proof of (a). Let $\gamma \in \cA\cC$ be strongly absolutely continuous. Let $H : [0,\infty) \rightarrow \bR$ be the absolutely continuous function such that $\sup_{f \in \cN} \left|\ip{f}{\gamma(t)} - \ip{f}{\gamma(s)} \right| \leq \left| H(t) - H(s) \right|$. We conclude that for all $f \in \cN$, and thus for all $f \in D$ that $t \mapsto \ip{f}{\gamma(t)}$ is absolutely continuous.
Following the proof of Lemma 4.2 in \cite{DG87}, with $\cN$ instead of the collection of neighbourhoods $U_{K_n}$, we conclude that there exists a weakly measurable trajectory $s \mapsto u(s)$ in $D'$ such that for all $f$ in a countable dense subset of $D$ and for all $t$ in a subset of full measure it holds that $\frac{\dd}{\dd t} \ip{f}{\gamma(t)} = \ip{f}{u(t)}$.

By absolute continuity of $t \mapsto \ip{f}{\gamma(t)}$, we also know that $\int_0^t |\ip{f}{u(s)}| \dd s < \infty$ and additionally, 
\begin{equation*}
\ip{f}{\gamma(t)} - \ip{f}{\gamma(0)} = \int_0^t \ip{f}{u(s)} \dd s \qquad \forall \, f \in D, \forall \, t \geq 0.
\end{equation*}
We conclude that $\gamma \in \cA\cC$.

\smallskip

We proceed with the proof of (b). Let $\gamma \in \cA\cC$ be absolutely continuous and such that $\int_0^\infty \cL(\gamma(s),\dot{\gamma}(s)) \dd s < \infty$. Let $\phi$ be the function introduced in Lemma \ref{lemma:norm_bounded_by_Lagrangian}. Denote by $\psi := \phi^{-1}$. An elementary computation shows that $\lim_{r \rightarrow \infty} r^{-1}\psi(r)= 0$. By Lemma \ref{lemma:norm_bounded_by_Lagrangian}, we obtain that $\sup_{f \in \cN} \left|\ip{f}{u} \right| \leq \psi(\cL(\mu,u))$ for all $\mu \in \cP(E)$ and $\nu \in D'$.

\smallskip

By Condition $\ref{condition:topology_on_D}$ the space $(D,\tau_D)$ is separable. Thus, we can find a $\tau_D$ dense sequence of functions $\{f_n\}_{n \geq 1}$ in $\cN$ such that
\begin{equation*}
\sup_{f \in \cN} |\ip{f}{\gamma(s)} | = \sup_n  |\ip{f_n}{\gamma(s)}|.
\end{equation*}
We conclude that the function $h : [0,\infty) \rightarrow [0,\infty]$ defined by $h(s) := \sup_{f \in \cN} |\ip{f}{\gamma(s)}|$ is measurable. In fact $h$ is locally integrable because
\begin{align*}
\int_0^T h(s) \dd s & = \int_0^T  \sup_{f \in \cN} |\ip{f}{\gamma(s)}| \dd s \leq \int_0^T \psi(\cL(\gamma(s),\dot{\gamma}(s))) \dd s \\
& = \int_0^T (1 \vee \cL(\gamma(s),\dot{\gamma}(s)))\frac{\psi(\cL(\gamma(s),\dot{\gamma}(s)))}{1 \vee \cL(\gamma(s),\dot{\gamma}(s))} \dd s \\
& \leq \int_0^T (1 \vee \cL(\gamma(s),\dot{\gamma}(s))) M \dd s < \infty,
\end{align*}
where $M := \sup_{r \geq 1} r^{-1}\psi(r)$. Put $H(t) = \int_0^t h(s) \dd s$, which is a non-decreasing absolutely continuous function by construction. We find by the fact that $\gamma \in \cA \cC$ that for $s \leq t$ and $f \in \cN$
\begin{equation*}
\left|\ip{f}{\gamma(t)} - \ip{f}{\gamma(s)} \right| \leq \int_s^t \left| \ip{f}{\dot{\gamma}(r)} \right| \dd r \leq \int_s^t  h(r) \dd r = H(t) - H(s).
\end{equation*}
\end{proof}

\section{Examples} \label{section:examples}

We give a number of examples on which Theorem \ref{The:LDP2} can be applied. We start by considering degenerate diffusion processes on $\bR^d$. We proceed with Lévy processes on $\bR^d$. Third, we consider  Markov jump process with bounded jump rates on a locally compact separable metric space. Finally, we consider interacting particle systems\cite{Li85}. In this final case, we also prove a representation theorem for $D'$.

\subsection{Diffusion processes on \texorpdfstring{$\bR^d$}{Rd}} \label{section:diffusion_processes}

We now show that our result partly extends the large deviation result of Dawson and Gärtner theorem \cite{DG87} for the empirical density of $n$ non-interacting particles. 

We start by introducing a topology on $C_c^\infty(\bR^d)$ that is well known in the theory of distributions. Let $K_1 \subseteq K_2 \subseteq \dots$ be a sequence of compact sets in $\bR^d$ such each $K_n$ is contained in the interior of $K_{n+1}$ and such that $\bigcup_n K_n = \bR^d$.

\smallskip

Let $p = (p_1,\dots, p_d)$ be a multi-index and define $|p| = \sum p_i$. Denote by
\begin{equation*}
\left(\frac{\partial}{\partial x}\right)^p f(x) = \sum_{i = 1}^d \left(\frac{\partial}{\partial x^i}\right)^{p_i} f(x),
\end{equation*}
where $x = (x^1,\dots,x^d)$ are the standard Euclidean coordinates.

Consider the spaces $C^\infty_0(K_n)$ of smooth functions on $\bR^d$ that are supported in $K_n$ and equip it with the Fr\'{e}chet topology $\tau_{n}$ generated by all semi-norms of the type
\begin{equation*}
\vn{f}_{K_n,m} = \sum_{p: \, |p| \leq m} \sup_{x \in K_n} \left|\left(\frac{\partial}{\partial x}\right)^p f(x) \right|.
\end{equation*}

Finally, we equip $C_c^\infty(\bR^d)$ with the limit Fr\'{e}chet (LF) topology $\tau$, see for example Chapter 13 in \cite{Tr67}.

\begin{definition}
The LF topology $\tau$ on $C_c^\infty(\bR^d)$ is generated by the collection of convex sets $U$ containing $0$ such that $U \cap C_0^\infty(K_n)$ is a open neighbourhood of $0$ in $\tau_n$ for all $n$.
\end{definition}

The space $(C_c^\infty(\bR^d),\tau)$ is well known as the space of test functions. Its continuous dual space is the space of distributions. We proceed with the relation of absolute continuity in the sense of Definition 4.1 in \cite{DG87}, which has also been used in \cite{DjKa95,Le95}, with strong and weak absolute continuity in the sense of Definitions \ref{definition:absolutely_continuous} and \ref{definition:strongly_absolutely_continuous}. 

\begin{lemma} \label{lemma:absolutely_continuous_DG}
Let $\gamma \in C_\cP(E)[0,T]$. Then (a) implies (b) implies (c).
\begin{enumerate}[(a)]
\item $\gamma$ is strongly absolutely continuous in the sense of Definition \ref{definition:strongly_absolutely_continuous}.
\item $\gamma$ is absolutely continuous in the sense of Definition 4.1 in \cite{DG87}. For all compact sets $K$ there exists a $\tau$-neighbourhood $U_K \subseteq C_0^\infty(K)$ of $0$ and an absolutely continuous function $H_K : [0,T] \rightarrow \bR$ such that
\begin{equation} \label{eqn:abs_cont_DG}
|\ip{f}{\gamma(t)} - \ip{f}{\gamma(s)}| \leq |H_K(t) - H_K(s)|
\end{equation}
for all $s,t \in [0,T]$ and $f \in U_K$.
\item $\gamma$ is absolutely continuous in the sense of Definition \ref{definition:absolutely_continuous}
\end{enumerate}
\end{lemma}

Thus, if the trajectory has finite Lagrangian cost, all three notions are equivalent by Proposition \ref{proposition:finite_lagrangian_cost_implies_strong_abs_cont}

\begin{proof}[Proof of Lemma \ref{lemma:absolutely_continuous_DG}]
Let $\gamma$ satisfy (a). Recall that the inductive limit topology $\tau$ induces the Fr\'{e}chet topology $\tau_{n}$ on $C_0^\infty(K_n)$ for every $n$. Thus (b) is satisfied by taking $U_K = \cN \cap C_0(K)$ and $H_K = H$ for any compact set $K \subseteq \bR^d$. Recall for this argument that the inductive limit topology $\tau$ induces the Fr\'{e}chet topology $\tau_{n}$

Let $\gamma$ satisfy (b). Fix $f \in C_c^\infty(\bR^d)$. Suppose without loss of generality that $f \in C_0^\infty(K)$. It is not immediately clear that $f \in U_K$. However, in a locally convex space, one can always find a barrel $B \subseteq U_K$, cf. Proposition 7.2 in \cite{Tr67}. Because barrels are absorbing, there is some $\lambda > 0$ such that $f \in \lambda B \subseteq \lambda U_K$. We conclude that \eqref{eqn:abs_cont_DG} holds for $f$ with $\lambda H_K$ instead of $H_K$.

This means that $t \mapsto \ip{f}{\gamma(t)}$ is absolutely continuous. We conclude that it is differentiable almost everywhere. By Lemma 4.2 in \cite{DG87}, this derivative equals $\ip{f}{\dot{\gamma}}$ almost everywhere. We conclude that (c) is satisfied.
\end{proof}

As we will use this space for diffusion and for Lévy processes, we check the process-independent conditions for the large deviation theorem directly.

\begin{lemma} \label{lemma:properties_test_functions}
Conditions \ref{condition:D_algebra_closed_smooth} and \ref{condition:topology_on_D} (a)-(c) are satisfied for $(C_c^\infty(\bR^d),\tau)$.
\end{lemma}

\begin{proof}
It is clear that $C_c^\infty(\bR^d)$ is an algebra that is closed under composition with smooth functions. Additionally, it is clear that these operations are continuous for $\tau$.

\smallskip

We proceed with proving \ref{condition:topology_on_D} (a). By Corollary 33.3  in \cite{Tr67} the space $(C_c^\infty(\bR^d),\tau)$ is barrelled. By the Remark following Proposition A.9 in \cite{Tr67} the space $(C_c^\infty(\bR^d),\tau)$ is Souslin, which in particular implies that it is separable.

\smallskip

We are left to prove \ref{condition:topology_on_D} (b). We have to prove that the embedding $\iota : C_c^\infty(\bR^d) \rightarrow C_0(\bR^d)$ is $\tau$ to $\vn{\cdot}$ continuous. By Proposition 14.7 in \cite{Tr67} it suffices to prove sequential continuity. Furthermore, by Corollary 14.1 in \cite{Tr67} a sequence in $(C_c^\infty(\bR^d),\tau)$ converges if and only if it is contained in $(C^\infty_0(K_n),\tau_n)$ and converges there. Thus the sequential continuity and thus continuity of $\iota$ follows.
\end{proof}

We proceed with the large deviations of diffusion processes on $\bR^d$. Let $S^d$ be the space of $d \times d$ nonnegative-definite matrices. We give two generation theorems for diffusion processes with generator $(A,C_c^\infty(\bR^d))$ defined by
\begin{equation} \label{eqn:diffusion_generator}
Af(x) = \frac{1}{2}\sum_{i,j = 1}^d a^{ij}(x) \frac{\partial^2 f}{\partial x^i \partial x^j}(x) + \sum_{i=1}^d b^i(x)\frac{\partial f}{\partial x^i}(x).
\end{equation}

The first theorem considers non-degenerate diffusion matrices, the second one considers degenerate diffusion matrices. 

\begin{theorem}[Theorem 8.1.7 in \cite{EK86}] \label{theorem:construction_of_diffusion_processes1}
Let $a : \bR^d \rightarrow S^d$ and $b : \bR^d \rightarrow \bR^d$ be bounded. Let $0 < \mu \leq 1$, $K > 0$ and suppose that
\begin{equation*}
|a(x) - a(y)| + |b(x) - b(y)| \leq K|x-y|^\mu,
\end{equation*}
and
\begin{equation*}
\inf_{x \in \bR^d} \inf_{|\theta| = 1} \ip{\theta}{a(x) \theta} > 0.
\end{equation*}
Then the closure of $(A,C_c^\infty(\bR^d))$ defined in \eqref{eqn:diffusion_generator} generates a Feller process.
\end{theorem}

\begin{theorem}[Theorem 8.2.5 in \cite{EK86}] \label{theorem:construction_of_diffusion_processes2}
Let $a : \bR^d \rightarrow S^d$ be such that $x \mapsto a^{ij}(x)$ is twice continuously differentiable(possible non-bounded) and such that for all $i,j,k,l$ the map $x \mapsto \frac{\partial^2 a^{ij}}{\partial x^k \partial x^l} (x)$ is bounded. Let $b : \bR^d \rightarrow \bR$ be Lipschitz continuous. Then the closure of $(A,C_c^\infty(\bR^d))$ defined in \eqref{eqn:diffusion_generator} generates a Feller process.
\end{theorem}

An elementary calculation yields
\begin{equation} \label{equation:definition_H_diffusion}
H f(x) = A f(x) + \frac{1}{2} \sum_{i,j} a^{ij}(x) \frac{\partial f}{\partial x^i}(x) \frac{\partial f}{\partial x^j}(x).
\end{equation}

We check the conditions for the large deviation result.

\begin{theorem} \label{theorem:condition_diffusion}
Let $A$ satisfy the conditions Theorem \ref{theorem:construction_of_diffusion_processes1} or Theorem \ref{theorem:construction_of_diffusion_processes2}. Then $(C_c^\infty(\bR^d),\tau)$ and $H$ satisfy Conditions and \ref{condition:D_algebra_closed_smooth} and \ref{condition:topology_on_D}. As a consequence, Theorems \ref{The:LDP1} and \ref{The:LDP2} hold for iid copies of a diffusion process with generator $A$.
\end{theorem}

Below, in Proposition \ref{proposition:representation_of_rate_function_diffusions}, we give a representation of the rate function in terms of an inverse Sobolev space norm. 

Compared to the result for the trajectories of copies of independent diffusion processes of Dawson and Gärtner, there are three main differences.
\begin{enumerate}[(a)]
\item Our results only hold in the time-homogeneous setting.
\item We are not restricted to the case where the diffusion matrices are positive definite.
\item The assumption that the compactly supported smooth functions are a core is more stringent than the condition that the martingale problem is well-posed. 
\end{enumerate}

Then there is the result by Feng and Kurtz, Section 13.3 in \cite{FK06}, where the large deviation principle is established under the condition that the drift term is twice continuously differentiable, is semi-convex, and grows sufficiently fast at infinity. The diffusion matrix is assumed to be the identity matrix. Under these conditions it is shown that the trajectories satisfy the large deviation principle in $D_{\cP^2(E)}(\bR^+)$, where $\cP^2(E)$ is the space of probability measures on $E$ with bounded second moments equipped with the Kantorovich-Wasserstein 2-metric.

Restricting to the non-interacting case of the large deviation result in \cite{BDF12}, it is not clear to the author how the strong assumption of having of having a core relates to strong uniqueness of solutions to the $N$-particle model
\begin{equation*}
\dd X^{N,i}(t) = b(X^{N,i}(t)) \dd t + \sigma(X^{N,i}(t)) \dd W^i(t), 
\end{equation*} 
which is one of the assumptions in \cite{BDF12}. Note that in this paper the large deviation principle is established in the more complex space $\cP(D_{\bR^d}([0,T]))$ as opposed to in $D_{\cP(\bR^d)}([0,T])$ or $D_{\cP(\bR^d)}(\bR^d)$ in \cite{DG87,FK06} and this paper.

Also the restriction of the results to the non-interacting case in \cite{FaMa14} hold on the space $\cP(D_{\bR^d}[0,T])$. Compared to \cite{BDF12} only weak uniqueness is necessary, but on the other hand it is assumed that the diffusion matrices are non-degenerate.

\smallskip 

The results of \cite{DG87,FK06,BDF12,FaMa14} are all more general in the sense that they all hold also for weakly interacting systems.

\begin{proof}[Proof of Theorem \ref{theorem:condition_diffusion}]
By Lemma \ref{lemma:properties_test_functions}, we only have to check Condition \ref{condition:topology_on_D} (d) and (e). By definition of the topology $\tau$ on $C_c^\infty(\bR^d)$, (d) is clear. For (e), define the $\tau$-continuous and convex functions
\begin{multline*}
|f|_{n} := d \sup_{x \in K_n}\sup_i |b^i(x)| \left|\frac{\partial f}{\partial x^i}(x)\right| \\
+ \; \frac{d^2}{2} \sup_{x \in K_n} \sup_{i,j} |a^{ij}(x)| \left(\left|\frac{\partial^2 f}{\partial x^i \partial x^j}(x)\right|+ \left|\frac{\partial f}{\partial x^i}(x)\right| \left|\frac{\partial f}{\partial x^j}(x)\right| \right),
\end{multline*}
and the set $\cN := \left\{f \in C_c^\infty(\bR^d) \, \middle| \, \forall \, n \geq 1, \text{ we have } |f|_n \leq 1\right\}$. Clearly, $\cN$ is closed, convex and balanced. Because $\cN$ is balanced, convex, and $0 \in \cN$, it follows that to prove that $\cN$ is absorbing, it is sufficient to prove that for every $f \in C_c^\infty(\bR^d)$ there exists $\alpha > 0$ such that $\alpha f \in \cN$.

\smallskip

Consider $f \in C_c^\infty(\bR^d)$. By $\tau$-continuity, we obtain that if $\alpha \rightarrow 0$, then $|\alpha f|_n \rightarrow 0$ for all $n \geq 1$. Because there is some $m$ such that $f \in C^\infty_0(K_m)$, we conclude that for $n \geq m$ we have $|f|_{n} = |f|_m$. Thus, we find that $\sup_n |\alpha f|_n \rightarrow 0$ as $\alpha \rightarrow 0$. We conclude that there is some $\alpha > 0$ such that $\alpha f \in \cN$ and, hence,  that $\cN$ is a barrel.

\smallskip

Consider $c \geq 0$, we prove that $\sup_{f \in c \cN} \vn{Hf} < \infty$. Pick some $g \in \cN$. Then there is some $m$ such that $g \in C_0^\infty(K_m) \cap c\cN$. Thus
\begin{equation*}
\vn{Hg} \leq |cg|_m \leq (1 \vee c)^2 |g|_m \leq (1\vee c)^2.
\end{equation*}
Thus, $\sup_{f \in c \cN} \vn{Hf} \leq (1 \vee c)^2$, which establishes Condition \ref{condition:topology_on_D} (e).
\end{proof}

\subsection{A representation of the Lagrangian in terms of an inverse Sobolev space norm} \label{subsec:rep_of_Lagrangian_in_inverse_sobolev_norm}

Let $(x^1,\dots, x^d)$ be the global Euclidean coordinates. For $f \in C_c^\infty(\bR^d)$, define the Riemannian gradient induced by the diffusion matrices $a^{ij}(\cdot)$:
\begin{equation*}
(\nabla f)^i = \sum_{j=1}^d a^{ij}(\cdot) \frac{\partial f}{\partial x^j},
\end{equation*}
for $i \in \{1,\dots,d\}$. If the matrices would be positive definite, the matrices $a^{ij}$ would be invertible with inverses $a_{ij}$. The associated Riemannian inner product for tangent vectors in $T_x \bR^d$ would be
\begin{equation*}
[X,Y]_x = \sum_{i,j=1}^d a_{ij}(x) X^i Y^j,
\end{equation*}
which induces a norm on the tangent space $T_x \bR^d$: $|X|_x = \sqrt{[X,X]_x}$. We conclude that the norm of the gradient of $f$ equals
\begin{equation*}
|\nabla f|_x^2 = \sum_{i,j = 1}^d a^{ij}(x) \frac{\partial f}{\partial x^i}(x)\frac{\partial f}{\partial x^j}(x),
\end{equation*}
which is a formula that also makes sense if the matrix $a^{ij}$ is degenerate.

Define the semi-norm $\vn{f}_\mu^2 := \ip{|\nabla f|^2}{\mu}$ and the Sobolev space $H^1(\mu,\nabla)$, by identifying all functions $f \in C_c^\infty(\bR^d)$ such that $\vn{f-g}_\mu = 0$, and then completing it by using the norm $\vn{\cdot}_\mu$. 
For $\alpha \in C_c^\infty(\bR^d)'$, define the dual norm
\begin{equation*}
\vn{\alpha}_{-1,\mu} = \sup_{\substack{f \in C_c^\infty \\ \vn{f}_\mu \leq 1}} \ip{f}{\alpha} = \sup_{f \in C_c^\infty(\bR^d)} \left\{\ip{f}{\alpha} - \frac{1}{2}\vn{f}_\mu^2 \right\}.
\end{equation*}

The next proposition shows the connection between Theorem \ref{The:LDP2} and Theorem 4.5 by Dawson and Gärtner \cite{DG87}.
\begin{proposition} \label{proposition:representation_of_rate_function_diffusions}
Let $\mu \in \cP(E)$ and let $\alpha \in C_c^\infty(\bR^d)'$, then $\cL(\mu,\alpha) = \frac{1}{2} \vn{\alpha - A'(\mu)}^2_{-1,\mu}$.
\end{proposition}

\begin{proof}
Pick $\mu \in \cP(\bR^d)$ and $\alpha \in C_c^\infty(\bR^d)'$. By \eqref{equation:definition_H_diffusion}, we find
\begin{equation*}
\cL(\mu,\alpha)   = \sup_{f \in C_c^\infty(\bR^d)} \left\{\ip{f}{\alpha} - \ip{Af}{\mu} -\frac{1}{2}\ip{|\nabla f|^2}{\mu}  \right\}  = \frac{1}{2} \vn{\alpha - A'(\mu)}_{-1,\mu}^2.
\end{equation*}
\end{proof}

\subsection{Lévy processes} \label{subsec:Levy}

For our second example, we consider the large deviations of Lévy processes. Let $\PR \in D_{\bR^d}(\bR^+)$ be the law of a $\bR^d$ valued Lévy process. Because the law of $X(1)$ is infinitely divisible, the Lévy-Khintchine formula, Theorem 8.1 in \cite{Sa99}, and Corollary 11.6, \cite{Sa99}, show that there is a one-to-one correspondence between Lévy processes, the law of the process at time $1$, and triplets $(a,\nu,\gamma)$. Here $a \in S^d$, $\nu$ be a non-negative Borel measure on $\bR^d$ satisfying
\begin{equation} \label{eqn:Levy_conditions_on_jump_measure}
\nu(\{0\}) = 0, \qquad \int (|x|^2 \wedge 1) \nu(\dd x) < \infty, \qquad \gamma \in \bR^d.
\end{equation}

\begin{theorem}[Theorem 31.5 \cite{Sa99}]
Let $X$ be a L\'{e}vy process on $\bR^d$ with generating triplet $(a,\nu,\gamma)$. Then $X$ is a strong Feller process with strongly continuous semigroup $\{S(t)\}_{t \geq 0}$ on $C_0(\bR^d)$ and the generator $(A,\cD(A))$ of $\{S(t)\}_{t \geq 0}$ satisfies $C_0^2(\bR^d) \subseteq \cD(A)$ and for $f \in C_0^2(\bR^d)$:
\begin{multline*}
Af(x) = \frac{1}{2} \sum_{i,j = 1}^d a^{ij} \frac{\partial^2 f}{\partial x^i \partial x^j}(x) + \sum_{i = 1}^d \gamma_i \frac{\partial f}{\partial x^i}(x) \\
+ \int\left(f(x+y) - f(x) - \sum_{i = 1}^d y_i \frac{\partial f}{\partial x^i}(x) \bONE_D(y) \right) \nu(\dd y),
\end{multline*}
where $D = \{x \, | \, |x| \leq 1\}$. The set $C_c^\infty(\bR^d)$ is a core for $(A,\cD(A))$.
\end{theorem}

The assumption on $\nu$ is necessary for the integral in the definition of $A$ to be well-defined. For $f \in C_c^\infty(\bR^d)$ with support in a compact set $K \subseteq \bR^d$, a second-order Taylor expansion of the $f(x+y)$ in $y$ yields that
\begin{align*}
\left|f(x+y) - f(x) - \sum_{i = 1}^d y_i \frac{\partial f}{\partial x^i}(x)\right| & \leq \frac{1}{2}\sum_{i,j = 1}^d \left|y_i y_j \frac{\partial^2 f}{\partial x^i \partial x^j}(x)\right| + \theta(\vn{f}_{3,K}) R(y) \\
& \leq \frac{\vn{f}_{2,K}}{2}d |y|^2 + \theta(\vn{f}_{3,K}) |y|^2 \frac{R(y)}{|y|^2},
\end{align*}
where $\theta$ is some continuous non-negative function and where $y \mapsto R(y)$ is continuous, non-negative, and satisfies $\lim_{|y| \rightarrow 0} R(y)|y|^{-2} = 0$. Thus, by \eqref{eqn:Levy_conditions_on_jump_measure}, the integral in the definition of $A$ is well-defined. Additionally, it shows that $f \mapsto Af$ is continuous from $(C_c^\infty(\bR^d))$ to $(C_0(\bR^d), \vn{\cdot})$.

A straightforward calculations shows that for $f \in C_c^\infty(\bR^d)$
\begin{multline*}
Hf(x) = \frac{1}{2} \sum_{i,j = 1}^d a^{ij} \frac{\partial^2 f}{\partial x^i \partial x^j}(x) + \sum_{i = 1}^d \gamma_i \frac{\partial f}{\partial x^i}(x) + \frac{1}{2} \sum_{i,j} a^{ij} \frac{\partial f}{\partial x^i}(x) \frac{\partial f}{\partial x^j}(x) \\
+ \int\left(e^{f(x+y) - f(x)} - 1 - \sum_{i = 1}^d y_i \frac{\partial f}{\partial x^i}(x) \bONE_D(y) \right) \nu(\dd y).
\end{multline*}
An argument based on the Taylor expansion of the exponential shows that also here the integral is well-defined. For $f \in C_c^\infty(\bR^d)$ supported on the compact set $K$:
\begin{equation*}
\left|e^{f(x+y) - f(x)} - 1 - \sum_{i = 1}^d y_i \frac{\partial f}{\partial x^i}(x)\right| \leq \vn{f}_{2,K}d |y|^2 + \theta_2(\vn{f}_{3,K}) |y|^2 \frac{R(y)}{|y|^2},
\end{equation*}
where $\theta_2$ is a non-negative continuous function and $R$ is the same non-negative continuous function as above.

\begin{theorem} \label{theorem:condition_Levy}
$(C_c^\infty(\bR^d),\tau)$ and $H$ satisfy Conditions and \ref{condition:D_algebra_closed_smooth} and \ref{condition:topology_on_D}. As a consequence, Theorems \ref{The:LDP1} and \ref{The:LDP2} hold for iid copies of a Lévy process.
\end{theorem}

To find a suitable barrel for Condition \ref{condition:topology_on_D} (e), note that $\left|e^{a - b} - 1\right| \leq \left|a-b \right| e^{|a-b|}$.

\begin{proof}
By Lemma \ref{lemma:properties_test_functions}, we are left to verify Condition \ref{condition:topology_on_D} (d) and (e). (d) we already proved above. For (e), we define the $\tau$-continuous and convex functions
\begin{multline*}
|f|_{n} := |\gamma|\vn{f}_{1,K_n} + 2\vn{f}_{2,K_n} \sum_{ij} a^{ij} + \int_{D^c} 2\vn{f}_{0,K_n} e^{2 \vn{f}_{0,K_n}}  \, \nu(\dd y) \\
+ \int_D  |y|^2\left[d\vn{f}_{2,K_n} + \theta_2(\vn{f}_{3,K_n}) \frac{R(y)}{y} \right] \nu(\dd y)
\end{multline*}
and the set
\begin{equation*}
\cN := \left\{f \in C_c^\infty(\bR^d) \, \middle| \, \forall \, n \geq 1, \text{ we have } |f|_n \leq 1\right\}.
\end{equation*}
The proof that $\cN$ is a barrel follows as in the proof of Theorem \ref{theorem:condition_diffusion}.

Consider $c \geq 0$ and pick some $g \in c\cN$. Then, there is some $m$ such that $g \in C_0^\infty(K_m) \cap c\cN$. As in the proof of Theorem \ref{theorem:condition_diffusion}, we have
\begin{equation*}
\vn{Hg} \leq |cg|_m \leq \Gamma_{\nu}(c),
\end{equation*}
where $\Gamma_\nu : [0,\infty) \rightarrow [0,\infty)$ is some increasing function that depends on $\theta_2$, and $\nu$. Thus, $\sup_{f \in c \cN} \vn{Hf} \leq \Gamma_{\nu}(c)$, which proves Condition \ref{condition:topology_on_D} (e).
\end{proof}

The relevant information for the bound in terms of $\Gamma_\nu$ is that $\nu$ is independent of the coordinate $x$. We will see below that if the jump kernel $\nu$ is $x$ dependent and unbounded, this does not work.

\subsection{Markov pure jump process} \label{section:example_Markov_jump_processes}

We consider Markov processes on a locally compact separable metric space with a generator of the form
\begin{equation*}
Af(x) = \lambda(x) \int \left[f(y) - f(x)\right] \mu(x,\dd y),
\end{equation*}
where $\mu$ is a transition function from $E$ to $E$. In the setting that $\lambda = 1$ and that $x \mapsto \mu(x,\cdot)$ is continuous from $(E,d)$ to $(\cP(E),weak)$, it is immediate that $A$ generates a Feller process and that Conditions \ref{condition:D_algebra_closed_smooth} and \ref{condition:topology_on_D} are satisfied with $(D,\tau_D) = (C_0(E),\vn{\cdot})$. We summarize this as a proposition.

\begin{proposition}
Theorem \ref{The:LDP2} holds with $(D,\tau_D) = (C_0(E),\vn{\cdot})$ for Markov jump processes on a locally compact space if $\lambda$ is bounded.
\end{proposition}

The condition that $\lambda$ is bounded corresponds to Assumption 1 in \cite{DjKa95}, which is proven for jump processes on $\bR^d$, but with weak time-dependent interaction. 
A representation or the rate function in terms of Orlicz norms like in Theorem 3.1 in \cite{Le95}. We will not reproduce the proof here as it significantly longer than the counterpart for diffusion processes. Also see \cite{Le95b,Leo01} for more details on this proof.

\smallskip

In the setting that the kernel $\lambda$ is unbounded needs more care. One such setting is treated below in Section \ref{subsec:IPS}. In the setting of Theorem 8.3.1 in \cite{EK86}, however, our method seems to fail even if we consider an LF topology like in Section \ref{section:diffusion_processes}.

\subsection{Interacting particle systems} \label{subsec:IPS}

To show that our approach works in a wide range of contexts, we consider also consider interacting particle systems as defined in \cite{Li85}. Let $W$ be a compact metric space and let $S$ be a countable set. Define $(E = W^S ,d)$, the product space with $d$ a metric that is compatible with the topology, on which we will define a Markov process $\left\{\eta(t) \right\}_{t \geq 0}$. Examples are the exclusion process, the contact process, etcetera. We follow the construction in Liggett \cite{Li85}.

\smallskip

For $\Lambda$ a finite subset of $S$ and $\zeta \in W^\Lambda$ let $c_\Lambda(\eta,\dd \zeta)$ be the rate at which the system makes a transformation from configuration $\eta$ to $\eta^\zeta$ which is defined by
\begin{equation*}
\eta^\zeta_x =
  \begin{cases}
   \eta_x   & \text{if } x \notin \Lambda, \\
   \zeta_x  & \text{if } x \in \Lambda.
  \end{cases}
\end{equation*}
We assume that $c_\Lambda(\eta,\dd \zeta)$ is weakly continuous in the first variable. Because the total rate of jumps could be infinite, we need to specify a class of test functions for the generator. For $f \in C(E)$, define
\begin{equation*}
\Delta_f(x) = \sup \left\{|f(\eta) - f(\zeta)| \, \middle| \, \text{ for } y \neq x: \; \eta_y = \zeta_y \right\}
\end{equation*}
the variation of $f$ at $x \in S$. The natural space of test functions is given by
\begin{equation*}
D = \left\{f \in C_b(E) \, \middle| \, \tn{f} := \sum_{x \in S} \Delta_f(x) < \infty \right\}. \label{eqn:defD}
\end{equation*}
For functions $f \in D$, define the formal generator $A$ to be
\begin{equation} \label{eqn:defL}
Af(\eta) = \sum_\Lambda \int c_\Lambda(\eta,\dd \zeta) \left[f(\eta^\zeta) - f(\eta)\right].
\end{equation}
Note that the total jump-rate in this generator could be infinite. Theorem I.3.9 in \cite{Li85} gives conditions under which $A$ generates a Feller semigroup $\{S(t)\}_{t \geq 0}$ and Markov process $ (\eta(t))_{t \geq 0}$. One of the main conditions in this theorem is
\begin{equation} \label{assumption:Liggett3.3}
|A|_D := \sup_x \sum_{\Lambda \ni x} c_\Lambda   < \infty,
\end{equation}
where $c_\Lambda = \sup\{c_\Lambda(\eta,W^\Lambda) \, | \, \eta \in E\}$ is the maximal total variation of $c_\Lambda(\eta,\cdot)$. This condition implies that the sum in \eqref{eqn:defL} is uniformly convergent and that for $f\in D$:
\begin{equation} \label{assumption:Liggett3.3_corollary}
\vn{Af} \leq |A|_D \tn{f}.
\end{equation}

The same condition implies that $A^g$ and $H$ are well defined for $f,g \in D$. Analogously to the operators for jump processes, we find
\begin{align*}
A^g f(\eta) & =  \sum_\Lambda \int c_\Lambda(\eta,\dd \zeta) e^{g(\eta^\zeta) - g(\eta)} \left[f(\eta^\zeta) - f(\eta)\right], \\
H f(\eta) & = \sum_\Lambda \int c_\Lambda(\eta,\dd \zeta) \left[ e^{g(\eta^\zeta) - g(\eta)} -1 \right].
\end{align*}

Motivated by \eqref{assumption:Liggett3.3_corollary}, our goal is to equip $D$ with a topology $\tau_D$ based on the semi-norm $\tn{\cdot}$. Note that $\tn{\bONE} = 0$, so $\tn{\cdot}$ alone can not define a Hausdorff topology. Only the constants, however, have this property. Thus, let $\tau_D$ be the topology induced by $\vn{\cdot}_D := \tn{\cdot} + \vn{\cdot}$.

\begin{theorem} \label{theorem:IPS_D_satisfies_conditions}
Let $A$ satisfy the conditions of Theorem I.3.9 in \cite{Li85}, amongst those including \eqref{assumption:Liggett3.3}.

Then $(D,\vn{\cdot}_D)$ satisfies Conditions \ref{condition:D_algebra_closed_smooth} and \ref{condition:topology_on_D}. As a consequence, Theorems \ref{The:LDP1} and \ref{The:LDP2} hold for i.i.d. copies of interacting particle processes with generator $A$.
\end{theorem}


\begin{proof}
A straightforward argument, using the density of local functions establishes the separability of $(D,\tau_D)$, implying Condition \ref{condition:topology_on_D} (a). (b) is immediate by the definition of $\tau_D$.  Conditions \ref{condition:D_algebra_closed_smooth} and \ref{condition:topology_on_D} (c) follows from a number of straightforward calculations using the semi-norm $\tn{\cdot}$.

Condition \ref{condition:topology_on_D} (d) was obtained in \eqref{assumption:Liggett3.3_corollary}. For (e), fix $f \in D$, then the function $\alpha \mapsto e^\alpha$ defined on $[-2\vn{f}_Q,2\vn{f}_Q]$ is Lipschitz continuous, with Lipschitz constant $e^{2\vn{f}_Q}$. This means that $|e^\alpha - 1 | \leq |\alpha|  e^{2\vn{f}_Q}$ on the appropriate domain. Applying this to $\vn{Hf}$, we obtain
\begin{equation*}
\vn{Hf} \leq e^{2\vn{f}_Q} \sum_\Lambda \left|\int c_\Lambda(\eta, \dd \zeta) \left[ f(\eta^\zeta) - f(\eta)\right] \right| \leq e^{\tn{f}} \tn{f} \sup_x \sum_{\Lambda \ni x} c_\Lambda.
\end{equation*}
It follows $\cN = \{f \in D \, | \, \tn{f} \leq 1\}$ satisfies the condition for (e).
\end{proof}

We proceed with a short discussion on giving a respresentation for $D'$ in terms of set functions. Because we can always choose $\cN$ in Condition \ref{condition:topology_on_D} such that it contains all constant functions, we can restrict our attention to $(D/\cC)'$, where $\cC$ is the space of constant functions. 

\smallskip

We introduce some notation. For $\Lambda \subseteq S$, let $\cE_\Lambda : = \sigma(\eta_x \, | \, x \in \Lambda)$. Furthermore, $\tilde{\Pi}$ is the space of additive set functions $\alpha$ on the algebra $\cE_a := \bigcup_{\Lambda : |\Lambda| < \infty}  \cE_\Lambda$, for which it holds that $\alpha(E) = 0$. Note that the $\sigma$-algebra $\cE$ is given by $\sigma(\cE_a)$.

For $\alpha \in \tilde{\Pi}$ and a finite subset $\Lambda \subseteq S$, we denote the restriction of $\alpha$ to $\cE_\Lambda$ by $P_\Lambda \alpha$ and we set $P_x := P_{\{x\}}$. Also, we define the function $\vn{\alpha}_\Pi = \sup_x \vn{P_x \alpha}_{TV}$ taking values in $[0,\infty]$.
\begin{definition}
Let $\Pi$ be the set
\begin{equation*}
\Pi := \left\{\alpha \in \tilde{\Pi} \, \middle| \, \vn{\alpha}_\Pi < \infty \right\}.
\end{equation*}
\end{definition}

It follows that $\Pi$ is a vector space and that $\vn{\cdot}_\Pi$ is a norm on $\Pi$. We have the following results on $(D/\cC)'$ and $\Pi$, the proof of which is tedious, but straightforward.

\begin{proposition}\label{proposition:Representationtheorem}
$(\Pi,\vn{\cdot}_\Pi)$ is a Banach space and we have that $((D/\cC)', \tn{\cdot})$ is isometrically isomorphic to $(\Pi, \frac{1}{2} \vn{\cdot}_\Pi)$.
Additionally, we have $\vn{P_\Lambda \alpha}_{TV} \leq |\Lambda| \vn{\alpha}_\Pi$ for all finite subsets $\Lambda \subseteq S$.
\end{proposition}

\section{Appendix: The large deviation principle via Sanov's theorem} \label{section:LDP_on_path_space}

In this Appendix, we sketch how to prove Theorem \ref{The:LDP1}. In this setting, we let $(E,d)$ be a complete separable metric space. We prove the large deviation principle for a general class of processes via Sanov's theorem and the contraction principle. A similar approach has been taken in Lemma 4.6 of \cite{DG87}. More care needs to be taken as the contraction map $\phi$ defined by $\phi : \cP(D_E(\bR^+)) \rightarrow D_{\cP(E)}(\bR^+)$ is not continuous, whereas in the context of $C_E(\bR^+)$ it is. 

\smallskip

Define for every $t$ the measurable maps $\pi_t, \pi_{t-} : D_E(\bR^+) \rightarrow E$ by $\pi_t (x) := x(t)$ and $\pi_{t-}(x) = x(t-)$, see e.g. Proposition III.7.1 in \cite{EK86}.

Let $\PR$ be a probability measure on $D_E(\bR^+)$, and let $X = (X(t))_{t \geq 0}$ be the process with law $\PR$. Define $\mu(t) = \PR \circ \pi_t^{-1}$ and $\mu(t-) = \PR \circ \pi_{t-}^{-1}$ the laws of $X(t)$ and $X(t-)$. Also define the map $\phi : \mathcal{P}(D_E(\bR^+)) \rightarrow \mathcal{P}(E)^{\bR^+}$ by setting $\phi(\PR) = (\mu(t))_{t \geq 0}$ and finally define the maps $\phi_t : \mathcal{P}(D_E(\bR^+)) \rightarrow \mathcal{P}(E)$ by setting $\phi_t(\PR) = \mu(t)$. 

As mentioned above, the map $\phi$ is not continuous. Discontinuity problems can be avoided by additionally assuming that the process is continuous in probability.

\begin{proposition} \label{proposition:phiiscontinuous}
$\phi : \mathcal{P}(D_E(\bR^+)) \rightarrow D_{\mathcal{P}(E)}(\bR^+)$ is measurable and, additionally, continuous at measures $\PR$ for which it holds that for every $t > 0$: $\PR[X(t) = X(t-)] = 1$.
\end{proposition}
A similar statement for the finite dimensional projections $\phi_t$, can be found in \cite[Theorem 3.7.8]{EK86}.

To prove measurability of $\phi$, we first prove a useful lemma.

\begin{lemma} \label{lemma:pushforward_is_measurable}
Let $E,F$ be Polish spaces and let $\xi : E \rightarrow F$ be measurable. Then the map $\xi_* : \cP(E) \rightarrow \cP(F)$ defined by $\xi_*\mu = \mu \circ \xi^{-1}$ is measurable for the Borel $\sigma$-algebras with respect to the weak topology on $\cP(E)$ and $\cP(F)$.
\end{lemma}

\begin{proof}
Let $H$ be a Polish space, e.g. $E$ or $F$. We start by proving that the Borel $\sigma$-algebra $\cB_w$ for the weak topology on $\cP(H)$ equals the Borel $\sigma$-algebra $\cB_{TV}$ for the total variation norm on $\cP(H)$. As the total variation topology is finer than the weak topology, we find $\cB_w \subseteq \cB_{TV}$. As the total variation norm is lower semi-continuous with respect to the weak topology, cf. Theorem 7.9.1 in \cite{Bo07} which identifies the total variation topology as the strong topology corresponding on $\cM(H)$ induced by $C_b(H)$, we find that $\cB_{TV} \subseteq \cB_w$.

\smallskip

The result follows immediately from this identification as the map $\xi_*$ is continuous for the total variation topologies.
\end{proof}

\begin{proof}[Proof of Proposition \ref{proposition:phiiscontinuous}]
To prove the measurability of $\phi$, Proposition 3.7.1 in \cite{EK86} implies that the Borel $\sigma$-algebra on $D_{\cP(E)}(\bR^+)$ is generated by the inverse images of Borel sets in $\bR$ of a collection of maps $\iota^f_s : D_E(\bR^+) \rightarrow \bR$ defined by $\iota^f_s(\{\mu(t)\}_{t \geq 0}) = \int f \dd \mu(s)$ for all $f \in C_b(E)$ and $s \in \bR^+$. Thus, it suffices to prove that $\iota_s^f \circ \phi$ is measurable for all $s$ and $f$.

We can write $\iota_s^f \circ \phi$ as $\iota^f \circ (\pi_s)_*$, where $\pi_t : D_E(\bR^+) \rightarrow E$ was defined as $\pi_t(x) = x(t)$ and where $\iota^f : \cP(E) \rightarrow \bR$ is defined as $\iota^f(\mu) = \int f \dd \mu$. By Lemma \ref{lemma:pushforward_is_measurable} $(\pi_t)_*$ is measurable, and by definition $\iota^f$ is continuous. We conclude that $\iota_s^f \circ \phi= \iota^f \circ (\pi_s)_*$ is measurable for all $s \geq 0$ and $f \in C_b(E)$. We conclude that $\phi$ is measurable.
	
\smallskip
	
We proceed with the proof of continuity. Let $\PR^n, \PR \in \mathcal{P}(D_E(\bR^+))$ such that $\PR^n \rightarrow \PR$ weakly and $\PR$ such that for every $t$ $\PR[X(t) = X(t-)] = 1$. By the Skorokhod representation Theorem \cite[Theorem 3.1.9]{EK86}, we can find a probability space $\left(\Omega, \mathcal{F}, P\right)$ and $D_E(\bR^+)$ valued random variables $Y^n, Y$ distributed as $X^n$ and $X$ under $\PR^n, \PR$ such that $Y^n \rightarrow Y \; P$ a.s. 

Let $\left\{ t_n\right\}_{n \geq 0}$ be a sequence converging to $t > 0$. Define the sets
\begin{equation*}
A := \left\{Y(t) = Y(t-) \right\}, \quad \, B := \left\{d(Y^n(t_n),Y(t)) \wedge d(Y^n(t_n),Y(t-)) \rightarrow 0 \right\}.
\end{equation*}
By the assumption that $\PR[X(t) = X(t-)] = 1$, it follows that $P[A] = 1$. By Proposition 3.6.5 in \cite{EK86}, and the fact that $Y^n \rightarrow Y \; P$ a.s. it follows that $P[B] = 1$. Combining these statements yields $P\left[Y^n(t_n) \rightarrow Y(t)\right] \geq P[A \cap B] = 1$, which implies that $\mu^n(t_n) \rightarrow \mu(t)$. As $\mu(t) = \mu(t-)$ by assumption, Proposition 3.6.5 in Ethier and Kurtz yields the final result.
\end{proof}

\subsection{Large deviations for measures on the Skorokhod space}

Suppose that we have a process $X$ on $D_E(\bR^+)$ and a corresponding measure $\PR \in \mathcal{P}(D_E(\bR^+))$. Then Sanov's theorem, Theorem 6.2.10 in \cite{DZ98}, gives us the large deviation behaviour of the empirical distribution $L_n^X$ of independent copies of the process $X$ $X^1,X^2,\dots$:
\begin{equation*}
L_n^X := \frac{1}{n} \sum_{i=1}^n \delta_{\{X^i\}} \in \mathcal{P}(D_E(\bR^+))
\end{equation*}
with as the rate function the relative entropy defined in \eqref{definition:relative_entropy}. As a consequence of the contraction principle and Proposition \ref{proposition:phiiscontinuous}, we obtain the following result.

\begin{theorem} \label{The:contract}
Suppose that $\PR$ satisfies $\PR[X(t) = X(t-)] = 1$ for every $t \geq 0$, then the large deviation principle holds for 
\begin{equation*}
\left(L_n^{X(t)}\right)_{t \geq 0} = \left( \frac{1}{n} \sum_{i=1}^n \delta_{X^i(t)} \right)_{t \geq 0}
\end{equation*}
on $D_{\mathcal{P}(E)}(\bR^+)$ with rate function
\begin{equation*}
I((\nu_t)_{t \geq 0}) = \inf \{H(\mathbb{Q} \, | \, \PR) \, | \, \mathbb{Q} \in \mathcal{P}(D_E(\bR^+)), \phi(\mathbb{Q}) = (\nu(t))_{t\geq0} \}
\end{equation*}
and $I$ is finite only on $C_{\cP(E)}(\bR^+)$.
\end{theorem}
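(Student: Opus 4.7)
The plan is to combine Sanov's theorem (Theorem \ref{theorem:Sanov}) with a contraction argument based on the partial continuity of $\phi$ from Proposition \ref{proposition:phiiscontinuous}. Since $L_n^{X(t)} = \phi_t(L_n^X)$, the whole trajectory $\left(L_n^{X(t)}\right)_{t \geq 0}$ equals $\phi(L_n^X)$, so the task is to push the Sanov LDP forward through $\phi$. The map $\phi$ is not globally continuous, so the standard contraction principle does not apply directly. The key observation that drives everything is: whenever $H(\mathbb{Q}\,|\,\PR) < \infty$ we have $\mathbb{Q} \ll \PR$, and hence $\mathbb{Q}[X(t)=X(t-)]=1$ for every $t \geq 0$ by the hypothesis on $\PR$; by Proposition \ref{proposition:phiiscontinuous} this places $\mathbb{Q}$ in the continuity set of $\phi$. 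Thus the finite-entropy measures automatically sit inside the good set of $\phi$.

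For the upper bound on a closed set $B \subseteq D_{\cP(E)}(\bR^+)$, I would start from
\begin{equation*}
P[L_n^{X(t)} \in B] = P[L_n^X \in \phi^{-1}(B)] \leq P\bigl[L_n^X \in \overline{\phi^{-1}(B)}\bigr]
\end{equation*}
and apply Sanov to the closed set $\overline{\phi^{-1}(B)}$. To identify the resulting infimum with $\inf_{\nu \in B} I(\nu)$, I would show that the finite-entropy parts of $\overline{\phi^{-1}(B)}$ and $\phi^{-1}(B)$ agree: a convergent sequence $\mathbb{Q}_n \in \phi^{-1}(B)$ with limit $\mathbb{Q}_0$ of finite entropy lies near a continuity point of $\phi$, so $\phi(\mathbb{Q}_n) \to \phi(\mathbb{Q}_0)$, and closedness of $B$ forces $\phi(\mathbb{Q}_0) \in B$.

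For the lower bound on an open set $A$, given any $\mathbb{Q}_0 \in \phi^{-1}(A)$ with $H(\mathbb{Q}_0\,|\,\PR) < \infty$, continuity of $\phi$ at $\mathbb{Q}_0$ produces an open neighbourhood $U \ni \mathbb{Q}_0$ with $\phi(U) \subseteq A$. Applying the Sanov lower bound to $U$ gives $\liminf_n \tfrac{1}{n}\log P[L_n^{X(t)} \in A] \geq -H(\mathbb{Q}_0\,|\,\PR)$, and optimising over $\mathbb{Q}_0$ yields the desired bound. Goodness of $I$ will follow from $\{I \leq c\} = \phi(\{H(\cdot\,|\,\PR) \leq c\})$: the set $\{H \leq c\}$ is compact by Sanov, lies in the continuity set by the key observation, and $\phi$ is continuous on it, so its image is compact. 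The fact that $I$ is finite only on $C_{\cP(E)}(\bR^+)$ follows because any optimiser $\mathbb{Q}$ is absolutely continuous with respect to $\PR$, so all its time-marginals satisfy $\mu(t) = \mu(t-)$ and hence $\phi(\mathbb{Q})$ has no jumps.

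The main obstacle is the failure of global continuity of $\phi$, which is handled uniformly once one exploits that finite relative entropy transfers the $\PR$-a.s. no-fixed-jump property to $\mathbb{Q}$. Given that step, both bounds reduce to routine applications of Sanov together with the fact that closed (resp.\ open) neighbourhoods in $\cP(D_E(\bR^+))$ interact well with $\phi$ at continuity points.
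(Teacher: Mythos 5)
Your proposal is correct and matches the paper's approach: both proofs hinge on the observation that finite relative entropy forces $\mathbb{Q} \ll \PR$, hence $\mathbb{Q}[X(t)=X(t-)]=1$ for all $t$, which places $\mathbb{Q}$ in the continuity set of $\phi$ established in Proposition \ref{proposition:phiiscontinuous}. The only cosmetic difference is that you unroll the extended contraction principle (upper/lower bounds and goodness) by hand, whereas the paper simply cites Theorem 4.2.1 and remark (c) in Dembo--Zeitouni, which packages exactly this argument.
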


\begin{proof}
The measures $\mathbb{Q}$ for which it holds that $I(\mathbb{Q}) < \infty$ satisfy $\mathbb{Q} << \PR$ hence it follows that for every $t$: $\mathbb{Q}[X(t) = X(t-)] = 1$. This yields that $\phi$ is continuous at $\mathbb{Q}$ by Proposition \ref{proposition:phiiscontinuous}. 

By the contraction principle, Theorem 4.2.1 and remark (c) after Theorem 4.2.1 in \cite{DZ98}, we obtain the large deviation principle on $D_{\mathcal{P}(E)}(\bR^+)$ with $I$ as given in the theorem.
\end{proof}

\subsection{The large deviation principle for Markov processes} \label{subsec:LDPforMarkov}

Although Theorem \ref{The:contract} can be applied to a wide range of (time-inhomogeneous) processes, we explore its consequences for time-homogeneous Markov processes. 

\begin{lemma} \label{lemma:ldpholdsonskorokhodmarkov} 
Suppose that the process $X$ with corresponding measure $\PR$ on $D_E(\bR^+)$ solves the martingale problem for $(A,\cD(A))$ with starting measure $\PR_0$. Then, it holds that for every $t \geq 0$ $\PR[X(t) = X(t-)] = 1$. Hence, the large deviation principle holds for $\{L_n^{X(t)} \}_{t \geq 0}$ on $D_{\mathcal{P}(E)}(\bR^+)$ with rate function
\begin{equation*}
I((\nu_t)_{t \geq 0}) = \inf \{H(\mathbb{Q} \, | \, \PR) \, | \, \mathbb{Q} \in \mathcal{P}(D_E(\bR^+)), \phi(\mathbb{Q}) = (\nu(t))_{t\geq0} \}
\end{equation*}
and $I$ is finite only on $C_{\cP(E)}(\bR^+)$.
\end{lemma}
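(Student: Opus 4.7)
The statement bundles two claims: (i) $\PR[X(t)=X(t-)]=1$ for every $t\geq 0$, and (ii) the LDP with the stated rate function. Given (i), (ii) is an immediate application of Theorem \ref{The:contract} to $\PR$, so the entire content of the lemma lies in (i). I will establish (i) by combining the martingale property with the algebra structure of the core that underlies the rest of the paper (Condition \ref{condition:D_algebra_closed_smooth}).

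Fix $t>0$ and $f \in D$. By hypothesis
\begin{equation*}
M_f(s) := f(X(s)) - f(X(0)) - \int_0^s A f(X(u))\dd u
\end{equation*}
is a c\`adl\`ag martingale, and since $f, Af \in C_b(E)$ it is uniformly bounded on $[0,t]$. The pathwise continuity of the Lebesgue integral gives $M_f(t) - M_f(t-) = f(X(t)) - f(X(t-))$. Choosing deterministic $t_n \uparrow t$, the c\`adl\`ag property yields $M_f(t_n) \to M_f(t-)$ almost surely, and the uniform bound provides uniform integrability, so the martingale convergence theorem along $\cF_{t_n}\uparrow\cF_{t-}$ gives $\bE[M_f(t)\mid\cF_{t-}] = \lim_n M_f(t_n) = M_f(t-)$. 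Translating back, $\bE[f(X(t)) \mid \cF_{t-}] = f(X(t-))$.

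Next I would apply exactly the same argument to $f^2 \in D$ (which lies in $D \subseteq \cD(A)$ by the algebra property) to obtain $\bE[f^2(X(t)) \mid \cF_{t-}] = f^2(X(t-))$. Combining the two identities gives the conditional variance bound
\begin{equation*}
\bE\!\left[(f(X(t)) - f(X(t-)))^2 \,\middle|\, \cF_{t-}\right] = \bE[f^2(X(t))\mid\cF_{t-}] - f(X(t-))^2 = 0,
\end{equation*}
so $f(X(t)) = f(X(t-))$ $\PR$-a.s.\ for every $f\in D$. Since $E$ is separable, the Lindel\"of property applied to the open cover $\{(x,y)\in E\times E : f(x)\neq f(y)\}_{f\in D}$ of $E\times E \setminus \Delta$ produces a countable point-separating subfamily $\{f_n\}\subseteq D$; intersecting the corresponding countably many null sets yields $X(t)=X(t-)$ $\PR$-a.s. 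Claim (ii) then follows from Theorem \ref{The:contract}.

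The main obstacle I anticipate is precisely the passage from the first-moment identity $\bE[f(X(t))]=\bE[f(X(t-))]$, which only gives the distributional equality $\mu(t)=\mu(t-)$, to the pathwise statement $X(t)=X(t-)$ $\PR$-a.s.\ that Proposition \ref{proposition:phiiscontinuous} requires. The algebra property of $D$ closes this gap through the variance computation above; in its absence, one would need a substitute such as It\^o's formula or a direct analysis of the Doob--Meyer decomposition of $M_f^2$.
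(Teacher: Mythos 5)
The paper's own proof of this lemma is a one-line citation: the claim $\PR[X(t)=X(t-)]=1$ "follows by Theorem 4.3.12 in \cite{EK86}", and then Theorem~\ref{The:contract} applies. You reconstruct an explicit argument, which is valuable, and your reduction of the lemma to claim (i) and your Lindel\"of argument at the end are both correct. Your derivation of $\bE[f(X(t))\mid\cF_{t-}]=f(X(t-))$ via uniform integrability and L\'evy's upward theorem is also correct (noting, as you implicitly do, that $\int_0^{t}Af(X(u))\,\dd u = \int_0^{t-}Af(X(u))\,\dd u$ is $\cF_{t-}$-measurable).

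The gap is in the hypotheses you invoke. Lemma~\ref{lemma:ldpholdsonskorokhodmarkov} and Theorem~\ref{The:LDP1} assume only that $X$ solves the martingale problem for $(A,\cD(A))$ with a point-separating domain; Condition~\ref{condition:D_algebra_closed_smooth}, and with it the existence of an algebra core $D$ with $f^2 \in D \subseteq \cD(A)$, is not introduced until Theorem~\ref{The:LDP2}. Your step "apply the same argument to $f^2 \in D$" therefore uses structure that the lemma does not have. So as written, you prove a weaker statement under extra assumptions than what the lemma asserts — which matters, since the lemma is used to establish Theorem~\ref{The:LDP1}, where the algebra condition is not available.

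The step can be repaired without the algebra hypothesis. From the martingale relation and dominated convergence one gets $\bE[g(X(t))] = \bE[g(X(t-))]$ for all $g\in\cD(A)$, hence $\mu(t)=\mu(t-)$ as measures (granting that $\cD(A)$ is measure-determining, which is what \cite{EK86} actually means by "separating"). Then Jensen's inequality gives $\bE[f^2(X(t))\mid\cF_{t-}] \ge f^2(X(t-))$, while $\bE[f^2(X(t))]=\bE[f^2(X(t-))]$ because $f^2\in C_b(E)$ and $\mu(t)=\mu(t-)$; since a nonnegative random variable with zero expectation vanishes a.s., the conditional inequality is an a.s.\ equality. This recovers your variance identity
\begin{equation*}
\bE\!\left[(f(X(t))-f(X(t-)))^2 \,\middle|\, \cF_{t-}\right] = 0
\end{equation*}
for every $f\in\cD(A)$ without needing $f^2\in\cD(A)$, and the rest of your proof then goes through. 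This is essentially the content of the cited Ethier--Kurtz result; the paper simply outsources it.
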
 

\begin{proof}
To apply Theorem \ref{The:contract}, we need to check that $\PR[X(t) = X(t-)] = 1$ for every $t \geq 0$, but this follows by Theorem 4.3.12 in \cite{EK86}.
\end{proof}

Using this result, Theorem \ref{The:LDP1} follows without much effort.

\begin{proof}[Proof of Theorem \ref{The:LDP1}]
The large deviation principle follows by Lemma \ref{lemma:ldpholdsonskorokhodmarkov}. This lemma also gives that the rate function is $\infty$ on the complement of $C_{\mathcal{P}(E)}(\bR^+)$. 

Next, we consider the form of the rate function. By Sanov's theorem and the contraction principle, the vector $(L_n^{X(0)},L_n^{X(t_1)},\dots,L_n^{X(t_k)})$ satisfies the large deviation principle on $\cP(E)^{k+1}$ with some rate function $I[0,t_1,\dots,t_k]$.  The representation
\begin{align*}
I[t_0,\dots, t_k](\nu(0),\dots, \nu(t_k)) & = H(\nu(0) \, | \mu(0)) + \sum_{i=1}^k I_{t_i- t_{i-1}}(\nu(t_i) \, | \, \nu(t_{i-1})), \\
I_t(\pi_1 \, | \, \pi_2) & = \sup_f \ip{f}{\pi_1} - \ip{V(t)f}{\pi_2},
\end{align*}
is proven for diffusion processes in Theorem 3.5 and Lemma 4.7 in \cite{DG87}. The proofs of these results only use the Feller property of the transition kernels, and can thus be generalized without any problems to this more general context.

To obtain the rate function $I$ as a supremum over rate functions for  $I[0,t_1,\dots,t_k]$ for finite dimensional problems
\begin{equation*}
I(\nu) = \begin{cases}
\sup_{0,t_1,\dots,t_k} I[0,t_1,\dots,t_k](\nu(0),\nu(t_1)\dots,\nu(t_k)) & \text{if } \nu \in C_{\mathcal{P}(E)}(\bR^+), \\
\infty & \text{otherwise},
\end{cases}
\end{equation*}
we use Theorem 4.13 and Theorem 4.30 in Feng and Kurtz\cite{FK06}. 
\end{proof}

\section{Appendix: Souslin spaces, barrelled spaces, and Gelfand integration} \label{section:FAappendix}

\subsection{Barrelled spaces and Gelfand integration}

Barrelled spaces, introduced in the paragraph following Equation \eqref{eqn:polar_def} on page \pageref{eqn:polar_def}, are of importance because they allow for integration theory on the dual of the space.

\smallskip

Let $(\Omega, \cF, \mu)$ be a complete and finite measure space, and let $\cX$ be a barrelled space with continuous dual $\cX'$. We equip $X'$ with $\sigma(\cX',\cX)$, the weak* topology.
\begin{definition}
A function $f : \Omega \rightarrow \cX'$ is called weak* measurable if the scalar function $\omega \mapsto \ip{x}{f(\omega)}$ is $\cF$ measurable for every $x \in \cX$. Such a function $f$ is called \textit{Gelfand} or weak* integrable if $\ip{x}{f} \in \cL^1(\Omega,\cF,\mu)$ for every $x \in \cX$.
\end{definition}

For Gelfand integrable functions, we obtain from \cite[pages 52-53]{DU77} combined with the closed graph theorem in \cite[Proposition 7.1.11]{PCB87} or \cite[Theorem VI.7]{RR73}, the following result.

\begin{theorem} \label{theorem:Gelfandrepresentation}
Let $\cX$ be a barrelled space and $(\Omega, \cF, \mu)$ a complete and finite measure space. For every measurable set $A \in \cF$ and Gelfand integrable function $f : \Omega \rightarrow \cX'$, there exists a unique $x_A' \in \cX'$ such that
\begin{equation*}
\ip{x}{x_A'} = \int_A \ip{x}{f(\omega)} \mu(\dd \omega)
\end{equation*}
for all $x \in \cX$. This element $x_A'$ will be denoted by $\int_A f \dd \mu$.
\end{theorem}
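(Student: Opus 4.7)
The plan is to reduce the construction of $x_A'$ to an application of the closed graph theorem for barrelled spaces (quoted just above the statement), via an auxiliary linear map into $L^1(\mu)$.

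First I would define the linear operator
\[
T : \cX \to L^1(\Omega,\cF,\mu), \qquad (Tx)(\omega) := \ip{x}{f(\omega)}.
\]
This map is well-defined and linear: well-defined because Gelfand integrability of $f$ says exactly that $\omega \mapsto \ip{x}{f(\omega)}$ belongs to $L^1(\mu)$ for every $x \in \cX$, and linear because each $f(\omega)$ is linear. Since $L^1(\mu)$ is a Banach space, hence Fr\'echet, and $\cX$ is barrelled by hypothesis, the closed graph theorem will yield the continuity of $T$ as soon as I check that its graph is closed.

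Next I would verify closedness of the graph. Suppose $x_n \to x$ in $\cX$ and $Tx_n \to g$ in $L^1(\mu)$. Passing to a subsequence I may assume $Tx_{n_k}(\omega) \to g(\omega)$ for $\mu$-a.e.\ $\omega$. On the other hand, for every fixed $\omega$ the functional $f(\omega)$ is continuous on $\cX$, so
\[
Tx_{n_k}(\omega) = \ip{x_{n_k}}{f(\omega)} \longrightarrow \ip{x}{f(\omega)} = (Tx)(\omega).
\]
Hence $g = Tx$ in $L^1(\mu)$, so the graph is closed and $T$ is continuous.

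With $T$ in hand I would, for each $A \in \cF$, define
\[
x_A' : \cX \to \bR, \qquad x_A'(x) := \int_A (Tx)(\omega)\,\mu(\dd \omega) = \int_A \ip{x}{f(\omega)}\,\mu(\dd \omega).
\]
Linearity is immediate, and continuity follows because $x_A'$ is the composition of the continuous operator $T$ with the continuous linear functional $g \mapsto \int_A g\,\dd\mu$ on $L^1(\mu)$ (well-defined since $\mu$ is finite, hence $\bONE_A \in L^\infty(\mu)$). Thus $x_A' \in \cX'$ and satisfies the displayed identity by construction. Uniqueness is trivial: if $y_A' \in \cX'$ satisfies $\ip{x}{y_A'} = \ip{x}{x_A'}$ for all $x \in \cX$, then $y_A' - x_A'$ vanishes on $\cX$, so $y_A' = x_A'$ as elements of $\cX'$ (this is just the definition of the dual pairing $\langle\cdot,\cdot\rangle$ separating points of $\cX'$).

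The only genuinely nontrivial step is the continuity of $T$, and it is there that the barrelled hypothesis on $\cX$ is used, via the closed graph theorem; once continuity is in hand, integrating against $\bONE_A$ in $L^1(\mu)$ turns it into the desired continuous linear functional on $\cX$.
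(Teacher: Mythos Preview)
Your argument is correct and is exactly the standard proof from Diestel--Uhl (pp.~52--53) that the paper cites in lieu of giving its own proof: define $T:\cX\to L^1(\mu)$ by $Tx=\ip{x}{f(\cdot)}$, show its graph is closed, invoke the closed graph theorem for barrelled domains, and then compose with the bounded functional $g\mapsto\int_A g\,\dd\mu$.

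One small remark: you appeal to ``the closed graph theorem quoted just above the statement,'' but note that the version written in the paper has the map going $F\to\cX$ (Fr\'echet to barrelled), whereas your $T$ goes $\cX\to L^1(\mu)$ (barrelled to Fr\'echet). The direction you actually need---barrelled domain, Fr\'echet (or Pt\'ak) codomain---is the standard form of the closed graph theorem for barrelled spaces and is the one used in Diestel--Uhl; the paper's formulation appears to have the arrow reversed. This does not affect the validity of your proof, only the literal cross-reference.
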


\subsection{Souslin spaces}

\begin{definition}
A space $(Y,\tau_Y)$ is called Souslin, if $Y = f(X)$ for some complete separable metric space $(X,\tau_X)$ and some continuous function $f : (X,\tau_X) \rightarrow (Y,\tau_y)$.
\end{definition}

For more background on Souslin spaces, see Chapters 6 and 7 in \cite{Bo07}.

\begin{lemma} \label{lemma:UisSouslin} 
Let $(X,\tau)$ be a separable barrelled locally convex space and $T$ a barrel in $(X,\tau)$. Then $(\bigcup_n n T^\circ, wk^*) \subseteq (X',wk^*)$ is a Souslin space.
\end{lemma}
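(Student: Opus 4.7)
The goal is to exhibit $\bigcup_n nT^\circ$ as a countable union of compact metrizable subsets of $(X',wk^*)$, and then invoke the standard fact that a countable union of Souslin subspaces of a Hausdorff space is Souslin.

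First I would use the barrelled hypothesis: since $T$ is a barrel and $(X,\tau)$ is barrelled, $T$ is a neighbourhood of $0$. By the Bourbaki--Alaoglu theorem, its polar $T^\circ$ is weak$^*$ compact in $X'$. More generally, for every $n \geq 1$, the set $nT^\circ = (n^{-1}T)^\circ$ is the polar of the neighbourhood $n^{-1}T$ of $0$, hence also weak$^*$ compact.

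Next I would use separability of $(X,\tau)$ to upgrade compactness to compact metrizability. The standard fact here is that if $X$ is separable, then the weak$^*$ topology restricted to any equicontinuous subset of $X'$ is metrizable: if $\{x_k\}_{k \in \bN}$ is a countable dense subset of $X$, one checks that the pseudometric $d(u,v) := \sum_k 2^{-k}(|\ip{x_k}{u-v}| \wedge 1)$ induces the weak$^*$ topology on each equicontinuous set (Hausdorffness comes from denseness plus the Hahn--Banach separation of distinct points of $X'$). Applied to $nT^\circ$, this gives that each $nT^\circ$ is a compact metrizable, hence Polish, hence Souslin subspace of $(X',wk^*)$.

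Finally, I would assemble the countable union. Form the topological disjoint sum $Y := \bigsqcup_n nT^\circ$, which is Polish as a countable disjoint union of Polish spaces. The natural map $Y \to (X',wk^*)$ sending each copy of $nT^\circ$ by its inclusion is continuous, and its image is exactly $\bigcup_n nT^\circ$. Since $(X',wk^*)$ is Hausdorff, this image, equipped with the subspace topology, is a continuous Hausdorff image of a Polish space and is therefore Souslin. (This is the content of Lemma 6.6.5/6.6.7 in Bogachev.) The only slightly subtle point is that the definition of Souslin requires the image topology to agree with the subspace topology, which is immediate here because the maps $nT^\circ \hookrightarrow (X',wk^*)$ are continuous and $(X',wk^*)$ is Hausdorff.

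The main obstacle, as far as there is one, is verifying the weak$^*$ metrizability of $nT^\circ$ from separability of $(X,\tau)$ — this requires one to check that the pseudometric above both is continuous and separates points on the equicontinuous set $nT^\circ$, using the fact that $T^\circ$ being equicontinuous means its elements are uniformly controlled on a neighbourhood of $0$ generated by finitely many seminorms, so density in $\tau$ suffices to recover the weak$^*$ topology on $T^\circ$. After that step everything is standard.
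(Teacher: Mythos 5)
Your proof is correct and follows essentially the same route as the paper's: barrelledness gives that $T$ is a neighbourhood of $0$, Bourbaki--Alaoglu gives weak$^*$ compactness of each $nT^\circ$, separability of $X$ gives metrizability, and the countable union of these Souslin (in fact Polish) pieces is Souslin by Bogachev. The only difference is expository: where the paper simply cites K\"othe 39.4.(7) for the metrizability of an equicontinuous weak$^*$-compact set, you spell out the pseudometric argument explicitly. In fact your argument can be streamlined slightly once compactness is in hand: a weak$^*$-continuous metric $d$ on the weak$^*$-compact set $nT^\circ$ that separates points makes the identity $(nT^\circ, wk^*) \to (nT^\circ, d)$ a continuous bijection from a compact space to a Hausdorff space, hence a homeomorphism, so the appeal to equicontinuity in your final paragraph is not really needed for that step.
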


In particular, as the unit ball in a Banach space $B$ is a barrel, the dual $(B',wk^*)$ of separable Banach space is Souslin.

\begin{proof}
As $(X,\tau)$ is barrelled, $T$ is a neighbourhood of $0$. Consequentially, $T^\circ$ is an equi-continuous set in $(X^*,wk^*)$ by 21.3.(1) in K\"{o}the \cite{Ko69}. By the Bourbaki-Alaoglu theorem, 20.9.(4) \cite{Ko69}, this set is weak* compact. 

Furthermore, by 39.4.(7) in \cite{Ko79}, $T^\circ$ is metrisable. $(T^\circ,wk^*)$ is compact and metric, which implies that it is complete separable metric and as a consequence Souslin. We can do the same for $n \cN^\circ$ for every $n \in \bN$, so we obtain that $(\bigcup_n n \cN^\circ,wk^*)$ is Souslin \cite[Theorem 6.6.6]{Bo07}. 
\end{proof}

\smallskip

\textbf{Acknowledgement}

The author thanks Frank Redig and Jan van Neerven for reading the manuscript and numerous valuable comments. The author also thanks anonymous referees for pointing out a mistake and suggestions on earlier versions of the paper.


\bibliographystyle{plain} 
\bibliography{../KraaijBib}{}

\end{document}